\documentclass[a4paper,11pt]{article}
\usepackage{amsfonts}
\usepackage{bbm}
\usepackage{mathrsfs}
\usepackage{amsmath,amsthm,amssymb,amscd}
\usepackage[center]{titlesec}
\setlength{\textwidth}{5in} \setlength{\textheight}{7.5in}
\setlength{\topmargin}{-0.13truein}
\newtheorem{theo}{Theorem}[section]
\newtheorem{prop}[theo]{Proposition}
\newtheorem{lem}[theo]{Lemma}
\newtheorem{rem}[theo]{Remark}
\newtheorem{col}[theo]{Corollary}
\newtheorem{defi}[theo]{Definition}
\newcommand{\bp}{\begin{proof}}
\newcommand{\ep}{\end{proof}}

\begin{document}
\setlength{\baselineskip}{13pt} \pagestyle{myheadings}
\title{{ On closed almost complex four manifolds}\thanks{ Supported by
 NSFC Grants 12171417(Hongyu Wang), 11971112 (Ken Wang).}}
\author{{\large Hongyu WANG \ \ \ \
 Ken WANG \ \ \ \
 Peng ZHU }\\
}
\date{}
\maketitle
\begin{quote}
\noindent {\bf Abstract.} {\small In this paper, by using weakly $\widetilde{\mathcal{D}}^+_J$ (resp. $\mathcal{D}^+_J$)-closed technique firstly introduced
by Tan,Wang, Zhou and Zhu, we will give
a characterization of tamed and weakened tamed four-manifolds, and an almost K\"{a}hler version of Nakai-Moishezon criterion for almost complex four-manifolds.}
\end{quote}
{Mathematics Subject Classification: 53D35; 53C56; 53C65.}\\
{Keywords: symplectic form, almost complex structure, irreducible $J$-curve, $\widetilde{\mathcal{D}}^+_J$ (resp. $\mathcal{D}^+_J$) operator}

\section{Introduction}
Suppose that $M$ is a closed, oriented smooth $4$-manifold and $\omega$ is a symplectic form on $M$ that is compatible with
the orientation. An endomorphism, $J$, of $TM$ is said to be an almost complex structure when $J^2=-\mathrm{id}_{TM}$. Such an almost
complex is said to be tamed by $\omega$ when the bilinear form $\omega(\cdot,J\cdot)$ is positive definite. The almost complex
structure $J$ is said to be compatible (or calibrate) with $\omega$ when this same bilinear form is also symmetric \cite{McSa}.
Gromov \cite{Gr} observed that tamed almost complex structures and also compatible structures always exist.

  Note that there are topological obstructions to the existence of almost complex structures on an even dimensional manifold.
  For a closed 4-manifold, a necessary condition is that $1-b^1+b^+$ be even \cite{McSa},
   where $b^1$ is the first Betti number and $b^+$ is the number of positive eigenvalues of
   the quadratic form on $H^2(M;\mathbb{R})$ defined by the cup product, hence the condition is either $b^1$ be even and $b^+$ odd,
    or $b^1$ be odd and $b^+$ even.
  It is a well-known fact (which is the Kodaira conjecture \cite{BaPeVa,MoKo}) that any closed complex surface with $b^1$ even is K\"{a}hler.
  The direct proofs have been given by N. Buchdahl \cite{Bu} and A. Lamari \cite{La1}.
  R. Harvey and H. B. Lawson, Jr. (Theorems 26 and 38 in \cite{HaLa}) have proved that for any closed complex surface $(M,J)$ with $b^1$ even,
   there exists a symplectic form $\omega$ on $M$ by which $J$ is tamed.
   S.K. Donaldson \cite{Do} posed the following question:\\

   If an almost complex structure on a closed $4$-manifold is tamed by a given symplectic form $\omega$,
must it be compatible with a new symplectic form?\\

Thus, Donaldson's question on tamed almost complex 4-manifolds (in particular, $b^+=1$) is related to the
   Kodaira conjecture for complex surfaces \cite{Do,DLZ1}.\\

On a closed almost complex $4$-manifold $(M,J)$, Li and Zhang \cite{LiZh} introduced subgroups $H_J^+$ and $H_J^-$ of the real degree 2
de Rham cohomology $H^2(M;\mathbb{R})$, as the set of cohomology classes which can be represented by $J$-invariant and $J$-anti-invariant $2$-real
forms. Let us denote by $h_J^+$ and $h_J^-$ the dimensions of $H_J^+$ and $H_J^-$, respectively. In \cite{DLZ1}, it was proved that for a closed almost
complex $4$-manifold $(M,J)$,
$$H^2(M;\mathbb{R})=H_J^+\oplus H_J^-.$$

Tan, Wang, Zhou and Zhu \cite[Theorem 1.1]{TaWaZhZh} solved Donaldson tameness question with the condition of $h_J^-=b^+-1$.
The approach is along the lines given by Buchdahl \cite{Bu} for a unified proof of the Kodaira conjecture:\\

{\it  Let $M$ be a closed symplectic 4-manifold with symplectic form $\omega$.
  Suppose that $J$ is an $\omega$-tamed almost complex structure on $M$ and $h^-_J=b^+-1$.
  Then there exists a new symplectic form $\Omega$ that is compatible with $J$.}\\

It is worth remarking that the proof of Theorem 1.1 in \cite{TaWaZhZh} is similar to the proof
 of Theorem 11 in \cite{Bu}. Without loss of generality, we can assume that
 $$\omega=F+d_J^-(v+\bar{v})$$
  is the $J$-taming symplectic form on the almost Hermitian $4$-manifold $(M,g,J,F)$, where
 $$F:=P_J^+\omega\in\Omega_{J}^{1,1}(M)$$
  is the fundamental $2$-form, $v\in\Omega_{J}^{0,1}(M)$, then
 $$\widetilde{\omega}=\omega-d(v+\bar{v})=F-d_J^+(v+\bar{v})$$
  is in $H_J^+\cap H_g^+$, where $H_g^+$ is the subgroup of $H^2(M;\mathbb{R})$, as the set of cohomology classes which can be represented by
  $g$-self-dual 2-forms.
 Since $h_J^-=b^+-1$, it follows that there exists $t_1>0$ such that
  $$\widetilde{\omega}>t_1F,$$
  in the sense of currents and hence there exists a symplectic form $\Omega$ that is compatible with the almost complex structure $J$.
  In fact, arguing as in the proof of Theorem 12 of \cite{Bu}, we can prove that there exists $u\in\Omega_J^{0,1}(M)$ such that
   $$\widetilde{\omega}=F-d_J^+(u+\bar{u})$$
   is a $d$-closed positive $(1,1)$-form on $M$, that is, $\widetilde{\omega}$ is an almost K\"{a}hler form on $M$.
The key technique is the introduction of the operator:
 $$\widetilde{\mathcal{D}}^+_J (\ {\rm resp.}\ \mathcal{D}^+_J) : L^2_2(M)_0\longrightarrow\Lambda^{1,1}_J\otimes L^2(M)$$
 which is similar to the operator $\partial\bar{\partial}$ as in classical complex analysis \cite{TaWaZhZh}, and study the space of weakly $\widetilde{\mathcal{D}}^+_J$ (resp. $\mathcal{D}^+_J$)-
 closed $J$-$(1,1)$ forms.\\

 In this paper, we will study the characterization of closed almost complex $4$-manifolds and Nakai-Moishezhon criterion for almost complex $4$-manifolds
 by using weakly $\widetilde{\mathcal{D}}^+_J$ (resp. $\mathcal{D}^+_J$)-closed technique which is developed in \cite{TaWaZhZh} and give main results as follows:\\

 Similar to Theorem 12 in \cite{Bu}, we have:\vspace{0.2cm}\\
 {\bf Theorem 4.3.}
{\it Suppose that $(M,J)$ is a closed tamed almost complex $4$-manifold with $h_J^-=b^+-1$, where $$\omega=F+d_J^-(v+\bar{v})$$ is the $J$-taming
symplectic $2$-form on $M$, $v\in \Omega_J^{0,1}(M)$. Then $$\widetilde{\omega}=F-d_J^+(v+\bar{v})$$
is a $J$-almost K\"{a}hler form on $M$ modulo the image of ${\widetilde{\mathcal{D}}}^+_J$.}\\

Similar to Theorem 38 of Harvey and Lawson \cite{HaLa0}, a weakened tamed symplectic structure can be characterized as follows:\vspace{0.2cm}\\
{\bf Theorem 4.7.}
{\it Suppose that $(M,g,J,F)$ is a closed almost Hermitian $4$-manifold. If $F$ is  weakly
 ${\textit{D}}_J^+$-closed, then there exist
 $$u_F\in \wedge_J^{0,1}\otimes L_1^2(M)\ \ and \ \  u_F'\in\Omega_J^{0,1}(M)$$
  such that $*_g\widetilde{F}$ is $d$-closed, where
  $$\widetilde{F}:=F+d_J^+(u_F+\bar{u}_F)=C_FF+d_g^-(u_F'+\bar{u}_F'),$$
  $$*_g\widetilde{F}=C_FF-d_g^-(u_F'+\bar{u}_F').$$
Furthermore, if $h_J^-=b^+-1$ and
$$\int_M(*_g\widetilde{F})^2>0,$$
 then there exists a $J$-almost K\"{a}hler form on $(M,J)$.}\\

As in compact complex surfaces, we give a Nakai-Moishezon criterion for tamed almost complex $4$-manifolds:\vspace{0.2cm}\\
 {\bf Theorem 5.1.}
 {\it Suppose that $(M,g,J,F)$ is a closed almost Hermitian 4-manifold with $h^-_J=b^+-1$
 tamed by a symplectic form $\omega=F+d_J^-(v+\bar{v})$, where $v\in \Omega_J^{0,1}(M)$.
 If $\chi\in (\wedge_J^{1,1}\otimes L^2(M))_{\widetilde{w}}$ satisfies
 $$\int_M\chi^2>0, \int_MF\wedge\chi>0,$$
 and
 $$\int_D\chi>0$$
 for each irreducible $J$-curve (i.e. $J$-invariant homology class in $H_+^J(M)$) $D$ on $M$. Then $\chi$ is homologous to
  a smooth closed positive $J$-$(1,1)$ form modulo the image of $d_J^+$, and homologous to a smooth positive $J$-$(1,1)$ form
  modulo the image of $\widetilde{\mathcal{D}}_J^+$.}\\

  In Theorem 5.1, the classical Nakai-Moishezon criterion for a closed tamed almost complex $4$-manifold was generalized to yield a characterization
   of the set of classes $H_J^+$ which can be represented by an almost K\"{a}hler form. Under the assumption that $h_J^-=b^+-1$, this result is further
   generalized to the case of weakly ${\mathcal{D}}_J^+$-closed modulo ${\mathcal{D}}_J^+$-exact $(1,1)$ form. The following theorem  is to
   demonstrate that the assumption is $h_J^+=b^+$:\vspace{0.2cm}\\
{\bf Theorem 5.3.}
{\it Let $(M,J)$ be a closed almost complex $4$-manifold equipped with a weakly ${\mathcal{D}}_J^+$-closed the fundamental $2$-form $F$
 and $h_J^-=b^+$. Let $\varphi$ be a smooth real weakly ${\mathcal{D}}_J^+$-closed $J$-$(1,1)$ form satisfying
 $$\int_M\varphi\wedge\varphi>0,\int_M\varphi\wedge F>0,\int_D\varphi>0,$$ for every irreducible $J$-curve $D\subset M$ with $D\cdot D<0.$
 If there is an effective non-zero integral $J$-curve $E$ on $M$ with $E\cdot E=0$, then there is a smooth function $f$ on $M$ such that
 $\varphi+{\mathcal{D}}_J^+(f)$} is positive.\\

 The remainder of the paper is organized as follows:\\

In section 2, we introduce the operator $\mathcal{D}_J^+$ and $\widetilde{\mathcal{D}}^+_J$ and their basic properties.

Section 3 is devoted to the intersection pairing on weakly $\widetilde{\mathcal{D}}_J^+$ (resp. $\mathcal{D}_J^+$)-closed $J$-(1,1) forms on almost complex manifolds.

In Section 4, we give a characterization of closed almost complex $4$-manifolds which admits tamed and weaken tamed symplectic structures (Theorems 4.3 and 4.7).

In section 5, we prove tamed version and weak version of  Nakai-Moishezon criterion for almost complex $4$-manifolds (Theorem \ref{th51} and Theorem \ref{th53}).

 Section 6 is an appendix on $(\mathcal{\widetilde{W}},d_J^-)$-problem and $(\mathcal{W},d_J^-)$-problem as $\bar{\partial}$-problem in classical complex analysis.

\section{Preliminaries}
 \setcounter{equation}{0}
 Suppose that $M$ is  an almost complex manifold with almost complex structure $J$,
   then for any $x\in M$, $T_x(M)\otimes_\mathbb{R}\mathbb{C}$ which is the complexification of $T_x(M)$,
   has the following decomposition (cf. \cite{BaPeVa,LiZh}):
   \begin{equation}\label{0010}
  T_x(M)\otimes_\mathbb{R}\mathbb{C}=T^{1,0}_x+T^{0,1}_x,
  \end{equation}
  where $T^{1,0}_x$ and $T^{0,1}_x$ are the eigenspaces of $J$ corresponding to the eigenvalues $\sqrt{-1}$ and $-\sqrt{-1}$, respectively.
   Let $TM\otimes_\mathbb{R}\mathbb{C}$ be the complexification of the tangent bundle.
   Similarly, let $T^*M\otimes_{\mathbb{R}}\mathbb{C}$ denote the complexification of the cotangent bundle $T^*M$,
   $J$ can act on $T^*M\otimes_{\mathbb{R}}\mathbb{C}$ as follows:
   $$\forall \alpha\in T^*M\otimes_{\mathbb{R}}\mathbb{C},\,\,\, J\alpha(\cdot)=-\alpha(J\cdot).$$
   Hence $T^*M\otimes_{\mathbb{R}}\mathbb{C}$ has the following decomposition according to the eigenvalues $\mp\sqrt{-1}$:
   \begin{equation}\label{0011}
   T^*M\otimes_{\mathbb{R}}\mathbb{C}=\Lambda^{1,0}_J\oplus\Lambda^{0,1}_J.
   \end{equation}
   One can form exterior bundle $\Lambda^{p,q}_J=\Lambda^p\Lambda^{1,0}_J\otimes\Lambda^q\Lambda^{0,1}_J$.
   Let $\Omega^{p,q}_J(M)$ denote the space of $C^\infty$ sections of the bundle $\Lambda^{p,q}_J$.
   The exterior differential operator acts on $\Omega^{p,q}_J$ as follows:
   \begin{equation*}
   d\Omega^{p,q}_J\subset\Omega^{p-1,q+2}_J+\Omega^{p+1,q}_J+\Omega^{p,q+1}_J+\Omega^{p+2,q-1}_J.
   \end{equation*}
  Hence, $d$ has the following decomposition:
  \begin{equation}\label{0012}
  d=A_J\oplus\partial_J\oplus\bar{\partial}_J\oplus\bar{A}_J.
  \end{equation}
  For more details, see \cite[Section 2]{TaWaZhZh}.

   Suppose that $(M,J)$ is an almost complex 4-manifold.
  One can construct a $J$-invariant Riemannian metric $g$ on $M$, namely,
  $$g(JX,JY)=g(X,Y)$$
   for all tangent vector fields $X$ and $Y$ on $M$.
  Such a metric $g$ is called an almost Hermitian metric (real) on $(M,J)$.
   This then in turn gives a $J$-compatible nondegenerate 2-form $F$ on $M$ by $F(X,Y)=g(JX,Y)$, called the fundamental 2-form.
  Such a quadruple $(M,g,J,F)$ is called an almost Hermitian 4-manifold.
  Thus an almost Hermitian structure on $M$ is a triple $(g,J,F)$.
    By using almost Hermitian structure $(g,J,F)$, one can define a volume form $d\mu_g=F^2/2$ with $$\int_Md\mu_g=1$$
   by rescaling in the conformal equivalent class $[g]$.
  If the 2-form $F$ is $d$-closed, then the triple $(g,J,F)$ is called an almost K\"{a}hler structure.
  When $F$ is closed and $J$ is integrable, the $(g,J,F)$ defines a K\"{a}hler structure on $M$ \cite{McSa}.
  Although $M$ need not admit a symplectic condition (that is, $dF=0$),
    P. Gauduchon \cite{Ga} showed that  for a closed almost Hermitian 4-manifold $(M,g,J,F)$ there is a conformal rescaling of the metric $g$,
     unique up to positive constant, such that the associated form satisfies $\bar{\partial}_J\partial_J F=0$.
     This metric is called Gauduchon metric.\\

  Let $\Omega^2_\mathbb{R}(M)$ denote the space of real smooth 2-forms on $M$.
   The almost complex structure $J$ acts on $\Omega^2_\mathbb{R}(M)$ as an involution by $$ \mathcal{J}:\alpha(\cdot,\cdot)\mapsto\alpha(J\cdot,J\cdot),$$
 which satisfies $\mathcal{J}^2=\mathrm{id}$ on $\Omega^2_\mathbb{R}(M)$.  Thus we have the splitting into $J$-invariant and $J$-anti-invariant 2-forms respectively
  \begin{equation}\label{2eq14}
  \Lambda^2_\mathbb{R}=\Lambda^+_J\oplus\Lambda^-_J,
  \end{equation}
   where the bundles $\Lambda^{\pm}_J$ are defined by
  $$
  \Lambda^{\pm}_J=\{\alpha\in\Lambda^2_\mathbb{R}\mid \alpha(J\cdot,J\cdot)=\pm\alpha(\cdot,\cdot)\}.
  $$
  We will denote by
  $$\Omega^+_J=\Omega_J^{1,1}$$
   and
   $$\Omega^-_J=(\Omega_J^{2,0}+\Omega_J^{0,2})_{\mathbb{R}},$$
   respectively, the $C^\infty$ sections of the bundles $\Lambda^+_J$ and $\Lambda^-_J$.
  For $\alpha\in\Omega^2_\mathbb{R}(M)$, denote by $\alpha^+_J$ and $\alpha^-_J$,
   respectively, the $J$-invariant and $J$-anti-invariant components of $\alpha$ with respect to the decomposition (\ref{2eq14}).
   We will also use the notation $\mathcal{Z}^2_\mathbb{R}$ for the space of real closed 2-forms on $M$
   and $\mathcal{Z}^\pm_J=\mathcal{Z}^2_\mathbb{R}\cap\Omega^\pm_J$ for the corresponding projections. Define
   \begin{equation}
   d^{\pm}_J=P^{\pm}_Jd: \Omega^1_\mathbb{R}(M)\rightarrow\Omega^{\pm}_J(M),
 \end{equation}
 where $$P_J^{\pm}=\frac{1}{2}(1\pm \mathcal{J})$$ are algebraic projections on $\Omega^2_\mathbb{R}(M)$.
   Li and Zhang have defined in \cite{LiZh} the $J$-invariant and $J$-anti-invariant cohomology subgroups $H^\pm_J$ of $H^2(M;\mathbb{R})$ as follows:
  $$H^\pm_J=\{\mathfrak{a}\in H^2(M;\mathbb{R})\mid \exists \alpha\in\mathcal{Z}^\pm_J \,\,\,{\rm such} \,\,\, {\rm that} \,\,\, [\alpha]=\mathfrak{a}\};$$
  $J$ is said to be $C^\infty$-pure if $H^+_J\cap H^-_J=\{0\}$, $C^\infty$-full if $H^+_J+H^-_J=H^2(M;\mathbb{R})$.
  $J$ is $C^\infty$-pure and full if and only if $H^2(M;\mathbb{R})=H^+_J\oplus H^-_J$. Draghici, Li and Zhang have the following result \cite[Theorem 2.2]{DLZ1}:
   \begin{prop}\label{2p1}
 If $(M,J)$ is a  closed almost complex 4-manifold,
   then the almost complex structure $J$ on $M$ is $C^\infty$-pure and full.
   Thus, there is a direct sum decomposition
   $$H^2(M;\mathbb{R})=H^+_J\oplus H^-_J.$$
   Let $h^+_J$ and $h^-_J$ denote the dimensions of $H^+_J$ and $H^-_J$, respectively.
   Then we have $b^2=h^+_J+h^-_J$, where $b^2$ is the second Betti number.
   \end{prop}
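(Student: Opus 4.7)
The plan is to exploit the low-dimensional algebra of $2$-forms on a $4$-manifold. Equipping $(M,J)$ with an almost Hermitian metric $g$ and fundamental form $F$, a direct computation in any local orthonormal frame adapted to $J$ gives the compatibility of the Hodge-star splitting and the $J$-type splitting:
$$\Lambda^+_g=\mathbb{R}F\oplus\Lambda^-_J,\qquad \Lambda^-_g\subset\Lambda^+_J.$$
In particular, every $J$-anti-invariant $2$-form is self-dual and every anti-self-dual $2$-form is $J$-invariant. A closed $J$-anti-invariant form $\alpha$ is therefore self-dual and closed, hence harmonic, since $d^{*}\alpha=-*d*\alpha=-*d\alpha=0$.

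For $C^\infty$-purity, suppose $\mathfrak{a}\in H^+_J\cap H^-_J$ with closed representatives $\alpha_+\in\Omega^+_J$ and $\alpha_-\in\Omega^-_J$. Pointwise $\alpha_+\in\mathbb{R}F\oplus\Lambda^-_g$ while $\alpha_-$ is self-dual and $J$-anti-invariant; since $F$ and $\alpha_-$ lie in different $\mathcal{J}$-eigenspaces and $\Lambda^-_g$ and $\alpha_-$ lie in different $*$-eigenspaces, the pointwise wedge product $\alpha_+\wedge\alpha_-$ vanishes. Integrating and using $[\alpha_+]=[\alpha_-]$,
$$0=\int_M\alpha_+\wedge\alpha_-=\int_M\alpha_-\wedge\alpha_-=\|\alpha_-\|_{L^2}^2,$$
so $\alpha_-=0$ and $\mathfrak{a}=0$.

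For $C^\infty$-fullness I would start from the unique harmonic representative $h$ of an arbitrary class and split it by the Hodge star as $h=h^+_g+h^-_g$. Since $h^-_g$ lies in $\Lambda^+_J$ and is closed, it already contributes a class in $H^+_J$. The genuine obstacle is the self-dual component $h^+_g$, which pointwise decomposes as $\lambda F+\mu$ with $\mu\in\Omega^-_J$, but whose two summands need not be individually $d$-closed. The plan is first to pass to a Gauduchon metric in the conformal class of $g$ (so that $\bar\partial_J\partial_JF=0$) and then to solve a linear elliptic equation for a $1$-form $\xi$ that adjusts $h^+_g$ by the exact form $d\xi$ so that $(h^+_g+d\xi)^-_J=\mu+d^-_J\xi$ is $d$-closed; by subtraction $(h^+_g+d\xi)^+_J$ is then automatically closed and $J$-invariant, yielding $[h^+_g]\in H^+_J+H^-_J$. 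Solvability reduces to a finite-dimensional cokernel computation that is controlled precisely by the Gauduchon condition, and this is the step I expect to be the main technical hurdle.

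Combining $C^\infty$-purity and $C^\infty$-fullness yields the decomposition $H^2(M;\mathbb{R})=H^+_J\oplus H^-_J$, and the dimension identity $b^2=h^+_J+h^-_J$ follows immediately.
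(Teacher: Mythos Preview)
The paper does not give its own proof of this proposition; it simply cites Draghici--Li--Zhang \cite[Theorem~2.2]{DLZ1}. Your purity argument is correct and is exactly the standard one: the pointwise orthogonality $\Lambda^+_J\wedge\Lambda^-_J=0$ together with self-duality of $\mathcal Z^-_J$ forces a common representative to vanish.

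Your fullness sketch is on the right track but takes an unnecessary detour. You do not need a Gauduchon metric, and the ``finite-dimensional cokernel controlled by the Gauduchon condition'' is a red herring here. The clean mechanism is precisely what the paper records as Lemma~\ref{2130} (Lejmi): the operator $P=P^-_J\,dd^*$ is self-adjoint elliptic on $\Omega^-_J$ with kernel $\mathcal Z^-_J$, so
\[
\Omega^-_J=\mathcal Z^-_J\oplus d^-_J\Omega^1_\mathbb{R}.
\]
Given the self-dual harmonic piece $h^+_g=\lambda F+\mu$ with $\mu\in\Omega^-_J$, write $\mu=\mu_0+d^-_J\xi$ with $\mu_0\in\mathcal Z^-_J$. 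Then $h^+_g-d\xi=(\lambda F-d^+_J\xi)+\mu_0$ has closed $J$-anti-invariant part $\mu_0$, so the $J$-invariant part $\lambda F-d^+_J\xi$ is automatically closed as well, and $[h^+_g]\in H^+_J+H^-_J$. No conformal normalization is needed; the ellipticity of $P$ already kills the obstruction you were worried about.
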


   It is worth remarking that on a closed almost complex $4$-manifold $(M,J)$,
   we can define the $J$-invariant and $J$-anti-invariant homology subgroups $H_{\pm}^J(M)$ of $H_2(M;\mathbb{R})$ with
   the following relation:
   $$H_2(M;\mathbb{R})=H_+^J(M)\oplus H_-^J(M).$$
   Also, $H_{\pm}^J(M)$ and $H^{\pm}_J(M)$ have the same dimension and are dual to each other. More details, see \cite{DLZ2,LiZh}.

      Since $(M,g,J,F)$ is a closed almost Hermitian 4-manifold, Hodge star operator $*_g$ with respect to the metric $g$ gives the self-dual,
   anti-self-dual decomposition of the bundle of 2-forms \cite{DoKr}:
  \begin{equation}\label{2eq15}
  \Lambda^2_\mathbb{R}=\Lambda^+_g\oplus\Lambda^-_g.
  \end{equation}
   We denote $\Omega^\pm_g$ as the spaces of smooth sections of $\Lambda^\pm_g$.
   Since the Hodge-de Rham Laplacian
   $$\Delta_g=dd^*+d^*d:\Omega^2(M)\rightarrow\Omega^2(M),$$
   where $d^*=-*_gd*_g$ is the codifferential operator with respect to the metric $g$,
    commutes with Hodge star operator $*_g$, the decomposition (\ref{2eq15}) holds for the space $\mathcal{H}_g$ of harmonic 2-forms as well.
    By Hodge theory, this induces a cohomology decomposition by the metric $g$:
  $$ \mathcal{H}_g=\mathcal{H}_g^+\oplus\mathcal{H}_g^-.$$

  Let
    \begin{equation}
  d^\pm_g=P_g^{\pm}d:\Omega^1_\mathbb{R}\rightarrow\Omega^\pm_g
 \end{equation}
 be the first-order differential operator defined by the composite of the exterior derivative
   $d: \Omega^1_\mathbb{R}\rightarrow\Omega^2_\mathbb{R}$ with the algebraic projections
   $$P^\pm_g=\frac{1}{2}(1\pm*_g)$$
   from $\Omega^2_\mathbb{R}$ to $\Omega^\pm_g$.
   So we can get the following Hodge decompositions \cite{DoKr}:
   \begin{equation*}
   \Omega^+_g=\mathcal{H}^+_g\oplus d^+_g(\Omega_{\mathbb{R}}^1), \,\,\, \Omega^-_g=\mathcal{H}^-_g\oplus d^-_g(\Omega_{\mathbb{R}}^1).
  \end{equation*}
    Note that
   \begin{equation}\label{2eq16'}
  d^\pm_gd^*:\Omega^\pm_g\rightarrow\Omega^\pm_g
   \end{equation}
  are self-adjoint strongly elliptic operators and $\ker d^\pm_gd^*=\mathcal{H}^\pm_g$.
  If $d^+_gu$ is $d$-closed, that is, $dd^+_gu=0$, then
  $$
   0=\int_Mdd^+_gu\wedge u=-\int_Md^+_gu\wedge du=-\int_M|d^+_gu|^2,
  $$
  so $d^+_gu=0$. Similarly, for any $u\in \Omega^1_\mathbb{R}$, if $d^+_gu=0$,
    \begin{equation*}
   0=\int_Mdu\wedge du=\int_M|d^+_gu|^2-\int_M|d^-_gu|^2=-\int_M|d^-_gu|^2,
  \end{equation*}
  so $d^-_gu=0$ too, therefore we can get $du=0$ \cite{DoKr}.

  Define
  $$
    H^\pm_g=\{ \mathfrak{a}\in H^2(M;\mathbb{R})\mid\exists\alpha\in\mathcal{Z}^\pm_g:=\mathcal{Z}_{\mathbb{R}}^2\cap\Omega^\pm_g
     \,\,\, {\rm such} \,\,\, {\rm that} \,\,\, \mathfrak{a}=[\alpha]\}.
   $$
    There are the following relations:
   \begin{equation}\label{2eq18}
  \Lambda^+_J=\mathbb{R}\cdot F\oplus\Lambda^-_g,\,\,\,
  \Lambda^+_g=\mathbb{R}\cdot F\oplus\Lambda^-_J,
  \end{equation}
  \begin{equation}\label{2eq19}
  \Lambda^+_J\cap\Lambda^+_g=\mathbb{R}\cdot F, \,\,\, \Lambda^-_J\cap\Lambda^-_g=\{0\}.
  \end{equation}
  It is easy to see that $H^-_J\subset H^+_g$ and $H^-_g\subset H^+_J$ (cf. \cite{DLZ2,TWZZ}).

   Let $b^+$ denote the self-dual Betti number, and $b^-$ the anti-self-dual Betti number of $M$, hence $b^2=b^++b^-$.
   Thus, for a closed almost Hermitian 4-manifold $(M,g,J,F)$, we have (c.f. Tan, Wang, Zhang and Zhu \cite{TWZZ}):
   $$
   \mathcal{Z}^-_J\subset\mathcal{Z}^+_g, \,\,\, \mathcal{Z}^-_g\subset\mathcal{Z}^+_J, b^++b^-=h^+_J+h^-_J, \,\,\, h^+_J\geq b^-, \,\,\, 0\leq h^-_J\leq b^+.
   $$
   If $(M,g,J,\omega)$ is a closed tamed $4$-manifold, then \cite{TaWaZh}
   $$0\leq h_J^-\leq b^+-1.$$
   It is worth remarking that $b^2$, $b^+$ and $b^-$ are topological invariants, but $h_J^+$ and $h_J^-$ are not topological invariants \cite{TWZZ,TaWaZh}.\\

   M. Lejmi \cite{Le0,Le1} recognizes $\mathcal{Z}^-_J$ as the kernel of an elliptic operator on $\Omega^-_J$ and gives the following result \cite[Lemma 4.1]{Le1}:
  \begin{lem} \label{2130}
   Let $(M,g,J,F)$ be a closed almost Hermitian 4-manifold.
  Let operator
  $$P:\Omega^-_J\rightarrow\Omega^-_J$$
   be defined by
  \begin{eqnarray*}
  P(\psi)=P^-_J(dd^*\psi),
  \end{eqnarray*}
  for $\psi\in\Omega^-_J$.
   Then $P$ is a self-adjoint strongly elliptic linear operator with kernel the $g$-self-dual-harmonic, $J$-anti-invariant 2-forms.
  Hence,
  \begin{eqnarray*}
  \Omega^-_J=\ker P\oplus d^-_J\Omega^1_\mathbb{R}.
  \end{eqnarray*}
   \end{lem}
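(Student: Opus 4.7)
The plan is to factor $P = L^{*}L$ where $L := d^{*}|_{\Omega^{-}_{J}} : \Omega^{-}_{J} \to \Omega^{1}_{\mathbb{R}}$ has formal adjoint $L^{*} = P_{J}^{-}d$. Since $\mathcal{J}$ is an orthogonal involution on $\Lambda^{2}_{\mathbb{R}}$, the splitting $\Lambda^{2}_{\mathbb{R}} = \Lambda^{+}_{J} \oplus \Lambda^{-}_{J}$ is $g$-orthogonal, so $P_{J}^{-}$ is the $L^{2}$-orthogonal projection; consequently, for any $\psi_{1}, \psi_{2} \in \Omega^{-}_{J}$,
\[
\langle P\psi_{1}, \psi_{2}\rangle = \langle dd^{*}\psi_{1}, \psi_{2}\rangle = \langle d^{*}\psi_{1}, d^{*}\psi_{2}\rangle,
\]
which shows that $P$ is formally self-adjoint and non-negative at one stroke.

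The main technical step, and the one I expect to be the chief obstacle, is the verification of strong ellipticity. Because $\sigma_{1}(d^{*})(\xi) = -\iota_{\xi^{\sharp}}$, strong ellipticity of $P = L^{*}L$ is equivalent to injectivity of the contraction map $\iota_{\xi^{\sharp}} : \Lambda^{-}_{J,x} \to T^{*}_{x}M$ for every non-zero $\xi$. I would check this pointwise in a unitary coframe $e^{1}, \ldots, e^{4}$ with $Je^{1} = e^{2}$, $Je^{3} = e^{4}$, in which $\Lambda^{-}_{J}$ is spanned by $e^{13} - e^{24}$ and $e^{14} + e^{23}$. Writing $\xi = \sum \xi_{i} e^{i}$ and $\psi = a(e^{13} - e^{24}) + b(e^{14} + e^{23})$, the equation $\iota_{\xi^{\sharp}}\psi = 0$ splits into two $2 \times 2$ homogeneous systems in $(a,b)$ with determinants $-(\xi_{3}^{2} + \xi_{4}^{2})$ and $\xi_{1}^{2} + \xi_{2}^{2}$ respectively; at least one of these is non-zero when $\xi \neq 0$, forcing $a = b = 0$.

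The kernel is then read off directly: from $0 = \langle P\psi, \psi\rangle = \|d^{*}\psi\|^{2}$ we get $d^{*}\psi = 0$, and since the inclusion $\Lambda^{-}_{J} \subset \Lambda^{+}_{g}$ already recorded in the excerpt makes $\psi$ $g$-self-dual, the identity $d^{*}\psi = -*_{g}d*_{g}\psi = -*_{g}d\psi$ together with invertibility of $*_{g}$ also gives $d\psi = 0$; hence $\ker P = \mathcal{H}_{g}^{+} \cap \Omega^{-}_{J}$, the space of $g$-self-dual harmonic $J$-anti-invariant $2$-forms. The reverse inclusion is immediate.

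Finally, strong ellipticity of $P$ on the closed manifold $M$ yields the $L^{2}$-orthogonal splitting $\Omega^{-}_{J} = \ker P \oplus \mathrm{Image}(P)$ with closed range. The containment $\mathrm{Image}(P) \subset d^{-}_{J}\Omega^{1}_{\mathbb{R}}$ is automatic from $P\psi = d^{-}_{J}(d^{*}\psi)$, and integration by parts gives $\langle d^{-}_{J}\alpha, \phi\rangle = \langle \alpha, d^{*}\phi\rangle = 0$ for any $\phi \in \ker P$, so $d^{-}_{J}\Omega^{1}_{\mathbb{R}} \subset (\ker P)^{\perp} = \mathrm{Image}(P)$. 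Combining both inclusions yields the claimed decomposition $\Omega^{-}_{J} = \ker P \oplus d^{-}_{J}\Omega^{1}_{\mathbb{R}}$.
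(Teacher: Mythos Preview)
Your proof is correct and complete. The paper does not supply its own proof of this lemma, citing instead Lejmi \cite[Lemma 4.1]{Le1}; your argument---factoring $P=L^{*}L$ with $L=d^{*}|_{\Omega^{-}_{J}}$, verifying injectivity of the symbol $\iota_{\xi^{\sharp}}$ on $\Lambda^{-}_{J}$ in a unitary coframe, and then reading off the kernel and the Hodge-type decomposition from standard elliptic theory---is essentially the expected one and fills in exactly what the paper leaves to the reference.
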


In the case of a closed almost Hermitian 4-manifold $(M,g,J,F)$, the splitting of 2-forms into types is compatible with
the splitting of forms into self-dual and anti-self-dual parts from the underlying Riemannian metric induced by $g$, namely,
$$\wedge_g^+=\mathbb{R}\cdot F+(\wedge_J^{2,0}+\wedge_J^{0,2})_{\mathbb{R}},\wedge_g^-=ker F\wedge:\wedge_J^{1,1}\longrightarrow\wedge_J^{2,2}.$$
Let $\wedge$ be the adjoint operator of
$$L:f\longrightarrow fF,$$
 for $f\in C^{\infty}(M).$ Hence, for a real $J$-$(1,1)$ form $\psi$,
 \begin{equation}\label{40001}
 F\wedge\psi=(\wedge\psi)F^2/2,
 \end{equation}
 \begin{equation}\label{40002}
 *_g\psi=(\wedge\psi)F-\psi,
 \end{equation}
 \begin{equation}\label{40003}
 |\psi|_g^2dvol_g=(\wedge\psi)^2F^2/2-\psi^2.
 \end{equation}
 \begin{lem}(c.f. \cite[Lemma 1]{Bu}, \cite[Lemma 3.1]{TaWaZhZh})\label{311}
  Suppose that $(M,g,J,F)$ is a closed almost Hermitian 4-manifold.
  Then $$d^+_J: \Lambda^1_\mathbb{R}\otimes L^2_1(M)\longrightarrow\Lambda^{1,1}_J\otimes L^2(M)$$ has closed range.
  \end{lem}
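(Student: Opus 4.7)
The plan is to realize $d^+_J$ as one component of an overdetermined elliptic system and then extract closed range for $d^+_J$ alone through a Hodge-decomposition argument. Form the first-order operator
\[
D:=d^+_J\oplus d^*:\Lambda^1_{\mathbb R}\otimes L^2_1(M)\longrightarrow\bigl(\Lambda^{1,1}_J\otimes L^2(M)\bigr)\oplus L^2(M),\qquad Du=(d^+_J u,d^*u).
\]
Its principal symbol at $\xi\in T^*_xM\setminus\{0\}$ sends $u\mapsto(P^+_J(\xi\wedge u),-\iota_\xi u)$. The crucial algebraic fact is that $\ker P^+_J(\xi\wedge\cdot)=\mathbb R\xi$ on $T^*_xM$: in a $J$-adapted orthonormal coframe $(e^1,e^2=Je^1,e^3,e^4=Je^3)$ with $\xi=e^1$, expansion of $P^+_J(\xi\wedge u)$ against the basis
\[
\{e^1\wedge e^2,\ e^3\wedge e^4,\ e^1\wedge e^3+e^2\wedge e^4,\ e^1\wedge e^4-e^2\wedge e^3\}
\]
of $\Lambda^{1,1}_J$ forces the components of $u$ along $e^2,e^3,e^4$ to vanish. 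Combined with $\iota_\xi u=0$ this gives $u=0$, so $\sigma_D(\xi)$ is injective and $D$ is overdetermined elliptic.

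Standard theory then supplies that $\ker D$ is finite-dimensional and consists of smooth forms, and that the G\aa rding inequality $\|u\|_{L^2_1}\le C(\|Du\|_{L^2}+\|u\|_{L^2})$ holds; a routine Rellich-compactness argument upgrades this to the coercive estimate $\|u\|_{L^2_1}\le C'\|Du\|_{L^2}$ for every $u$ in the $L^2$-orthogonal complement of $\ker D$. To pass from $D$ to $d^+_J$, given $u\in\Lambda^1_{\mathbb R}\otimes L^2_1(M)$ I would Hodge-decompose $u=h+df+d^*\psi$. Both $h$ and $df$ are $d$-closed, hence lie in $\ker d^+_J$, so $d^+_J u=d^+_J v$ for $v:=d^*\psi$, and $v$ is coclosed because $d^*d^*=0$. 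Splitting $v=v_0+v_1$ with $v_0$ the $L^2$-projection of $v$ onto $\ker D$ (still coclosed, since $\ker D\subset\ker d^*$) and $v_1\perp\ker D$ in $L^2$, one has $Dv_1=(d^+_J v_1,0)=(d^+_J u,0)$, whence $\|v_1\|_{L^2_1}\le C'\|d^+_J u\|_{L^2}$. Since $h+df+v_0\in\ker d^+_J$, one concludes
\[
\mathrm{dist}_{L^2_1}\bigl(u,\ker d^+_J\bigr)\le\|v_1\|_{L^2_1}\le C'\|d^+_J u\|_{L^2},
\]
which is exactly a lower bound on the descended injective map $L^2_1/\ker d^+_J\to L^2(\Lambda^{1,1}_J)$ and is therefore equivalent to closed range of $d^+_J$.

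The main obstacle is the pointwise symbol computation $\ker P^+_J(\xi\wedge\cdot)=\mathbb R\xi$; this is a four-dimensional phenomenon, tied to $\dim_{\mathbb R}\Lambda^{1,1}_J=4=\dim T^*_xM$ and needing the explicit basis of $\Lambda^{1,1}_J$ recorded above (in higher even dimensions the corresponding kernel would be strictly larger and one would have to augment $D$ with further equations). Once overdetermined ellipticity of $D$ is established, the remaining Hodge- and Rellich-theoretic steps are routine.
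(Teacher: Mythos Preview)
Your argument is correct, but it is genuinely different from the paper's. The paper never invokes overdetermined ellipticity of $d^+_J\oplus d^*$; instead it exploits the algebraic identities \eqref{40001}--\eqref{40003} together with Stokes' theorem to derive, for any real $1$-form $u$, the explicit inequality
\[
\|du\|_{L^2}^2 \;=\; \|d^+_J u\|_{L^2}^2 + \|d^-_J u\|_{L^2}^2 \;\le\; 2\|d^+_J u\|_{L^2}^2,
\]
coming from $\int_M (du)^2 = 0$ and the type decomposition. With this in hand, the paper projects onto the $d^*$-image and applies only the standard first-order estimate for $d\oplus d^*$, bypassing any symbol computation for $d^+_J$.

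The trade-offs are clear. The paper's route is more elementary---it uses nothing beyond Hodge theory for the de Rham complex and an integration-by-parts identity special to almost Hermitian $4$-manifolds---and it yields an explicit constant. Your route is more conceptual: the symbol computation $\ker P^+_J(\xi\wedge\cdot)=\mathbb{R}\xi$ isolates exactly why dimension four is special, and the closed-range conclusion then follows from general overdetermined-elliptic machinery, so the argument would transport more readily to analogous operators. Both proofs land on the same Hodge-theoretic endgame (passing to a coclosed representative orthogonal to a finite-dimensional kernel), so the divergence is really in how one arrives at the coercive estimate.
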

  \begin{proof}
 Let $u_i\in\Lambda^{0,1}_J\otimes L_1^2(M)$, then $\{u_i+\bar{u}_i\}$ is a sequence of  real 1-forms on $M$ with coefficients in $L_1^2$
 such that
 $$\psi_i=d_J^+(u_i+\bar{u}_i)=\partial_J u_i+\bar{\partial}_J\bar{u}_i$$
converges in $L^2$ to some $\psi\in\Lambda^{1,1}_J\otimes L^2(M)$.  By smoothing and  diagonalising \cite{Au},
  it can be assumed without loss of generality that $\psi_i$ is smooth for each $i$.
  Since
    \begin{eqnarray*}
    d(u_i+\bar{u}_i)=d_J^+(u_i+\bar{u}_i)+ d_J^-(u_i+\bar{u}_i)=\psi_i+d_J^-(u_i+\bar{u}_i),
     \end{eqnarray*}
      \begin{eqnarray*}
    d(u_i+\bar{u}_i)\wedge d(u_i+\bar{u}_i)=\psi_i^2+(d_J^-(u_i+\bar{u}_i))^2,
    \end{eqnarray*}
    and
    \begin{eqnarray*}
    d(u_i+\bar{u}_i)\wedge*_g d(u_i+\bar{u}_i)=\psi_i\wedge*_g\psi_i+(d_J^-(u_i+\bar{u}_i))^2,
    \end{eqnarray*}
  by \eqref{40001}-\eqref{40003}, using Stokes Theorem,
   \begin{eqnarray*}
&\|\psi_i\|^2&=\int_M(\wedge \psi_i )^2\frac{F^2}{2}-\int_M\psi_i^2\\
&=&\int_M(\wedge\psi_i)^2\frac{F^2}{2}+\int_M(d_J^-(u_i+\bar{u}_i))^2\\
&=&\|\wedge\psi_i\|^2+\|(d_J^-(u_i+\bar{u}_i))\|^2,
\end{eqnarray*}
and
 \begin{eqnarray*}
\|d(u_i+\bar{u}_i)\|^2&=&\int_Md(u_i+\bar{u}_i)\wedge*_gd(u_i+\bar{u}_i)\\
&=&\|\psi_i\|^2+\|(d_J^-(u_i+\bar{u}_i))\|^2\\
&\leq&2\|\psi_i\|^2.
\end{eqnarray*}
  So it follows that $d(u_i+\bar{u}_i)$ is bounded in $L^2$. Note that
   $$\Omega^1(M)=\mathcal{}H^1\oplus \mathrm{Im} d\oplus \mathrm{Im} d^*.$$
 Let $\widetilde{u}_i+\widetilde{\bar{u}}_i$ be the $L^2$-projection of $u_i+\bar{u}_i$  to $\mathrm{Im} d^*$, that is,
  $\widetilde{u}_i+\widetilde{\bar{u}}_i\in \mathrm{Im} d^*.$
Obviously, $$d^*(\widetilde{u}_i+\widetilde{\bar{u}}_i)=0,$$
  and $\widetilde{u}_i+\widetilde{\bar{u}}_i$ is perpendicular to the harmonic 1-forms \cite{DoKr}.
 Hence there is a constant $C$ such that
 \begin{equation*}
 \|\widetilde{u}_i+\widetilde{\bar{u}}_i\|_{L^2_1(M)}\leq C(\|d(\widetilde{u}_i+\widetilde{\bar{u}}_i)\|_{L^2(M)}+\|d^*(\widetilde{u}_i+\widetilde{\bar{u}}_i)\|_{L^2(M)})= \mathrm{Const}.,
 \end{equation*}
 so a subsequence of the sequence $\{\widetilde{u}_i+\widetilde{\bar{u}}_i\}$ converges weakly in $L^2_1$ to some $\widetilde{u}+\widetilde{\bar{u}}\in\Lambda^1_\mathbb{R}\otimes L^2_1(M)$. Since
 $$d^+_J(\widetilde{u}_i+\widetilde{\bar{u}}_i)=\psi_i,$$
 it follows that
  $$d^+_J(\widetilde{u}+\widetilde{\bar{u}})=\psi.$$
 \end{proof}
Let $(M,J)$ be an almost complex $4$-manifold. We can study $J$-$(1,1)$ currents on $M$ (c.f. \cite{HaLa0,HaLa,LiZh1,LiZh2,TaWaZhZh,Zh}).
Similar to Proposition 25 in \cite{HaLa0} for compact complex surfaces, we have the following proposition:
\begin{prop}\label{204}
Assume that $T$ is a real $J$-(1,1) current on a closed almost complex $4$-manifold $(M,J)$, and $T=d_J^+S$ for some real current $S=S^{1,0}+S^{0,1}$,
if $dT=0$ in the sense of curents, then, in fact, $T=dS$, that is, $d_J^-S=0$.
\end{prop}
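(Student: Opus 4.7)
The idea is to set $\phi := d_J^- S$ and prove $\phi = 0$ by showing that $\phi$ is a smooth $g$-self-dual harmonic form which is $L^2$-orthogonal to itself. Since the operators $d_J^\pm = P_J^\pm\circ d$ extend from smooth forms to $1$-currents by duality from the orthogonal projections $P_J^\pm$, one has the identity of $2$-currents
\begin{equation*}
dS \;=\; d_J^+ S + d_J^- S \;=\; T + \phi.
\end{equation*}
Applying $d$ once more and using $d^2 = 0$ on currents together with the hypothesis $dT = 0$ immediately gives $d\phi = 0$.

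Next, I would invoke the pointwise inclusion $\Lambda_J^- \subset \Lambda_g^+$ coming from \eqref{2eq18}, which by duality transfers to $2$-currents as $*_g\phi = \phi$. Combined with $d\phi = 0$, this yields $d^*\phi = -*_g d *_g \phi = -*_g d\phi = 0$, so $\Delta_g\phi = 0$. Elliptic regularity for the Hodge--de Rham Laplacian then forces $\phi$ to be smooth, and hence $\phi \in \mathcal{H}_g^+ \cap \Omega_J^- = \ker P$, in the notation of Lemma \ref{2130}.

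The last step is to test the current identity $\phi = dS - T$ against the (now smooth) form $\phi$ itself:
\begin{equation*}
\|\phi\|_{L^2}^2 \;=\; \int_M \phi \wedge *_g\phi \;=\; \int_M \phi \wedge \phi \;=\; (dS)(\phi) - T(\phi).
\end{equation*}
The first term equals $S(d\phi) = 0$ by the definition of $d$ on currents and the fact $d\phi = 0$. The second term vanishes by bidegree: $T$ is a real $J$-$(1,1)$ current while $\phi$ is real $J$-anti-invariant of pointwise type $(2,0)+(0,2)$, so $T(\phi) = T(P_J^+\phi) = T(0) = 0$; equivalently, $T\wedge\phi$ has pointwise type $(3,1)+(1,3)$, which is zero on the real $4$-manifold $M$. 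Therefore $\|\phi\|_{L^2}^2 = 0$, hence $\phi = 0$, i.e.\ $d_J^- S = 0$ and $T = dS$, as desired.

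The main technical obstacle is the careful handling of currents in place of smooth forms: one must verify that $*_g$, $d^*$, the identity $dS = d_J^+ S + d_J^- S$, and the elliptic regularity $\Delta_g\phi = 0 \Rightarrow \phi \in C^\infty$ all extend properly by duality to $J$-anti-invariant $2$-currents. Once this is set up, the conceptual crux is the $4$-dimensional bidegree identity $\Lambda^{1,1}_J \wedge \Lambda_J^- = 0$, which is precisely what makes $T(\phi) = 0$ and closes the argument.
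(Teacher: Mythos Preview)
Your proof is correct and follows essentially the same strategy as the paper's: both establish that $\phi = d_J^- S$ is $d$-closed, invoke the inclusion $\Lambda_J^- \subset \Lambda_g^+$ to identify $\phi \wedge \phi$ with $\|\phi\|_g^2$, and use the bidegree vanishing $T \wedge \phi = 0$ (i.e.\ $(1,1)\wedge\bigl((2,0)+(0,2)\bigr)=0$ in complex dimension $2$) to conclude $\int_M \phi \wedge \phi = 0$. The paper obtains this integral directly by Stokes' theorem applied to $\int_M d(S \wedge d_J^- S)$, whereas you first pass through elliptic regularity to show $\phi$ is a smooth harmonic self-dual form before pairing the current $dS - T$ against it. Your detour is a genuine technical improvement: the paper's expression $S \wedge d_J^- S$ is a wedge product of two currents with no a priori regularity, which is not automatically well-defined, while in your argument every pairing is between a current and a smooth test form.
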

\bp
$$T=d_J^+S=\bar{\partial}_JS^{1,0}+\partial_JS^{0,1}=dS-d_J^-S.$$
It follows that
$$0=dT=-dd_J^-S,$$
in the sense of currents.
Hence, by Stokes Theorem,
\begin{eqnarray}\label{5001}
0&=&\int_Md(S\wedge d_J^-S)\nonumber\\
&=&\int_Md_J^-S\wedge d_J^-S.
\end{eqnarray}
We can construct an almost Hermitian structure, $(g,J,F)$, on $M$. It is easy to see that $\Omega_J^-\subset \Omega_g^+$.
Hence, by \eqref{2eq18},
$$d_J^-S\wedge d_J^-S=\|d_J^-S\|_g^2,$$
where $\|\cdot\|_g$ is the norm of $\Omega_{\mathbb{R}}^2$ with respect to  the metric $g$.
 By \eqref{5001}, it follows that $d_J^-S=0$.
\ep
 By using the operator $P$
defined in Lemma \ref{2130}, Tan, Wang, Zhou and Zhu \cite{TaWaZhZh}  introduced the $\mathcal{D}^+_J$ operator:
\begin{defi}\label{0103}
 Let $(M,g,J,F)$ be a closed almost Hermitian 4-manifold. Denote by
  $$L_2^2(M)_0:=\{f\in L_2^2(M)|\int_Mfd\mu_g=0\}.$$
  Set
   $$\mathcal{W}: L^2_2(M)_0\longrightarrow\Lambda^1_\mathbb{R}\otimes L^2_1(M),$$
   $$\mathcal{W}(f)=Jdf+d^*(\eta^1_f+\overline{\eta}^1_f),$$
where $\eta^1_f\in\Lambda^{0,2}_J\otimes L^2_2(M)$ satisfies $$d^-_JJdf+d^-_Jd^*(\eta^1_f+\overline{\eta}^1_f)=0.$$
  Obviously,
   $$d^-_J\mathcal{W}(f)=0.$$
Define
$$\mathcal{D}^+_J: L^2_2(M)_0\longrightarrow\Lambda^{1,1}_J\otimes L^2(M),$$
$$\mathcal{D}^+_J(f)=d\mathcal{W}(f).$$
$\psi\in\Lambda^{1,1}_J\otimes L^2(M)$ is said to be weakly $\mathcal{D}^+_J$-closed if
$$\int_M\psi\wedge\mathcal{D}^+_J(f)=0,$$
for any $f\in L_2^2(M)_0$.
\end{defi}
  \begin{rem}\label{0108}
 If $J$ is integrable, $\bar{\partial}^2_J=\partial^2_J=0$ and $\partial_J\bar{\partial}_J+\bar{\partial}_J\partial_J=0$,
  then $dJdf=2\sqrt{-1}\partial_J\bar{\partial}_Jf=\mathcal{D}^+_J(f)$. Hence, $\mathcal{D}^+_J$ can be viewed as a generalized
  $\partial\bar{\partial}$-operator in classical complex analysis. It is easy to see if
  the fundamental $2$-form $F$ is a $(1,1)$-component of a symplectic form on $M$, it is weakly $\mathcal{D}^+_J$-closed.
 $d^+_J(u+\bar{u})$ are also weakly $\mathcal{D}^+_J$-closed,
   where $u\in\Lambda^{0,1}_J\otimes L^2_1(M)$ \cite[Lemma 3.4]{TaWaZhZh}.
  \end{rem}
Suppose that $(M,g,J,F)$ is a closed almost Hermitian 4-manifold
 tamed by a symplectic form $F+d^-_J(v+\bar{v})$, for $v\in\Omega^{0,1}_J(M)$. Then we can define $\widetilde{\mathcal{D}}^+_J$ operator as follows \cite{TaWaZhZh}:
\begin{defi}\label{0104} Assume that $(M,g,J,F)$ is a closed almost Hermitian 4-manifold
 tamed by a symplectic form $F+d^-_J(v+\bar{v})$, where $v\in\Omega^{0,1}_J(M)$.
  Set $\mathcal{\widetilde{W}}: L^2_2(M)_0\longrightarrow\Lambda^1_\mathbb{R}\otimes L^2_1(M)$:
  $$\mathcal{\widetilde{W}}(f)=\mathcal{W}(f)-*_g(df\wedge d^-_J(v+\bar{v}))+d^*(\eta^2_f+\overline{\eta}^2_f),$$
where $\eta^2_f\in\Lambda^{0,2}_J\otimes L^2_2(M)$ satisfies
$$-d_J^-*_g(df\wedge d_J^-(v+\bar{v}))+d_J^-d^*(\eta^2_f+\bar{\eta}^2_f)=0.$$
Obviously,
   $$d^*\mathcal{\widetilde{W}}(f)=0, \,\,\, d^-_J\mathcal{\widetilde{W}}(f)=0.$$
Define $$\widetilde{\mathcal{D}}^+_J: L^2_2(M)_0\longrightarrow\Lambda^{1,1}_J\otimes L^2(M),$$
$$\widetilde{\mathcal{D}}^+_J(f)=d\mathcal{\widetilde{W}}(f).$$
 $\psi\in\Lambda^{1,1}_J\otimes L^2(M)$ is said to be weakly $\widetilde{\mathcal{D}}^+_J$-closed if
 $$\int_M\psi\wedge\widetilde{\mathcal{D}}^+_J(f)=0,$$
 for any $f\in L^2_2(M)_0$.
  \end{defi}
Obviously, $F$ and $d_J^+(u+\bar{u})$
 are  weakly $\widetilde{\mathcal{D}}^+_J$-closed, where $u\in\wedge_J^{0,1}\otimes L_1^2(M)$.
It is also worth remarking that if $dF=0$ (that is, $F$ is a symplectic form), then $dd_J^-(v+\bar{v})=0$ which implies that $d_J^-(v+\bar{v})=0$ by Stokes Theorem. Thus, $\widetilde{\mathcal{D}}^+_J=\mathcal{D}^+_J$. In the remainder of this paper, the fundamental $2$-form $F$ is said to be weakly $\widetilde{\mathcal{D}}^+_J$-closed if $F$ is the $(1,1)$-component of a symplectic form $\omega=F+d_J^-(v+\bar{v})$ for some $v\in \Omega_J^{0,1}(M)$.\\

  By using operator $\widetilde{\mathcal{D}}^+_J$ (resp. ${\mathcal{D}}^+_J$), we can define space of weakly $\widetilde{\mathcal{D}}^+_J$ (resp. ${\mathcal{D}}^+_J$)-closed $J$-$(1,1)$ forms as follows \cite{TaWaZhZh}:
 \begin{defi}\label{205}
 (i) Let $(M,g,J,F)$ be a closed almost Hermitian $4$-manifold. We define
 $$(\Lambda^{1,1}_J\otimes L^2(M))_{w}:=\{\psi\in\Lambda^{1,1}_J\otimes L^2(M)\,\,| \int_M\psi\wedge{\mathcal{D}}^+_J(f)=0, \forall f\in L^2_2(M)_0\}$$
which is  the space of weakly ${\mathcal{D}}^+_J$-closed $J$-(1,1) forms.
Furthermore, if $F$ is weakly ${\mathcal{D}}^+_J$-closed, then we define\\
$(\Lambda^{1,1}_J\otimes L^2(M))^0_{w}:=
  \{cF+\psi \mid \,\,\,c\in\mathbb{R},\,\,\,
   \psi\in(\Lambda^{1,1}_J\otimes L^2(M))_{w}$
  $$
   {\rm satisfies}\,\,\,
    P^+_g(\psi)\perp (H^+_g\cap H_J^+)\  {\rm and} \ F, \, \,\,{\rm with\,\,\, respect\,\,\, to\,\,\, the\,\,\, integration}\}.
  $$
(ii) Similarly, assume that $(M,g,J,F)$ is a closed almost Hermitian 4-manifold
 tamed by the symplectic form $F+d^-_J(v+\bar{v})$, for $v\in\Omega^{0,1}_J$. We define
 $$(\Lambda^{1,1}_J\otimes L^2(M))_{\widetilde{w}}:=\{\psi\in\Lambda^{1,1}_J\otimes L^2(M)\,\,| \int_M\psi\wedge{\widetilde{\mathcal{D}}}^+_J(f)=0, \forall f\in L^2_2(M)_0\}$$
which is  the space of weakly $\widetilde{{\mathcal{D}}}^+_J$-closed $J$-(1,1) forms.
we also define\\
$$(\Lambda^{1,1}_J\otimes L^2(M))^0_{\widetilde{w}}:=
  \{cF+\psi \mid \,\,\,c\in\mathbb{R},\,\,\,
   \psi\in(\Lambda^{1,1}_J\otimes L^2(M))_{\widetilde{w}}$$
  $$
   {\rm satisfies}\,\,\,
    P^+_g(\psi)\perp (H^+_g\cap H_J^+) \,\,\,{\rm with\,\,\, respect\,\,\, to\,\,\, the\,\,\, integration}\}.
  $$
  \end{defi}

  It is clear that
  $$(\Lambda^{1,1}_J\otimes L^2(M))^0_{w}\subset(\Lambda^{1,1}_J\otimes L^2(M))_{w}$$
  and
   $$(\Lambda^{1,1}_J\otimes L^2(M))^0_{\widetilde{w}}\subset(\Lambda^{1,1}_J\otimes L^2(M))_{\widetilde{w}}.$$\\

We would like to remark that for a tamed $4$-manifold, $\mathcal{H}_g^+(M)$ contains a symplectic form
$$\omega_1=F+d_J^-(v+\bar{v}),$$
 where $v\in\Omega_J^{0,1}(M)$. Let
$$\widetilde{\omega}_1=\omega_1-d(v+\bar{v})=F-d_J^+(v+\bar{v}).$$
Assume that
$$\int_MF^2=2.$$
Then
$$\int_M\widetilde{\omega}_1\wedge F=2(1+a),\ a\geq0.$$
If $a=0$, then $\widetilde{\omega}_1=\omega_1=F$.
Hence if $P_g^+(\psi)\perp (H^+_g\cap H_J^+)$ with respect to the integration, it follows that $P_g^+(\psi)\perp F$ with respect to the integration. In general,
for a closed almost Hermitian $4$-manifold $(M,g,J,F)$,
$P_g^+(\psi)\perp (H^+_g\cap H_J^+)$, it does not imply that $P_g^+(\psi)\perp F$ with respect to the integration.\\

 Similar to Lemma 3.2 in \cite{TaWaZhZh}, we have the following Lemma:
   \begin{lem}\label{0151} (i) If $(M,g,J,F)$ is a closed almost Hermitian $4$-manifold, then
  $\mathcal{D}^+_J$ has closed range.\\
  (ii)If  $(M,g,J,F)$ is a closed almost Hermitian $4$-manifold tamed by the symplectic form $\omega=F+d_J^-(v+\bar{v})$, where $v\in\Omega_J^{0,1}(M)$,
  then $\widetilde{\mathcal{D}}^+_J$ has closed range.
    \end{lem}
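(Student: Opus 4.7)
My plan is to handle parts (i) and (ii) in parallel, since they differ only by the lower-order correction that turns $\mathcal{W}$ into $\widetilde{\mathcal{W}}$. The starting observation is a factorization coming directly from Definitions \ref{0103} and \ref{0104}: the auxiliary form $\eta^1_f$ is chosen precisely so that $d_J^-\mathcal{W}(f)=0$, so
$$\mathcal{D}^+_J(f)\;=\;d\mathcal{W}(f)\;=\;d_J^+\mathcal{W}(f),$$
and similarly $\widetilde{\mathcal{D}}^+_J(f)=d_J^+\widetilde{\mathcal{W}}(f)$. Because Lejmi's operator $P$ of Lemma \ref{2130} is strongly elliptic, it has a pseudodifferential inverse of order $-2$ on $(\ker P)^\perp$; the implicit rule $\eta^1_f=-P^{-1}(d_J^-Jdf)$ is therefore of order $-1$ in $f$, so that $f\mapsto d^*(\eta^1_f+\bar\eta^1_f)$ is of order $0$ and $\mathcal{W}$ is a genuine first-order operator with leading part $Jd$.

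The central step is then to check that $\mathcal{D}^+_J:L^2_2(M)_0\to\Lambda^{1,1}_J\otimes L^2(M)$ is overdetermined elliptic. A direct symbol computation gives
$$\sigma(\mathcal{D}^+_J)(\xi)(c)\;=\;c\,(\xi\wedge J\xi)\in\Lambda^{1,1}_J,\qquad c\in\mathbb{R}.$$
Since $J$ has no real eigenvectors, the covectors $\xi$ and $J\xi$ are $\mathbb{R}$-linearly independent for every $\xi\neq0$, so $\xi\wedge J\xi\neq0$ and the symbol is injective. Overdetermined ellipticity then produces the G{\aa}rding-type estimate
$$\|f\|_{L^2_2(M)}\;\le\;C\bigl(\|\mathcal{D}^+_J(f)\|_{L^2(M)}+\|f\|_{L^2(M)}\bigr),\qquad f\in L^2_2(M)_0.$$
For (ii), the additional pieces $-*_g(df\wedge d_J^-(v+\bar v))+d^*(\eta^2_f+\bar\eta^2_f)$ appearing in $\widetilde{\mathcal{W}}$ are of order at most one and leave the principal symbol unchanged, so the same estimate applies to $\widetilde{\mathcal{D}}^+_J$.

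Closed range now follows by a standard compactness/contradiction argument: if $\mathrm{Im}(\mathcal{D}^+_J)$ were not closed, one could select $f_i\in(\ker\mathcal{D}^+_J)^\perp$ with $\|f_i\|_{L^2_2}=1$ and $\mathcal{D}^+_J(f_i)\to 0$ in $L^2$; Rellich compactness $L^2_2\hookrightarrow L^2$ extracts an $L^2$-convergent subsequence, and applying G{\aa}rding to the differences $f_i-f_j$ upgrades this to convergence in $L^2_2$ to some limit $f$ with $\mathcal{D}^+_J f=0$ and $f\in(\ker\mathcal{D}^+_J)^\perp$, forcing $f=0$ against $\|f\|_{L^2_2}=1$. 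The main obstacle in making this precise is the symbol computation: one has to control carefully the implicitly defined piece $\eta^1_f$ (and $\eta^2_f$), which enters $\mathcal{W}$ through the pseudodifferential inverse of the elliptic operator $P$ of Lemma \ref{2130}, in order to be sure that it really contributes only to lower-order symbols, so that overdetermined ellipticity reduces to the elementary fact that $\xi\wedge J\xi\neq 0$. A parallel route avoiding pseudodifferential calculus would apply Lemma \ref{311} directly to $d_J^+\mathcal{W}(f_i)=\mathcal{D}^+_J(f_i)$ to obtain $L^2_1$-boundedness of $\mathcal{W}(f_i)$, then unwrap $\mathcal{W}$ using the overdetermined ellipticity of $Jd$ to get $L^2_2$-boundedness of $f_i$.
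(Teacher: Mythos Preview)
Your argument is correct and lands on the same endpoint as the paper: an elliptic estimate of G{\aa}rding type for $\mathcal{D}^+_J$ (resp.\ $\widetilde{\mathcal{D}}^+_J$) on $L^2_2(M)_0$, from which closed range follows by the standard Rellich/weak-compactness routine. The route you take, however, is genuinely different in its mechanics. You package the argument as a principal-symbol computation: after observing that $P^{-1}\circ d_J^-Jd$ is of order $-1$ (because the second-order symbol of $d_J^-Jd$ vanishes, $\xi\wedge J\xi$ being of type $(1,1)$), you read off $\sigma(\mathcal{D}^+_J)(\xi)=\xi\wedge J\xi$ and invoke overdetermined ellipticity. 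The paper instead works by hand: it splits $\mathcal{D}^+_J(f)=2\sqrt{-1}\,\partial_J\bar\partial_J f+d_J^+d^*(\eta^1_f+\bar\eta^1_f)$, uses the elliptic regularity of Lejmi's operator $P$ (Lemma~\ref{2130}) to control the $\eta^1_f$-term by $\|f\|_{L^2_2}$, and then appeals to interpolation to absorb the correction into the leading $\partial_J\bar\partial_J$-estimate. Your symbol-calculus approach is cleaner and makes transparent \emph{why} the correction is lower order; the paper's approach avoids pseudodifferential machinery at the cost of a somewhat informal estimate chain. Either way, the substance---that the $\eta^1_f$ (and $\eta^2_f$) contributions do not affect the top-order behaviour, so $\mathcal{D}^+_J$ inherits the ellipticity of $dJd$---is identical.
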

 \bp (Sketch) (i)
 Let $\{f_j\}$ be a sequence of real functions on $M$ in $L_2^2(M)_0$. By Definition \ref{0103},
  $$\mathcal{D}^+_J(f_i)=dJdf_i+dd^*(\eta_{f_i}^1+\bar{\eta}_{f_i}^1)$$
   $$d_J^-Jdf_i+d_J^-d^*(\eta_{f_i}^1+\bar{\eta}_{f_i}^1)=0,$$
   where $\eta_{f_i}^1\in\wedge_J^{0,1}\otimes L_2^2(M)$. Hence,
    $$\mathcal{D}^+_J(f_i)=d_J^+Jdf_i+d_J^+d^*(\eta_{f_i}^1+\bar{\eta}_{f_i}^1)
    =2\sqrt{-1}\partial_J\bar{\partial}_Jf_i+d_J^+d^*(\eta_{f_i}^1+\bar{\eta}_{f_i}^1).$$
   By Lemma \ref{2130} and regularity of linear elliptic operator \cite{Au},
    \begin{eqnarray*}
    \int_M|\mathcal{D}^+_Jf_i|dvol_g&\geq&C_1\int_M|\partial_J\bar{\partial}_Jf_i|^2dvol_g-C_2\int_M|\nabla^2((\eta_{f_i}^1+\bar{\eta}_{f_i}^1))|^2dvol_g\\
    &\geq&C_1\|f_i\|_{L_2^2}^2-C_2\|\nabla^2 f_i\|_{L^2}^2.
     \end{eqnarray*}
  By Interpolation Inequalities \cite{Au}, it follows that
    \begin{eqnarray*}
  \|\mathcal{D}_J^+f_i\|_{L2}^2\geq C_3\|f_i\|_{L_2^2}^2-C_4,
     \end{eqnarray*}
   where $C_1$, $C_2$, $C_3$ and $C_4$ are all positive constants depending only on $(M,g)$.
   Since $\mathcal{D}_J^+(f_i)$ converges to $\psi\in \wedge_J^+\otimes L^2(M)$ in $L^2$ space, it follows that
   $f_i$ are bounded in $L_2^2$.

   As done in the proof of Lemma \ref{311}, there is a subsequence of $\{f_i\}$ converges (resp. weakly) in $L_1^2$ (resp. $L_2^2$) to some
   $f\in L_2^2(M)_0$ such that $\mathcal{D}_J^+(f)=\psi$. Thus, $\mathcal{D}_J^+$ has closed range.

(ii) Suppose that $\widetilde{\mathcal{D}}_J^+(f_j)$ converges to $\psi\in \wedge_J^{1,1}\otimes L^2(M)$ for $\{f_j\}\subset L_2^2(M)_0$.
By Definition \ref{0104},
$$\widetilde{\mathcal{D}}^+_J(f_j)=2\sqrt{-1}\partial_J\bar{\partial}_Jf_j-d_J^+*_g(df\wedge(v+\bar{v}))+d_J^+d^*(\eta_f+\bar{\eta}_f),$$
where
 $$v\in\Omega_J^{0,1}(M), \eta_f\in\wedge_J^{0,2}\otimes L_2^2(M), \omega_1=F+d_J^-(v+\bar{v})$$
 is the tamed symplectic form and $v, \eta_f$
satisfy the following equation:
$$-d_J^-*_g(df\wedge d_J^-(v+\bar{v}))+d_J^-d^*(\eta_f+\bar{\eta}_f)=0.$$
By using similar method in the proof of (i), it is easy to see that there is
$f\in L_2^2(M)_0$ such that $\psi=\widetilde{\mathcal{D}}^+_J(f)$. Thus,  $\widetilde{\mathcal{D}}_J^+$ has closed range.
 \ep

\section{The intersection pairing on weakly $\widetilde{\mathcal{D}}_J^+$ (resp. $\mathcal{D}_J^+$)-closed $J$-(1,1) forms}
 \setcounter{equation}{0}
 In this section, we consider the intersection pairing on weakly $\widetilde{\mathcal{D}}_J^+$ (resp. $\mathcal{D}_J^+$)-closed $J$-(1,1) forms on a
 closed almost Hermitian $4$-manifold $(M,g,J,F)$.
 As done in compact complex surfaces \cite{BaPeVa}, there is Signature Theorem (cf. \cite[Theorem2.4]{TaWaZhZh})
 for closed almost complex $4$-manifolds as follows:
 \begin{prop}\label{01}
 Let $(M,J)$ be a closed almost complex $4$-manifold. Then the cup-product form on $H^2(M,\textrm{R})$ restricted to $H^+_J$
 is nondegenerate of type $(b^+-h^-_J,b^{-})$.
 \end{prop}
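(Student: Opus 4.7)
The plan is to reduce the signature computation on $H^+_J$ to an orthogonality statement plus a positivity statement, and then subtract. The key algebraic fact in complex (real) dimension $2$ is that $\Omega^{1,1}_J \wedge (\Omega^{2,0}_J + \Omega^{0,2}_J) \subset \Omega^{3,1}_J + \Omega^{1,3}_J = 0$, so a $J$-invariant $2$-form wedged with a $J$-anti-invariant $2$-form is pointwise zero. Applied to representatives $\alpha \in \mathcal{Z}^+_J$ and $\beta \in \mathcal{Z}^-_J$, this yields $\int_M \alpha \wedge \beta = 0$, i.e.\ $H^+_J \perp H^-_J$ with respect to the cup-product form.

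Step two: since Proposition \ref{2p1} gives $H^2(M;\mathbb{R}) = H^+_J \oplus H^-_J$ and the cup-product pairing is nondegenerate on $H^2(M;\mathbb{R})$, the orthogonality of the two summands forces the restriction to each summand to be nondegenerate as well. In particular the restriction to $H^+_J$ is nondegenerate, which is the first half of the claim.

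Step three: determine the signature of the cup-product restricted to $H^-_J$. From the relations (\ref{2eq18})--(\ref{2eq19}) one has $\Lambda^-_J \subset \Lambda^+_g$, hence any $\mathfrak{a} \in H^-_J$ admits a representative $\alpha \in \mathcal{Z}^-_J \subset \mathcal{Z}^+_g$, and then
\begin{equation*}
\mathfrak{a} \cup \mathfrak{a} \;=\; \int_M \alpha \wedge \alpha \;=\; \int_M \alpha \wedge \ast_g\alpha \;=\; \|\alpha\|_g^2 \;\geq\; 0,
\end{equation*}
with equality only when $\alpha \equiv 0$, i.e.\ $\mathfrak{a}=0$. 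So the cup-product on $H^-_J$ is positive definite of type $(h^-_J, 0)$.

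Step four: combine. Because the decomposition $H^2(M;\mathbb{R}) = H^+_J \oplus H^-_J$ is cup-product orthogonal, the signatures add. The total signature of the intersection form on $H^2(M;\mathbb{R})$ is $(b^+, b^-)$, so the signature on $H^+_J$ must equal $(b^+ - h^-_J, \, b^-)$. Note that this is consistent with $\dim H^+_J = h^+_J = b^+ + b^- - h^-_J$. The only nontrivial point in the argument is the cup-orthogonality of $H^+_J$ and $H^-_J$, and that rests solely on the dimension count $\Omega^{3,1}_J = \Omega^{1,3}_J = 0$ on a real four-manifold; the rest is routine, so I do not expect any real obstacle.
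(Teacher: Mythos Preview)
Your argument is correct. The paper itself omits the proof and simply cites \cite[Theorem 2.4]{TaWaZhZh}, so there is no in-paper proof to compare against; your approach---pointwise vanishing of $\Lambda^{1,1}_J\wedge\Lambda^{-}_J$ to get cup-orthogonality, positive-definiteness on $H^-_J$ via $\Lambda^-_J\subset\Lambda^+_g$, then subtracting signatures---is the standard one (the almost-complex analogue of the classical argument for compact complex surfaces in \cite{BaPeVa}) and is almost certainly what the cited reference does as well.
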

 \bp
 It is omitted. See the proof of Theorem 2.4 in \cite{TaWaZhZh}.
 \ep
We now investigate the intersection pairing
 on weakly $\widetilde{\mathcal{D}}_J^+$ (resp. $\mathcal{D}_J^+$)-closed $J$-(1,1) forms (c.f. \cite{Bu,TaWaZhZh,Zh}).

Under some condition, we can solve $(\mathcal{\widetilde{W}},d_J^-)$-problem (or $(\mathcal{W},d_J^-)$-problem)
as solving $\bar{\partial}$-problem in classical complex analysis \cite{Ho0,Ho}. Hence we have the following proposition
(cf. \cite[Lemma 3.7]{TaWaZhZh}):
\begin{prop}\label{0110}
Suppose that  $(M,g,J,F)$ is a closed almost Hermitian $4$-manifold.
  If
  $$\psi\in(\Lambda^{1,1}_J\otimes L^2(M))^0_{\widetilde{w}}
  \  ({\rm or}\  \psi\in(\Lambda^{1,1}_J\otimes L^2(M))^0_{w} ),$$
   then $\psi$ could be written as
  $$\psi=C_\psi F+\beta_\psi+\widetilde{\mathcal{D}}^+_J(f_\psi)+d^-_g(\gamma_\psi), $$
 $$\  ({\rm or}\ \psi=C_\psi F+\beta_\psi+{\mathcal{D}}^+_J(f_\psi)+d^-_g(\gamma_\psi)),$$
  where $C_\psi$ is a constant,
  $$\beta_{\psi}\in\mathcal{H}^-_g(M), f_\psi\in L^2_2(M)_0,\gamma_\psi\in\Lambda^1_\mathbb{R}\otimes L^2_1(M)$$ and
    $$d^-_g(\gamma_\psi)\in(\Lambda^{1,1}_J\otimes L^2(M))^0_{\widetilde{w}}\  ({\rm or}\  d^-_g(\gamma_\psi)\in(\Lambda^{1,1}_J\otimes L^2(M))^0_{w}).$$
 Let $\psi'=\psi-d^-_g(\gamma_\psi)$. Then
   $$
     \int_M\psi^2=\int_M\psi'^2 -\|d^-_g(\gamma_\psi)\|^2_{L^2(M)},
  $$
  and there is a smooth sequence of $\{f_{\psi,j}\}\subset C^\infty(M)_0$, where
  $$C^{\infty}(M)_0:=\{f\in C^{\infty}(M):\int_Mf d\mu_g=0\}$$
  such that
  $$\psi'_j=C_\psi F+\beta_\psi+\widetilde{\mathcal{D}}^+_J(f_{\psi,j})\  ({\rm or} \ \psi'_j=C_\psi F+\beta_\psi+{\mathcal{D}}^+_J(f_{\psi,j}))$$
  is converging to $\psi'$ in $L^2(M)$.
  \end{prop}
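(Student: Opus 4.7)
Plan of proof. The approach is to peel off the components of $\psi$ using the fiberwise bundle split (\ref{2eq18}), Hodge theory on anti-self-dual forms, and the closed-range property of $\widetilde{\mathcal{D}}^+_J$ from Lemma~\ref{0151}. I treat the $\widetilde{w}$ case in detail; the $w$ case proceeds identically up to the stronger orthogonality built into the superscript-$0$ subspace.

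First, using $\Lambda^+_J = \mathbb{R}\cdot F \oplus \Lambda^-_g$ fiberwise, write $\psi = hF + \alpha$ with $h \in L^2(M)$ and $\alpha \in \Lambda^-_g \otimes L^2(M)$. The Hodge decomposition $\Omega^-_g = \mathcal{H}^-_g \oplus d^-_g \Omega^1_\mathbb{R}$ gives $\alpha = \beta_\psi + d^-_g\tilde\gamma$ with $\beta_\psi \in \mathcal{H}^-_g$ and $\tilde\gamma \in \Omega^1_\mathbb{R}\otimes L^2_1(M)$; normalizing $\int_M d\mu_g = 1$, set $C_\psi := \int_M h\, d\mu_g$ so that $h - C_\psi \in L^2_0(M)$. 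The crucial step is to produce $f_\psi \in L^2_2(M)_0$ and $\gamma' \in \Omega^1_\mathbb{R}\otimes L^2_1(M)$ with $\widetilde{\mathcal{D}}^+_J(f_\psi) = (h-C_\psi) F + d^-_g\gamma'$; then $\gamma_\psi := \tilde\gamma - \gamma'$ yields the desired decomposition. Every element of $\mathrm{Im}\,\widetilde{\mathcal{D}}^+_J$ has this schematic shape, because $\widetilde{\mathcal{D}}^+_J(f) = d\widetilde{\mathcal{W}}(f)$ is exact, and Stokes applied against harmonic anti-self-dual forms shows that its $\Lambda^-_g$-component is $L^2$-orthogonal to $\mathcal{H}^-_g$, hence lies in $d^-_g\Omega^1_\mathbb{R}$. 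Closed range of $\widetilde{\mathcal{D}}^+_J$ from Lemma~\ref{0151}, combined with Lejmi's ellipticity (Lemma~\ref{2130}) and a Fredholm duality argument, then reduces solvability to a finite-dimensional orthogonality condition on $h-C_\psi$ matching precisely the assumption $P^+_g(\psi) \perp H^+_g \cap H^+_J$ (and its strengthening by $\perp F$ in the $w$ case) built into the superscript-$0$ subspace. That $d^-_g\gamma_\psi$ itself lies in the superscript-$0$ space is then automatic: each of the other three summands is weakly $\widetilde{\mathcal{D}}^+_J$-closed, and $d^-_g\gamma_\psi$ has vanishing $P^+_g$.

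For the $L^2$ identity, expand $\int_M\psi^2 = \int_M\psi'^2 + 2\int_M\psi'\wedge d^-_g\gamma_\psi + \int_M(d^-_g\gamma_\psi)^2$, where the last term equals $-\|d^-_g\gamma_\psi\|^2_{L^2}$ by anti-self-duality. The cross-term contributions from $C_\psi F$ and $\beta_\psi$ vanish by self-dual-versus-anti-self-dual pointwise duality and by $L^2$-orthogonality of $\mathcal{H}^-_g$ with $d^-_g\Omega^1_\mathbb{R}$, respectively; the remaining contribution from $\widetilde{\mathcal{D}}^+_J(f_\psi)$ is killed by refining $f_\psi$ within its kernel coset to minimize $\|P^-_g\widetilde{\mathcal{D}}^+_J(f_\psi)\|^2_{L^2}$, which via a standard variational argument forces $d^-_g\gamma' \perp_{L^2} d^-_g\gamma_\psi$. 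The smooth approximation then follows from density of $C^\infty(M)_0$ in $L^2_2(M)_0$ and continuity of $\widetilde{\mathcal{D}}^+_J : L^2_2(M)_0 \to \Lambda^{1,1}_J\otimes L^2(M)$: choose $f_{\psi,j}\in C^\infty(M)_0$ with $f_{\psi,j}\to f_\psi$ in $L^2_2$, whence $\widetilde{\mathcal{D}}^+_J(f_{\psi,j}) \to \widetilde{\mathcal{D}}^+_J(f_\psi)$ in $L^2$.

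The main obstacle is the analytic existence of $f_\psi$, namely identifying the cokernel of the $F$-coefficient map $f \mapsto h_f$ precisely with the finite-dimensional space annihilated by the orthogonality condition defining the superscript-$0$ subspace. This plays the role of a $\bar\partial$-problem in classical complex analysis, relying on Lejmi's ellipticity (Lemma~\ref{2130}) together with the closed-range result of Lemma~\ref{0151}, in the spirit of the $(\widetilde{\mathcal{W}},d^-_J)$-problem machinery to be developed in the appendix.
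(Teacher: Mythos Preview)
Your approach is essentially the paper's: split $\psi$ via $\Lambda^+_J=\mathbb{R}F\oplus\Lambda^-_g$, solve for $f_\psi$ so that $P^+_g(\psi-C_\psi F)=P^+_g\widetilde{\mathcal{D}}^+_J(f_\psi)$ (the paper invokes Lemma~\ref{65} here, which is exactly the $(\widetilde{\mathcal{W}},d^-_J)$-machinery you allude to), then read off $\beta_\psi+d^-_g\gamma_\psi$ as the anti-self-dual remainder via the Hodge decomposition of $\Omega^-_g$.

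The one place you overcomplicate matters is the cross term $\int_M\widetilde{\mathcal{D}}^+_J(f_\psi)\wedge d^-_g\gamma_\psi$. You propose refining $f_\psi$ within its coset by a variational argument to force $d^-_g\gamma'\perp_{L^2}d^-_g\gamma_\psi$; but $\gamma_\psi$ itself depends on this choice of $f_\psi$, so the minimization is circular as stated, and the first-order condition only gives orthogonality to the image of $P^-_g\widetilde{\mathcal{D}}^+_J$ on the relevant kernel, not to $d^-_g\gamma_\psi$ itself. The paper avoids this entirely: you have already shown that $d^-_g\gamma_\psi\in(\Lambda^{1,1}_J\otimes L^2(M))^0_{\widetilde w}$, and by \emph{definition} of weak $\widetilde{\mathcal{D}}^+_J$-closedness this means $\int_M d^-_g\gamma_\psi\wedge\widetilde{\mathcal{D}}^+_J(f)=0$ for every $f$, in particular for $f=f_\psi$. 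So the cross term vanishes for free, with no adjustment of $f_\psi$ needed.
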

  \bp We may assume that $$\int_MF^2=2.$$
Set
  $$C_{\psi}:=\frac{1}{2}\int_M\psi\wedge F. $$
Note that
  $$\Lambda^+_g=\mathbb{R}\cdot F\oplus \Lambda^-_J,\,\,\, \Lambda^+_J=\mathbb{R}\cdot F\oplus \Lambda^-_g,$$
  and $F$ is weakly $\widetilde{\mathcal{D}}^+_J$-closed. Since $\psi\in(\wedge_J^{1,1}\otimes L^2(M))_{\widetilde{w}}^0$, obviously,  $P^+_g(\psi-C_{\psi}F)$
  is orthogonal to $\mathcal{H}^+_g \cap H^+_J$ with respect to the integration. Hence $\psi-C_{\psi}F$ is orthogonal to $F$ with respect to the integration,
  and $(\psi-C_{\psi}F)\perp\Omega_J^-(M)$.
  By Hodge decomposition  \cite{DoKr} $$P_g^+(\psi-C_{\psi}F)=d_g^+\eta_\psi$$ for some $\eta_\psi\in\Omega_{\mathbb{R}}^1(M)$, and
  $d_J^-\eta_\psi=0$. By  Lemma  \ref{65},
  there exists $f_{\psi}\in L^2_2(M)_0$
  such that
  \begin{equation}\label{3eq15}
   P^+_g(\psi-C_{\psi}F)=P^+_g(\widetilde{\mathcal{D}}_J^+(f_{\psi}))
  \end{equation}
  holds in $\Lambda^{1,1}_J\otimes L^2(M)$.
  If $\psi$ is smooth, then $f_{\psi}$ is also smooth.
    By (\ref{3eq15}), we can find that
  $$
   \psi-C_{\psi}F-\widetilde{\mathcal{D}}^+_J(f_{\psi})=P^-_g(\psi-C_{\psi}F-\widetilde{\mathcal{D}}^+_J(f_{\psi}))\in\Lambda^-_g\otimes L^2(M),
   $$
  since $$P^+_g(\psi-C_{\psi}F-\widetilde{\mathcal{D}}^+_J(f_{\psi}))=0.$$
  By Hodge decomposition $$\Omega_g^-(M)=\mathcal{H}_g^-\oplus d_g^-(\Omega^1(M))$$ again, we obtain the following equation:
  $$
   \psi-C_{\psi}F-\widetilde{\mathcal{D}}^+_J(f_{\psi})=\beta_{\psi}+d^-_g(\gamma_\psi),
   $$
  where $$\beta_{\psi}\in\mathcal{H}^-_g(M),\  \gamma_\psi\in\Lambda^1_\mathbb{R}\otimes L^2_1(M).$$
   Hence,
  $$
  \psi=C_{\psi}F+\beta_{\psi}+d^-_g(\gamma_\psi)+\widetilde{\mathcal{D}}^+_J(f_{\psi}).
   $$
Since $$\psi,\  F, \ \beta_{\psi},\  \widetilde{\mathcal{D}}^+_J(f_{\psi})\in(\Lambda^{1,1}_J\otimes L^2(M))^0_{\widetilde{w}},$$
 we have
 $$d^-_g(\gamma_\psi)\in(\Lambda^{1,1}_J\otimes L^2(M))^0_{\widetilde{w}}$$
  is weakly $\widetilde{\mathcal{D}}_J^+$ closed.
  Set $$\psi':=\psi-d^-_g(\gamma_\psi)=C_{\psi}F+\beta_{\psi}+\widetilde{\mathcal{D}}^+_J(f_{\psi}).$$
  $\psi'$ is also in $(\Lambda^{1,1}_J\otimes L^2(M))^0_{\widetilde{w}}$.
  If $\psi$ is smooth,  both $\psi'$ and $f_{\psi}$ are smooth.
    Then, we have the following equation:
  \begin{equation}\label{smooth case}
  F\wedge(\psi'-C_{\psi}F-\widetilde{\mathcal{D}}^+_J(f_{\psi}))=F\wedge\beta_\psi=0.
  \end{equation}
   If $\psi$ is not smooth, in $\Lambda^{1,1}_J\otimes L^2(M)$,
   we still have
   $$
  \psi=C_{\psi}F+\beta_{\psi}+d^-_g(\gamma_\psi)+\widetilde{\mathcal{D}}^+_J(f_{\psi}),
   $$
   and
  $$ \psi'=C_{\psi}F+\beta_{\psi}+\widetilde{\mathcal{D}}^+_J(f_{\psi}),$$
  in the sense of currents.
     Since $d^-_g(\gamma_\psi)\in(\Lambda^{1,1}_J\otimes L^2(M))_{\widetilde{w}}^0$, it is easy to see that
     $$
     \int_M\psi'\wedge d^-_g(\gamma_\psi)=\int_M(C_\psi F+\beta_\psi+\widetilde{\mathcal{D}}^+_J(f_\psi))\wedge d^-_g(\gamma_\psi)=0,
     $$
     and
  \begin{eqnarray*}
     \int_M\psi^2=\int_M\psi'^2 -\|d^-_g(\gamma_\psi)\|^2_{L^2(M)}.
  \end{eqnarray*}
  By Lemma \ref{0151}, we can find a smooth sequence of $\{f_{\psi,j}\}\subset C^\infty(M)_0$ such that
  $\widetilde{\mathcal{D}}^+_J(f_{\psi,j})$ is converging to  $\widetilde{\mathcal{D}}^+_J(f_{\psi})$ in $L^2(M)$. Hence,
  this implies that
  $$\psi'_j=C_{\psi}F+\beta_{\psi}+\widetilde{\mathcal{D}}^+_J(f_{\psi,j})$$
  is converging to $\psi'$ in $L^2(M)$.

  If $\psi\in(\Lambda^{1,1}_J\otimes L^2(M))^0_{w}$, by the same method, we have the similar result.
  
  This completes the proof of Proposition \ref{0110}.
  \ep

 By Proposition \ref{0110}, we have the following lemma (c.f. \cite[Lemma 4]{Bu}, \cite[Lemma 3.8]{TaWaZhZh})
 which will be used to prove Proposition \ref{0112}:
   \begin{lem}\label{0111}
Suppose that  $(M,g,J,F)$ is a closed almost Hermitian $4$-manifold. If
$$\psi\in(\Lambda^{1,1}_J\otimes L^2(M))^0_{\widetilde{w}}\  (\ {or}\  \psi\in(\Lambda^{1,1}_J\otimes L^2(M))^0_{{w}}),$$
   then
  $$ (\int_MF\wedge\psi)^2\geq(\int_MF^2)(\int_M\psi^2) $$
  with equality if and only if
  $$\psi=C_{\psi}F+\widetilde{\mathcal{D}}^+_J(f)\ \ \ ({\it or}\ \ \  \psi=C_{\psi}F+{\mathcal{D}}^+_J(f))$$
   for some constant
   $$C_{\psi}=\frac{1}{2}\int_MF\wedge\psi$$
   and  $f\in L^2_2(M)_0$.
  \end{lem}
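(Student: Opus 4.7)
The plan is to apply Proposition \ref{0110} to decompose $\psi$ into its canonical pieces and then evaluate the cup-product $\int_M\psi^2$ by checking that all cross terms vanish. Concretely, write
$$\psi=C_\psi F+\beta_\psi+\widetilde{\mathcal{D}}^+_J(f_\psi)+d_g^-(\gamma_\psi)$$
as in Proposition \ref{0110}, and set $\psi'=\psi-d_g^-(\gamma_\psi)$, so that $\int_M\psi^2=\int_M\psi'^2-\|d_g^-(\gamma_\psi)\|_{L^2}^2$. First I would verify the claimed identity $C_\psi=\tfrac12\int_M F\wedge\psi$ by showing that $\int_M F\wedge\beta_\psi=0$, $\int_M F\wedge d_g^-(\gamma_\psi)=0$ (since $F$ is $g$-self-dual while $\beta_\psi$ and $d_g^-(\gamma_\psi)$ are $g$-anti-self-dual, and self-dual is orthogonal to anti-self-dual under $\int \alpha\wedge\beta=-\langle\alpha,\beta\rangle$), together with $\int_M F\wedge\widetilde{\mathcal{D}}^+_J(f_\psi)=0$ (as $F$ is weakly $\widetilde{\mathcal{D}}^+_J$-closed, directly from Definition \ref{0104}); hence $\int_M F\wedge\psi=C_\psi\int_M F^2=2C_\psi$ under the normalization $\int_M F^2=2$.

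Next I would expand $\int_M\psi'^2$ and show the three mixed terms all vanish. The terms $\int_M F\wedge\beta_\psi$ and $\int_M F\wedge\widetilde{\mathcal{D}}^+_J(f_\psi)$ vanish as above, and the remaining cross term
$$\int_M\beta_\psi\wedge\widetilde{\mathcal{D}}^+_J(f_\psi)=\int_M\beta_\psi\wedge d\widetilde{\mathcal{W}}(f_\psi)=0$$
by Stokes' theorem, using that $\beta_\psi\in\mathcal{H}_g^-$ is smooth and $d$-closed. Thus
$$\int_M\psi'^2=C_\psi^2\int_M F^2+\int_M\beta_\psi^2+\int_M\widetilde{\mathcal{D}}^+_J(f_\psi)^2.$$
Now $\int_M\beta_\psi^2=-\|\beta_\psi\|_{L^2}^2$ because $\beta_\psi$ is anti-self-dual, and $\int_M\widetilde{\mathcal{D}}^+_J(f_\psi)^2=0$ again by Stokes applied to $d(\widetilde{\mathcal{W}}(f_\psi)\wedge d\widetilde{\mathcal{W}}(f_\psi))$. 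Combining everything yields
$$\Bigl(\int_M F\wedge\psi\Bigr)^2-\Bigl(\int_M F^2\Bigr)\Bigl(\int_M\psi^2\Bigr)=4C_\psi^2-2\bigl(2C_\psi^2-\|\beta_\psi\|^2-\|d_g^-(\gamma_\psi)\|^2\bigr)=2\|\beta_\psi\|_{L^2}^2+2\|d_g^-(\gamma_\psi)\|_{L^2}^2\geq 0,$$
with equality precisely when $\beta_\psi=0$ and $d_g^-(\gamma_\psi)=0$, i.e., $\psi=C_\psi F+\widetilde{\mathcal{D}}^+_J(f_\psi)$.

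The main obstacle is the low regularity: $f_\psi\in L_2^2(M)_0$ and $\widetilde{\mathcal{W}}(f_\psi)\in\Lambda^1_\mathbb{R}\otimes L_1^2(M)$, so the Stokes arguments used to kill $\int_M\beta_\psi\wedge\widetilde{\mathcal{D}}^+_J(f_\psi)$ and $\int_M\widetilde{\mathcal{D}}^+_J(f_\psi)^2$ cannot be applied pointwise. This is handled by the last clause of Proposition \ref{0110}: approximate by a smooth sequence $f_{\psi,j}\in C^\infty(M)_0$ with $\widetilde{\mathcal{D}}^+_J(f_{\psi,j})\to\widetilde{\mathcal{D}}^+_J(f_\psi)$ in $L^2$, so that $\psi'_j=C_\psi F+\beta_\psi+\widetilde{\mathcal{D}}^+_J(f_{\psi,j})\to\psi'$ in $L^2$; prove all the integral identities for the smooth $\psi'_j$, and pass to the limit by continuity of the $L^2$-pairing. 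The $\mathcal{D}^+_J$-case follows verbatim by replacing $\widetilde{\mathcal{D}}^+_J$ with $\mathcal{D}^+_J$ throughout and invoking the corresponding parts of Proposition \ref{0110}.
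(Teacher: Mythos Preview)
Your proposal is correct and follows essentially the same approach as the paper, which simply refers to the proof of Lemma~3.8 in \cite{TaWaZhZh} (the light cone lemma): you make that argument explicit by using the decomposition from Proposition~\ref{0110}, killing the cross terms, and reading off the reverse Cauchy--Schwarz from the negative-definite contributions $-\|\beta_\psi\|^2-\|d_g^-(\gamma_\psi)\|^2$. Your handling of the regularity issue via the smooth approximants $f_{\psi,j}$ is exactly what is needed.
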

\bp
See the proof of Lemma 3.8 in \cite{TaWaZhZh} which is the light cone lemma \cite[Lemma 2.6]{LiLi}.
\ep
By Lemma \ref{0111}, we have the following proposition (c.f. \cite[Proposition 5]{Bu},\cite[Proposition 3.9]{TaWaZhZh}):
   \begin{prop}\label{0112}
 Suppose that  $(M,g,J,F)$ is a closed almost Hermitian $4$-manifold. Let
 $$\psi_1,\,\,\,\psi_2\in(\Lambda^{1,1}_J\otimes L^2(M))^0_{\widetilde{w}}
 \ ({\rm or} \  \psi_1,\,\,\,\psi_2\in(\Lambda^{1,1}_J\otimes L^2(M))^0_{{w}})$$
   and satisfy
   $$\int_M\psi^2_j\geq 0 \,\,\,and\,\,\, \int_MF\wedge\psi_j\geq 0$$ for $j=1,2$.
  Then
  $$
  \int_M\psi_1\wedge\psi_2\geq(\int_M\psi_1^2)^{\frac{1}{2}}(\int_M\psi_2^2)^{\frac{1}{2}},
  $$
  with equality if and only if $\psi_1$ and $\psi_2$ are linearly dependent modulo the image of $\widetilde{\mathcal{D}}^+_J$ (or ${\mathcal{D}}^+_J$).
  \end{prop}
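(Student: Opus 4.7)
The plan is to apply Lemma \ref{0111} to the one-parameter family $\psi_1 + t\psi_2$, $t\in\mathbb{R}$, using that $(\Lambda^{1,1}_J\otimes L^2(M))^0_{\widetilde{w}}$ (resp.\ $(\Lambda^{1,1}_J\otimes L^2(M))^0_{w}$) is a real vector space and therefore closed under such linear combinations. With the standard normalization $\int_M F^2 = 2$, set $a_j := \int_M F\wedge\psi_j$, $b_j := \int_M\psi_j^2$, $c := \int_M\psi_1\wedge\psi_2$; the hypotheses give $a_j,b_j\ge 0$, and Lemma \ref{0111} applied separately to $\psi_j$ gives $a_j^2\ge 2b_j$, hence $a_j\ge\sqrt{2b_j}$. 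Applied to $\psi_1+t\psi_2$, Lemma \ref{0111} produces the non-negative real quadratic
$$\phi(t) := (a_2^2 - 2b_2)\,t^2 + 2(a_1a_2 - 2c)\,t + (a_1^2 - 2b_1) \ge 0, \quad t\in\mathbb{R}.$$

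In the generic case $a_2^2 - 2b_2 > 0$, non-negativity of $\phi$ forces the discriminant inequality $(a_1a_2 - 2c)^2\le(a_1^2 - 2b_1)(a_2^2 - 2b_2)$, so $2c\ge a_1a_2 - \sqrt{(a_1^2 - 2b_1)(a_2^2 - 2b_2)}$. The key algebraic identity
$$(a_1a_2 - 2\sqrt{b_1b_2})^2 - (a_1^2 - 2b_1)(a_2^2 - 2b_2) = 2(a_1\sqrt{b_2} - a_2\sqrt{b_1})^2 \ge 0,$$
combined with $a_1a_2\ge 2\sqrt{b_1b_2}$ (from $a_j\ge\sqrt{2b_j}$), yields $a_1a_2 - 2\sqrt{b_1b_2} \ge \sqrt{(a_1^2 - 2b_1)(a_2^2 - 2b_2)}$, and therefore $c\ge\sqrt{b_1b_2}$. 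The degenerate sub-case $a_2^2 = 2b_2$ makes $\phi$ linear in $t$ and forces $a_1a_2 = 2c$, from which the bound follows directly via the equality clause of Lemma \ref{0111} applied to $\psi_2$.

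For the equality clause, $c = \sqrt{b_1b_2}$ forces simultaneous equality in both the discriminant step and in the identity above, so $\phi$ has a unique double root $t_0$ and $a_1\sqrt{b_2} = a_2\sqrt{b_1}$. A short computation using these two relations gives $t_0 = -a_1/a_2$ whenever $a_2\neq 0$, so $a_1 + t_0a_2 = 0$; applying the equality clause of Lemma \ref{0111} to $\psi_1 + t_0\psi_2$ then yields $\psi_1 + t_0\psi_2 = \tfrac{1}{2}(a_1 + t_0a_2)F + \widetilde{\mathcal{D}}^+_J(f) = \widetilde{\mathcal{D}}^+_J(f)$ for some $f\in L^2_2(M)_0$, which is the desired linear dependence modulo $\mathrm{Im}\,\widetilde{\mathcal{D}}^+_J$. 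Degenerate sub-cases ($a_2 = 0$ or $b_2 = 0$) each reduce, via Lemma \ref{0111}, to $\psi_2\in\mathrm{Im}\,\widetilde{\mathcal{D}}^+_J$, giving trivial linear dependence; the $\mathcal{D}^+_J$-version is verbatim identical. The main obstacle is this last step of the equality analysis: one must verify that the multiple of $F$ produced by Lemma \ref{0111} at $t = t_0$ actually vanishes, which is exactly what the AM-GM equality $a_1\sqrt{b_2} = a_2\sqrt{b_1}$ supplies via the explicit formula for $t_0$, upgrading a statement in the affine space $F + \mathrm{Im}\,\widetilde{\mathcal{D}}^+_J$ to genuine linear dependence in the quotient.
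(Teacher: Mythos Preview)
Your argument is correct and is essentially the standard proof---applying Lemma~\ref{0111} to the pencil $\psi_1+t\psi_2$ and analyzing the resulting non-negative quadratic in $t$---which is precisely what the paper defers to via the reference \cite[Proposition~3.9]{TaWaZhZh}, itself modeled on Buchdahl \cite[Proposition~5]{Bu}. One minor omission: you establish only the forward implication of the equality clause; the converse (linear dependence modulo $\mathrm{Im}\,\widetilde{\mathcal{D}}^+_J$ implies equality) follows in two lines from the fact that $\widetilde{\mathcal{D}}^+_J$-exact forms pair trivially with weakly $\widetilde{\mathcal{D}}^+_J$-closed ones, together with the hypothesis $a_j\ge 0$ forcing the proportionality constant to be non-negative.
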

\bp
See the proof of Proposition 3.9 in \cite{TaWaZhZh}.
\ep
By Proposition \ref{0112}, it is easy to obtain the following corollary (c.f. \cite[Corollary 6]{Bu},\cite[Corollary 3.10]{TaWaZhZh}):
   \begin{col}\label{0113}
Suppose that  $(M,g,J,F)$ is a closed almost Hermitian $4$-manifold. If
 $$\psi\in(\Lambda^{1,1}_J\otimes L^2(M))^0_{\widetilde{w}}\ (\ or \ \psi\in(\Lambda^{1,1}_J\otimes L^2(M))^0_{{w}})$$
  satisfies
 $$\int_M\psi^2>0 \,\,\,and \,\,\,\int_M\psi\wedge F>0,$$
 then $$\int_M\psi\wedge\varphi>0$$ for any other such form
 $$\varphi\in(\Lambda^{1,1}_J\otimes L^2(M))^0_{\widetilde{w}}\  ({\rm{or}} \ \varphi\in(\Lambda^{1,1}_J\otimes L^2(M))^0_{w})$$
  satisfying $$\int_M\varphi^2\geq 0\,\,\, and \,\,\,\int_M\varphi\wedge F>0.$$
 \end{col}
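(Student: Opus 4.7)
The plan is to deduce this from the reverse Cauchy--Schwarz inequality of Proposition \ref{0112} applied to the pair $\psi_1=\psi$, $\psi_2=\varphi$. Both forms lie in the same weakly closed space $(\Lambda^{1,1}_J\otimes L^2(M))^0_{\widetilde{w}}$ (or the $w$-variant), and the stated hypotheses are strictly stronger than the conditions $\int_M\psi_j^2\geq0$, $\int_M F\wedge\psi_j\geq0$ required by Proposition \ref{0112}. Therefore
$$\int_M\psi\wedge\varphi\;\geq\;\Bigl(\int_M\psi^2\Bigr)^{1/2}\Bigl(\int_M\varphi^2\Bigr)^{1/2}\geq 0.$$
When $\int_M\varphi^2>0$, the right-hand side is strictly positive since $\int_M\psi^2>0$ by assumption, and the corollary follows immediately.

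The only subtle case is the boundary situation $\int_M\varphi^2=0$, in which the displayed inequality degenerates to the non-strict bound $\int_M\psi\wedge\varphi\geq 0$. Here I would argue by contradiction: assume $\int_M\psi\wedge\varphi=0$. Then equality holds throughout in Proposition \ref{0112}, and the equality clause forces $\psi$ and $\varphi$ to be linearly dependent modulo the image of $\widetilde{\mathcal{D}}^+_J$, i.e.\ there exist constants $(a,b)\neq(0,0)$ and $h\in L^2_2(M)_0$ with $a\psi+b\varphi=\widetilde{\mathcal{D}}^+_J(h)$. If $a=0$, then $\varphi$ is a scalar multiple of $\widetilde{\mathcal{D}}^+_J(h)$; since $F$ is weakly $\widetilde{\mathcal{D}}^+_J$-closed, this yields $\int_M\varphi\wedge F=0$, contradicting the hypothesis $\int_M\varphi\wedge F>0$. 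If $a\neq 0$, write $\psi=c\varphi+\widetilde{\mathcal{D}}^+_J(h')$; expanding
$$\int_M\psi^2=c^2\int_M\varphi^2+2c\int_M\varphi\wedge\widetilde{\mathcal{D}}^+_J(h')+\int_M\bigl(\widetilde{\mathcal{D}}^+_J(h')\bigr)^2$$
and using that $\varphi$ is weakly $\widetilde{\mathcal{D}}^+_J$-closed (killing the cross term) together with the fact that $\widetilde{\mathcal{D}}^+_J(h')$ is itself weakly $\widetilde{\mathcal{D}}^+_J$-closed by Remark \ref{0108} (killing its self-integral), one obtains $\int_M\psi^2=0$, contradicting $\int_M\psi^2>0$.

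I expect the main obstacle to be the clean execution of the equality case: one must confirm that elements in the image of $\widetilde{\mathcal{D}}^+_J$ are themselves weakly $\widetilde{\mathcal{D}}^+_J$-closed, which follows from Remark \ref{0108} since $\widetilde{\mathcal{D}}^+_J(h')=d_J^+\widetilde{\mathcal{W}}(h')$ is of the form $d_J^+$ applied to a real $1$-form. Once this is in hand, the vanishing of cross and square terms is automatic and the contradictions drop out. The proof of the $\mathcal{D}^+_J$-variant is word-for-word identical with $\widetilde{\mathcal{D}}^+_J$ replaced by $\mathcal{D}^+_J$ throughout, since the same weak closure and image properties hold in that setting.
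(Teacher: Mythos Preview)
Your proof is correct and follows exactly the intended route via Proposition \ref{0112}, handling the strict case $\int_M\varphi^2>0$ directly and the boundary case $\int_M\varphi^2=0$ through the equality clause; the paper itself omits the argument and simply cites \cite[Corollary 6]{Bu} and \cite[Corollary 3.10]{TaWaZhZh}, where this is precisely the method used. The only quibble is a citation: the weak $\widetilde{\mathcal{D}}^+_J$-closedness of forms $d_J^+(u+\bar u)$ (and hence of $\widetilde{\mathcal{D}}^+_J(h')=d_J^+\widetilde{\mathcal{W}}(h')$) is recorded not in Remark \ref{0108}, which treats $\mathcal{D}^+_J$, but in the paragraph immediately following Definition \ref{0104}.
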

By Corollary \ref{0113}, Lemma \ref{65} and Hahn-Banach Theorem \cite{Yo}, using the method developed by Sullivan \cite{Su},
we have the following key lemma (c.f. \cite[Lemma 7]{Bu}, \cite[Lemma 3.12]{TaWaZhZh}):
  \begin{lem}\label{0115}
  (1) Let $(M,J)$ be a closed tamed almost complex $4$-manifold. Suppsose that
  $\varphi\in (\wedge_J^{1,1}\otimes L^2(M))_{\widetilde{w}}^0$ and satisfies
  $$\int_M\varphi\wedge F\geq0$$
  and
  $$\int_M\varphi^2\geq0.$$
  For each $\varepsilon>0$, there is a positive $J$-$(1,1)$ form $p_\varepsilon$ and a function $f_\varepsilon$
  such that
  $$\|\varphi+\widetilde{\mathcal{D}}^+_J(f_\varepsilon)-p_\varepsilon\|_{L^2(M)}<\varepsilon.$$
  Moreover, $p_\varepsilon$ and $f_\varepsilon$ can be assumed to be smooth.

  (2) Suppose that $(M,g,J,F)$ is a closed almost Hermitian 4-manifold,
  where $F$ is weakly  ${\mathcal{D}}^+_J$-closed.
  Suppose that
  $$\varphi\in(\Lambda^{1,1}_J\otimes L^2(M))_{w}^0$$
  and satisfies
  $$
   \int_M\varphi\wedge F\geq 0\,\,\,{\rm and} \,\,\,\int_M\varphi^2\geq 0.
   $$
   For each $\varepsilon>0$ there is a positive $(1,1)$-form $p_\varepsilon$ and a function $f_\varepsilon$
   such that
  $$\|\varphi+{\mathcal{D}}^+_J(f_\varepsilon)-p_\varepsilon\|_{L^2(M)}<\varepsilon.
  $$
   Moreover, $p_\varepsilon$ and $f_\varepsilon$ can be assumed to be smooth.
   \end{lem}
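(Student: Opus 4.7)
The plan is to use the Hahn--Banach/Sullivan duality scheme, following Buchdahl's Lemma~7 in \cite{Bu} and Lemma~3.12 of \cite{TaWaZhZh}.

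First, fix $\varepsilon>0$ and argue by contradiction. Supposing that no pair $(p_\varepsilon,f_\varepsilon)$ of the required form approximates $\varphi$ to within $\varepsilon$ in $L^2$, I consider the $L^2$-closed convex cone
\[
\mathcal{K}:=\overline{\{\,p-\widetilde{\mathcal{D}}_J^+(f)\,:\,p\in\wedge_J^{1,1}\otimes L^2(M)\ \text{pointwise positive},\ f\in L^2_2(M)_0\,\}}.
\]
Closedness of $\mathcal{K}$ rests on the $L^2$-closedness of the positive cone together with Lemma~\ref{0151}, which guarantees that $\widetilde{\mathcal{D}}_J^+$ has closed range. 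Since $\varphi\notin\mathcal{K}$, the Hahn--Banach separation theorem produces $\tau\in\wedge_J^{1,1}\otimes L^2(M)$ with $\int_M\eta\wedge\tau\ge 0$ for every $\eta\in\mathcal{K}$ and $\int_M\varphi\wedge\tau<0$.

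Next I would analyze $\tau$. Taking $\eta=\pm\widetilde{\mathcal{D}}_J^+(f)$ shows that $\int_M\widetilde{\mathcal{D}}_J^+(f)\wedge\tau=0$ for every $f\in L^2_2(M)_0$, hence $\tau$ is weakly $\widetilde{\mathcal{D}}_J^+$-closed. Letting $\eta$ range over compactly supported rank-one positive $J$-$(1,1)$ forms forces $\tau$ to be a pointwise positive $J$-$(1,1)$ form almost everywhere; in particular $\int_M\tau^2\ge 0$ and $\int_M\tau\wedge F\ge 0$.

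Finally I would derive the contradiction from Corollary~\ref{0113}, which applied to $\varphi$ and $\tau$ should yield $\int_M\varphi\wedge\tau\ge 0$, contradicting the Hahn--Banach inequality. The main obstacle is to discharge two technical hurdles. First, Corollary~\ref{0113} is stated for forms in the class $(\wedge_J^{1,1}\otimes L^2(M))^0_{\widetilde{w}}$; the form $\varphi$ lies there by hypothesis, while $\tau$ must be placed in the class by subtracting a suitable multiple of $F$ (using that $F$ is weakly $\widetilde{\mathcal{D}}_J^+$-closed), the correction $\tau\mapsto\tau-cF$ shifting $\int_M\varphi\wedge\tau$ by $-c\int_M\varphi\wedge F$, whose sign is compatible with $\int_M\varphi\wedge F\ge 0$. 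Second, Corollary~\ref{0113} demands the strict inequalities $\int_M\varphi^2>0$ and $\int_M\varphi\wedge F>0$; the boundary cases are handled either by perturbing $\tau$ by a small positive multiple of $F$ or by invoking the equality case of Lemma~\ref{0111}, which forces $\varphi=C_\varphi F+\widetilde{\mathcal{D}}_J^+(f)$ with $C_\varphi\ge 0$ and reduces the lemma to the trivial choice $p_\varepsilon=C_\varphi F$, $f_\varepsilon=-f$. Once $L^2$ data are in hand, a mollification argument as in the proof of Lemma~\ref{311} yields smooth $p_\varepsilon$ and $f_\varepsilon$. Part~(ii) follows by the same argument, with $\mathcal{D}_J^+$ and $(\wedge_J^{1,1}\otimes L^2(M))^0_{w}$ replacing $\widetilde{\mathcal{D}}_J^+$ and $(\wedge_J^{1,1}\otimes L^2(M))^0_{\widetilde{w}}$, the only structural inputs being the closed-range property from Lemma~\ref{0151} and the weak closedness of $F$ granted by the hypothesis.
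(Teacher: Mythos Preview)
Your overall Hahn--Banach/Sullivan scheme is the right one, and your handling of the boundary case $\int_M\varphi\wedge F=0$ via the equality clause of Lemma~\ref{0111} matches the paper. The gap is in the step where you try to place the separating functional $\tau$ into $(\Lambda^{1,1}_J\otimes L^2(M))^0_{\widetilde w}$ by subtracting $cF$. Recall that membership in $(\cdot)^0_{\widetilde w}$ requires writing the form as $cF+\psi$ with $P_g^+(\psi)\perp(H_g^+\cap H_J^+)$; this is $\dim(H_g^+\cap H_J^+)=b^+-h_J^-$ independent linear conditions, while you have only one free parameter $c$. For a general tamed $(M,J)$ one only knows $0\le h_J^-\le b^+-1$, so $b^+-h_J^-$ can exceed~$1$ and your correction is insufficient. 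Without $\tau\in(\cdot)^0_{\widetilde w}$, Corollary~\ref{0113} does not apply and the contradiction collapses.

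The paper avoids this by restricting the ambient Hilbert space \emph{before} separating: one works in
\[
H:=\bigl(\Lambda^{1,1}_J\otimes L^2(M)\bigr)\setminus(H_g^+\cap H_J^+)+\mathbb{R}F,
\]
and separates the open convex set $\mathcal P_\varepsilon\subset H$ (an $\varepsilon$-thickening of $\{p\in H:p\ge 0,\ \int_Mp\wedge F=1\}$) from the closed affine subspace $\mathcal H_\varphi=\{\varphi+\widetilde{\mathcal D}_J^+(f)\}\subset H$. Because the Riesz representative $\phi$ of the separating functional is then automatically an element of $H$, once one checks (exactly as you do) that $\phi$ is weakly $\widetilde{\mathcal D}_J^+$-closed, it lies in $H\cap(\cdot)_{\widetilde w}=(\cdot)^0_{\widetilde w}$ by construction. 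The subtraction $\phi_0=\phi-cF$ in the paper serves a different purpose from yours: it shifts the separation threshold to zero, not to repair membership in $(\cdot)^0_{\widetilde w}$. A minor side remark: your sentence about ``closedness of $\mathcal K$'' is unnecessary since you already took the closure; the relevant closed-range input from Lemma~\ref{0151} is that $\mathcal H_\varphi$ is closed, which is what the separation theorem actually needs.
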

\bp (Sketch) Let $\omega=F+d_J^-(v+\bar{v})$ be the $J$-taming symplectic form, where $v\in\Omega_J^{0,1}(M)$.
Then $\widetilde{\omega}=F-d_J^+(v+\bar{v})\in H_g^+\cap H_J^+$, and
$$\int_M\widetilde{\omega}\wedge F>0.$$
  (1) Suppose that $\varphi\in(\Lambda^{1,1}_J\otimes L^2(M))^0_{\widetilde{w}}$.
   If $$\int_M\varphi\wedge F=0,$$ by Proposition \ref{0110}, it follows that
  \begin{equation}
     \varphi=\beta_{\varphi}+\widetilde{\mathcal{D}}^+_J(f_{\varphi})+d^-_g(\gamma_{\varphi}).
  \end{equation}
   Then
  $$
  0\geq -\|\beta_{\varphi}\|^2_{L^2(M)}-\|d^-_g(\gamma_{\varphi})\|^2_{L^2(M)}
    =\int_M(\varphi-\widetilde{\mathcal{D}}^+_J(f_{\varphi}))^2=\int_M\varphi^2\geq 0.
  $$
  This implies that $\beta_\varphi=0$ and $d_g^-(\gamma_\varphi)=0$.
  Thus, we  get $\varphi=\widetilde{\mathcal{D}}^+_J(f_{\varphi})$, that is,
  $\varphi$ is $\widetilde{\mathcal{D}}^+_J$ exact.
  In this case the result follows from the denseness of the smooth functions in $L^2_2(M)_0$.\\

Now we may assume without loss of generality, that $$\int_M\varphi\wedge F>0.$$
  After rescaling $\varphi$ if necessary, it can be supposed that
  $$\int_M\varphi\wedge F=1.$$
  Let
   \begin{equation}\label{3eq20}
  \mathcal{P}:=\{p\in\Lambda^{1,1}_J\otimes L^2(M)\setminus(H^+_g\cap H_J^+)+\mathbb{R}F\mid  p\geq 0,\,\,\, a.e.,\,\,\, \int_Mp\wedge F=1\};
   \end{equation}
  \begin{equation}\label{3eq21}
  \mathcal{P}_\varepsilon:=\{\rho\in\Lambda^{1,1}_J\otimes L^2(M)\setminus(H^+_g\cap H_J^+)+\mathbb{R}F\mid  \|\rho-p\|_{L^2(M)}<\varepsilon \,\,\, for\,\,\, some \,\,\,p\in\mathcal{P}\};
  \end{equation}
  \begin{equation}\label{3eq22}
  \mathcal{H}_\varphi:=\{\varphi+\widetilde{\mathcal{D}}^+_J(f)\mid  f\in L^2(M)_0\}.
  \end{equation}
  Then $\mathcal{P}_\varepsilon$ is an open convex subset of the Hilbert space
  $$H:=\Lambda^{1,1}_J\otimes L^2(M)\setminus(H^+_g\cap H_J^+)+\mathbb{R}F,$$
   and $\mathcal{H}_\varphi$ is a closed convex subset since $\widetilde{\mathcal{D}}^+_J$ has closed range by Lemma \ref{0151}.
   It is easy to see that
   $$H\cap (\wedge_J^{1,1}\otimes L^2(M))_{\widetilde{\omega}}=(\wedge_J^{1,1}\otimes L^2(M))^0_{\widetilde{\omega}}.$$
  Notice that $0\leq h_J^-\leq b^+-1$ \cite{TaWaZh}. If $\mathcal{P}_\varepsilon\cap\mathcal{H}_\varphi=\emptyset$,
  Hahn-Banach Theorem \cite{Yo} implies that there exists $\phi\in H$ and a constant $c\in\mathbb{R}$ such that
  \begin{equation}\label{3eq23}
  \int_M\phi\wedge h\leq c, \,\,\,\int_M\phi\wedge p>c,
  \end{equation}
  for every $h\in\mathcal{H}_\varphi$,
   and every $p\in\mathcal{P}_\varepsilon$ (Compare the proof of Theorem I.7 in D. Sullivan \cite{Su} and the proof of Lemma 7 in N. Buchdahl \cite{Bu}).
   In terms of (\ref{3eq22}) and (\ref{3eq23}), there exists a $f_{\phi}\in L^2_2(M)_0$ such that $h_{\phi}=\varphi+\widetilde{\mathcal{D}}^+_J(f_{\phi})$ and
  $$\int_M\phi\wedge h_{\phi}=c,$$
  since $\mathcal{H}_\varphi$ is a closed space.
For any $h\in\mathcal{H}_\varphi$, it follows that $h-h_{\phi}$ is in the image of $\widetilde{\mathcal{D}}^+_J$.
  Hence,
  \begin{equation*}
  \int_M\phi\wedge(h-h_{\phi})\leq 0,\,\,\,\int_M\phi\wedge(h_{\phi}-h)\geq 0.
   \end{equation*}
  It follows immediately that $\phi$ is weakly $\widetilde{\mathcal{D}}^+_J$-closed since $h-h_{\phi}$ and $h_{\phi}-h$ are in the image of $\widetilde{\mathcal{D}}^+_J$, that is,
  $$\int_M\phi\wedge\widetilde{\mathcal{D}}^+_J(f)=0$$
  for any $f\in L^2_2(M)_0$.
 Hence $\phi\in(\Lambda^{1,1}_J\otimes L^2(M))^0_{\widetilde{w}}$ since
 $$\phi\in H\cap(\wedge_J^{1,1}\otimes L^2(M))_{\widetilde{\omega}}.$$

  Let $$\phi_0:=\phi-cF\in(\Lambda^{1,1}_J\otimes L^2(M))^0_{\widetilde{w}}.$$
 By (\ref{3eq20}) and (\ref{3eq23}), we have
  \begin{equation*}
  \int_M\phi_0\wedge\varphi\leq c-c=0,
   \end{equation*}
  and
  \begin{equation*}
  \int_M\phi_0\wedge p_0>0
  \end{equation*}
   for any $p_0\in\mathcal{P}$.
  So $\phi_0$ is strictly positive almost everywhere.
  Hence
  $$\int_M\phi_0^2>0\,\,\, and \,\,\, \int_M\phi_0\wedge F>0.$$
  It follows from Corollary \ref{0113} that
  $$\int_M\phi_0\wedge \varphi>0,$$
   giving a contradiction.
  Therefore $\mathcal{P}_\varepsilon\cap\mathcal{H}_\varphi$ can not be empty proving the existence of $p_\varepsilon$ and $f_\varepsilon$.
   The last statement of the part (1) of the lemma follows from denseness of the smooth positive $(1,1)$-forms
    in the $L^2$-positive $(1,1)$-forms and of the smooth functions in $L^2_2(M)_0$.

(2) Since $F$ is weakly $\mathcal{D}_J^+$-closed,
we consider $(\Lambda^{1,1}_J\otimes L^2(M))^0_{w}$ case.
For $\varphi\in (\Lambda^{1,1}_J\otimes L^2(M))^0_{w}$, without loss of generality, by rescaling $\varphi$, we may assume that
$$\int_M\varphi\wedge F=1.$$
Since $0\leq h_J^-\leq b^+$ \cite{TaWaZh}, let
$$H:=\wedge_J^{1,1}\otimes L^2(M)\setminus (H_g^+\cap H_J^+)+\mathbb{R}F$$
be an open subspace of the Hilbert space $\wedge_J^{1,1}\otimes L^2(M)$. Similar to the proof of Part (1), define
\begin{align}\label{312}
\mathcal{P}:=\{p\in H|p\geq0 \ {\rm a.e.,}\ \int_Mp\wedge F=1\};
\end{align}
\begin{align}\label{313}
\mathcal{P_\varepsilon}:=\{\rho\in H| \|\rho-p\|_{L^2(M)}<\varepsilon\ {\rm for \ some\ }p\in \mathcal{P}\};
\end{align}
\begin{align}\label{314}
\mathcal{H}_\varphi:=\{\varphi+\mathcal{D}_J^+(f)| f\in L_2^2(M)_0\}.
\end{align}
Then $\mathcal{P_\varepsilon}$ is an open convex subset of $H$ and $\mathcal{H}_\varphi$ is a closed convex subset since
$\mathcal{D}_J^+$ has closed range by Lemma \ref{0151}.

We should prove that $\mathcal{H}_\varphi\cap\mathcal{P}_\varepsilon\neq\emptyset$ for any $\varepsilon>0$. If
$\mathcal{H}_\varphi\cap\mathcal{P}_\varepsilon=\emptyset$, Hahn-Banach Theorem \cite{Yo} implies that there exists $\phi\in H$
and a constant $c\in \mathbb{R}$ such that
\begin{align}\label{315}
\int_M\phi\wedge h\leq c, \int_M\phi\wedge p> c,
\end{align}
for every $h\in\mathcal{H}_\varphi$ and every $p\in\mathcal{P_\varepsilon}$. As
simiar argument in the proof of Part (1), it follows that
$$\int_M\phi\wedge \mathcal{D}_J^+(f)=0$$
for any $f\in L_2^2(M)_0$. Hence $\phi\in(\wedge_J^{1,1}\otimes L^2(M))_w$,
 since $\phi\in H$ it follows that $\phi\in(\wedge_J^{1,1}\otimes L^2(M))_w^0$. Let
$$\phi_0:=\phi-cF\in(\wedge_J^{1,1}\otimes L^2(M))_w^0.$$
By \eqref{312} and \eqref{315}, we have
$$\int_M\phi_0\wedge\varphi\leq c-c=0,$$
and
$$\int_M\phi_0\wedge p_0>0$$
for any $p_0\in \mathcal{P}$. So $\phi_0$ is strictly positive almost everywhere. Hence
$$\int_M\phi_0^2>0$$
and
$$\int_M\phi_0\wedge F>0.$$
It follows from Corollary  \ref{0113} that
$$\int_M\phi_0\wedge\varphi>0,$$
giving a contradiction. Therefore, $\mathcal{H}_\varphi\cap\mathcal{P}_\varepsilon\neq\emptyset$.

This completes the proof of Lemma \ref{0115}.
\ep

\section{A characterization of tamed $4$-manifolds}
 \setcounter{equation}{0}
This section is devoted to giving a characterization of those closed almost complex $4$-manifolds which admit tamed or
weakened tamed symplectic structures.\\

It is worth remarking that  if $(M,J_0)$ is a compact complex surface, then $h_{J_0}^-=b^+-1$ if and only if $b^1(M)$ is even, and $h_{J_0}^-=b^+$ if and
only if $b^1(M)$ is odd \cite{BaPeVa,TaWaZhZh}.
One can prove that if $b^1(M)$ is even, then $(M,J_0)$
admits a K\"{a}hler form \cite[Theorem 11]{Bu}. For a closed tamed $4$-manifold $(M,J)$, Tan, Wang, Zhou and Zhu proved
 the following tameness result \cite[Theorem 1.1]{TaWaZhZh}:
 \begin{theo}\label{th3}
Suppose that $(M,g,J,F)$ is a tamed closed almost Hermitian 4-manifold with $h_J^-=b^+-1$, where $J$-taming sympectic form is
 $$\omega=F+d_J^-(v+\bar{v}), v\in \Omega_J^{0,1}.$$
Then there exists a $J$-compatible symplectic form on $M$.
 \end{theo}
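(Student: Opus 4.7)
The plan is to construct a smooth $u\in\Omega_J^{0,1}(M)$ such that $\Omega := F - d_J^+(u+\bar u)$ is a $d$-closed, strictly positive $J$-$(1,1)$ form on $M$; such an $\Omega$ is almost Kähler, hence $J$-compatible symplectic. The natural starting candidate is
\[ \widetilde\omega := \omega - d(v+\bar v) = F - d_J^+(v+\bar v), \]
a $d$-closed real $J$-$(1,1)$ form cohomologous to $\omega$, so $\int_M\widetilde\omega^2 = \int_M\omega^2 > 0$ and $\int_M\widetilde\omega\wedge F = \int_M\omega\wedge F > 0$ (the latter because $\omega$ tames $J$, so $\omega\wedge F$ is a positive multiple of the volume form pointwise). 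By the remarks following Definition \ref{0104}, both $F$ and $d_J^+(v+\bar v)$ are weakly $\widetilde{\mathcal D}_J^+$-closed, hence so is $\widetilde\omega$. The hypothesis $h_J^-=b^+-1$, combined with the inclusions $H_J^-\subset H_g^+$ and $H_g^-\subset H_J^+$ of Section 2, forces $\dim(\mathcal H_g^+\cap H_J^+)=1$, so after subtracting the constant multiple $C_{\widetilde\omega}F$ with $C_{\widetilde\omega}=\tfrac12\int_M\widetilde\omega\wedge F$, the orthogonality condition in the definition of $(\Lambda_J^{1,1}\otimes L^2(M))^0_{\widetilde w}$ is met.

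Next I would invoke Lemma \ref{0115}(1) on $\widetilde\omega$: for every $\varepsilon>0$ there exist a smooth positive $J$-$(1,1)$ form $p_\varepsilon$ and a smooth $f_\varepsilon\in C^\infty(M)_0$ with
\[ \|\widetilde\omega+\widetilde{\mathcal D}_J^+(f_\varepsilon)-p_\varepsilon\|_{L^2(M)}<\varepsilon. \]
Since $\widetilde{\mathcal D}_J^+(f_\varepsilon)=d\widetilde{\mathcal W}(f_\varepsilon)$ and the $(0,1)$-component of $\widetilde{\mathcal W}(f_\varepsilon)$ combines with $v$ to give a new $(0,1)$-form $u_\varepsilon\in\Lambda_J^{0,1}\otimes L_1^2(M)$, this rewrites as $p_\varepsilon = F - d_J^+(u_\varepsilon+\bar u_\varepsilon)+E_\varepsilon$ with $\|E_\varepsilon\|_{L^2}<\varepsilon$.

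The main obstacle is upgrading this approximation to an exact equality by passing $\varepsilon\to 0$. Using the closed range of $d_J^+$ (Lemma \ref{311}), each $u_\varepsilon$ can be normalised modulo $\ker d_J^+$, and the light-cone inequality (Lemma \ref{0111}) together with Corollary \ref{0113} yields a uniform $L_1^2$-bound on the normalised $u_\varepsilon$ via the pairings $\int_M p_\varepsilon\wedge F$. Extracting a weakly $L_1^2$-convergent subsequence $u_\varepsilon\rightharpoonup u$, the form $\Omega := F - d_J^+(u+\bar u)$ is $d$-closed, $J$-$(1,1)$, and nonnegative as a current (as a weak $L^2$-limit of the positive $p_\varepsilon$). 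Elliptic regularity, leveraging the ellipticity of the operator $P$ from Lemma \ref{2130} and of the second-order system underlying $\widetilde{\mathcal W}$, bootstraps $u$ to smoothness. Finally, strict positivity is recovered from the open condition $\int_M\Omega\wedge F>0$ by a perturbation that absorbs a small $\varepsilon F$-correction into the $d_J^+$-exact part via a further application of Lemma \ref{0115}. The resulting smooth, $d$-closed, strictly positive $(1,1)$-form $\Omega$ is the desired $J$-compatible symplectic form.
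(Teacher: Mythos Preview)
Your proposal gets the setup right but has a genuine gap in the passage from the $L^2$-approximation to a smooth positive form. The paper's argument differs from yours in two essential ways.

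First, the paper does not apply Lemma \ref{0115} to $\widetilde\omega$ itself but to $\varphi=\widetilde\omega-t_0F$ for a carefully chosen $t_0\in(0,1)$ making $\int_M\varphi^2=0$. The point is that after passing to the limit one obtains a positive current $p=\varphi+\widetilde{\mathcal D}_J^+(g_0)$, and then $P:=\widetilde\omega+\widetilde{\mathcal D}_J^+(g_0)=p+t_0F\geq t_0F$ in the sense of currents. This strict lower bound by a smooth positive form is what drives everything that follows; your version only yields $\Omega\geq 0$ as a current, and your final ``absorb $\varepsilon F$ via Lemma \ref{0115}'' step does not recover strict positivity because it only produces another $L^2$-approximation, not an inequality.

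Second, and more seriously, your compactness and regularity claims fail. The paper explicitly observes that the uniform $L^1$ bound on $\widetilde{\mathcal D}_J^+(g_n)$ does \emph{not} give an $L^2$ bound on $g_n$; one only gets (via Lemma \ref{0120}) convergence of a subsequence weakly in $L^s_1$ for $s<4/3$ and strongly in $L^t$ for $t<2$, so the limiting function $g_0$ lies merely in $L^q_2$ for some $q\in(1,2)$. There is no elliptic bootstrap available: the current $P$ is a closed positive $(1,1)$-current which in general has Lelong numbers along $J$-curves and at isolated points. The paper then invokes substantial machinery---Siu's decomposition theorem to strip off the curve components $\sum v_iT_{D_i}$, Demailly's regularization theorem to approximate the residual current $T$ by smooth closed $(1,1)$-forms with controlled negative part $T_{c,\varepsilon}\geq(t_0-cK-\delta_\varepsilon)F$, and a local $J$-plurisubharmonic smoothing near the remaining isolated singularities---to finally produce a smooth closed positive $(1,1)$-form. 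None of this can be replaced by the elliptic regularity of $P$ from Lemma \ref{2130}.
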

{\it Proof (Sketch). }
It is clear that $F$ is weakly $\widetilde{\mathcal{D}}^+_J$-closed. Note that
$$H^+_J(M)=(H_g^+(M)\cap H_J^+(M))\oplus \mathcal{H}_g^-(M).$$
Since  $h^-_J=b^+-1$, the dimension of $H^+_J(M)$ is $b^-+1$,
the dimension of $H_g^+(M)\cap H_J^+(M)$ is $1$. We normalized such that
 $$\int_MF^2=2.$$
Set
$$\widetilde{\omega}:=\omega-d(v+\bar{v})=F-d_J^+(v+\bar{v}).$$
Since $h_J^-=b^+-1$,
by Lemma \ref{67}, $\widetilde{\omega}\in (\wedge_J^{1,1}\otimes L^2(M))_{\widetilde{w}}^0$.
Obviously, $\widetilde{\omega}$ is $d$-closed and belongs to $\Omega_J^+.$
Let
   \begin{equation}\label{0161}
\int_Md_J^-(v+\bar{v})\wedge d_J^-(v+\bar{v})=2a\geq0.
  \end{equation}
  If $a=0$, then $F$ is a $J$-compatible symplectic form. Now we suppose that $a>0$. Obviously,
   \begin{equation}\label{0162}
\int_M\widetilde{\omega}^2=\int_M\omega^2=2(1+a),
  \end{equation}
     \begin{equation}\label{0163}
\int_Md_J^+(v+\bar{v})\wedge d_J^+(v+\bar{v})=-2a,
  \end{equation}
  and
     \begin{equation}\label{0164}
\int_MF\wedge d_J^+(v+\bar{v})=-2a.
  \end{equation}
Let
$$t_0:=\left(1+\sqrt{\frac{a}{1+a}}\right)^{-1}.$$
 It is easy to see that $t_0\in(0,1)$.
By Proposition \ref{0110} and \eqref{0162}-\eqref{0164},
the smooth $J$-$(1,1)$-form
$$\varphi=\widetilde{\omega}-t_0F=(\sqrt{a(1+a)}-a)F-d_J^+(v+\bar{v})$$
is still in $(\wedge_J^{1,1}\otimes L^2(M))_{\widetilde{w}}^0$. It is easy to see that,
by Lemma \ref{0115},
for each $n\in \mathbb{N}$, there is a smooth positive $J$-$(1,1)$-form $p_n$ and a smooth function $g_n$ such that
 $$\|\varphi+\widetilde{\mathcal{D}}^+_J(g_n)-p_n\|_{L^2(M)}<\frac{1}{n}. $$
 Since
 $$\int_Mp_n\wedge F$$
 converges to
  $$\int_M\varphi\wedge F=2\sqrt{a(1+a)}>0,$$
 the positive functions $\wedge p_n$ are
 uniformly bounded in $L^2$, so a subsequence can be found converging weakly in $L^2$. The forms $\frac{p_n}{\wedge p_n}$
 are bounded in $L^{\infty}$, so subsequence of these forms can also be found in $L^1$ converging weakly in $L^4$. The sequence
$\{\widetilde{\mathcal{D}}^+_J(g_n)\}=\{d\mathcal{\widetilde{W}}(g_n)\}$ is uniformly bounded in $L^1$. The uniform $L^1$ bound
on  $\{\widetilde{\mathcal{D}}^+_J(g_n)\}$ does not imply an $L^2$ bound on $g_n$. We need to find a subsequence converge
in the sense of currents \cite{HaLa,TaWaZhZh}.
Here, we need the following lemma \cite[Claim 4.1]{TaWaZhZh}:

  \begin{lem}\label{0120}
 Given any $s<\frac{4}{3}$ and $t<2$, there is a subsequence of $\{g_n\}$ that converges weakly in $L^s_1$, and
 strongly in $L^t$ to a limiting function $g_0$.
 \end{lem}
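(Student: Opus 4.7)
The plan is to convert the uniform $L^1$ bound on $\{\widetilde{\mathcal{D}}_J^+(g_n)\}=\{d\widetilde{\mathcal{W}}(g_n)\}$ into a uniform bound on $\{g_n\}$ in $L^s_1(M)$ for every $s<4/3$, and then appeal to the Rellich-Kondrachov compactness theorem to extract the desired subsequences. The exponents appearing in the statement are dictated by $\dim_{\mathbb{R}} M=4$: the Green's function of the Laplacian on $(M,g)$ satisfies $\nabla G \in L^s_{\mathrm{loc}}$ precisely for $s<4/3$, and $L^s_1(M)\hookrightarrow L^t(M)$ compactly for every $t<4s/(4-s)$, which sweeps out all $t<2$ as $s\uparrow 4/3$.

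First I would extract an elliptic scalar equation for $g_n$. From Definition \ref{0104},
$$\widetilde{\mathcal{W}}(g_n) = Jdg_n + d^*(\eta^1_{g_n}+\bar{\eta}^1_{g_n}) - *_g\bigl(dg_n\wedge d_J^-(v+\bar{v})\bigr) + d^*(\eta^2_{g_n}+\bar{\eta}^2_{g_n}),$$
so $\widetilde{\mathcal{D}}_J^+(g_n)=dJdg_n+(\text{lower-order terms in }g_n)$, with the lower-order contributions controlled via the elliptic equations that implicitly define the auxiliary forms $\eta^i_{g_n}$ (cf. Lemma \ref{2130}). Contracting with $F$ produces a scalar equation of the form $\Delta g_n = h_n + R_n(g_n)$, where $h_n := -\wedge\widetilde{\mathcal{D}}_J^+(g_n)$ is uniformly bounded in $L^1(M)$ and $R_n$ is a first-order linear operator whose coefficients depend only on $v$ and the metric. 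Standard $L^p$-theory for $\Delta$ on the closed Riemannian $4$-manifold $(M,g)$, combined with the mean-zero normalization $\int_M g_n\,d\mu_g=0$, then converts the uniform $L^1$ bound on $\Delta g_n$ into a uniform bound on $\{g_n\}$ in $L^s_1(M)$ for each $s<4/3$; the first-order perturbation $R_n(g_n)$ is absorbed by a short bootstrap using the Interpolation Inequalities already invoked in the proof of Lemma \ref{0151}.

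Finally I would invoke Rellich-Kondrachov: given any $t<2$, choose $s<4/3$ close enough to $4/3$ that $4s/(4-s)>t$, so that $L^s_1(M)\hookrightarrow L^t(M)$ compactly. The uniform $L^s_1$-bound then yields a subsequence converging weakly in $L^s_1(M)$ and strongly in $L^t(M)$ to some limit $g_0\in L^s_1(M)$; a diagonal extraction over a countable sequence $s_k\uparrow 4/3$ produces a single subsequence that works for every admissible pair $(s,t)$ simultaneously. The main obstacle lies in the first step: the first-order correction $-*_g(dg_n\wedge d_J^-(v+\bar{v}))$ and the implicitly defined forms $\eta^i_{g_n}$ must neither spoil the ellipticity of the scalar Laplace-type equation nor circularly require bounds on $g_n$ that we do not yet have. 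Both difficulties are handled by the closed-range property of $\widetilde{\mathcal{D}}_J^+$ (Lemma \ref{0151}) together with interior elliptic regularity, which estimates $\eta^i_{g_n}$ in Sobolev norms one order higher than $g_n$, so that a two-step bootstrap (first obtaining an $L^2$-bound on $g_n$ from a weaker preliminary bound, then improving to the asserted $L^s_1$-bound) closes the argument.
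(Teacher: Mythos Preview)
Your approach is correct and is essentially the one the paper has in mind: the paper does not prove Lemma \ref{0120} here but cites \cite[Claim 4.1]{TaWaZhZh}, which in turn adapts Buchdahl's argument from \cite[Theorem 11]{Bu}. That argument is exactly the one you outline---trace $\widetilde{\mathcal{D}}_J^+(g_n)$ against $F$ to obtain a scalar Laplace-type equation with right-hand side bounded in $L^1$, invoke the Green-function estimate $\nabla G\in L^s$ for $s<4/3$ on a $4$-manifold to get $\{g_n\}$ bounded in $L^s_1$, and finish with Rellich--Kondrachov.

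One small caution on terminology: the passage from an $L^1$ right-hand side to an $L^s_1$ bound on the solution is \emph{not} Calder\'on--Zygmund $L^p$-theory (which fails at $p=1$); it is the potential-theoretic estimate you correctly invoke at the start via $\nabla G\in L^s_{\mathrm{loc}}$ and Young's inequality. Your later phrase ``standard $L^p$-theory for $\Delta$'' should be read in that sense. The absorption of the first-order perturbation $-*_g(dg_n\wedge d_J^-(v+\bar v))$ and of the implicitly defined $\eta^i_{g_n}$ is indeed the only place requiring care beyond the integrable case, and your proposed two-step bootstrap (a preliminary weak bound followed by the sharp $L^s_1$ bound) is the right mechanism; this is how \cite{TaWaZhZh} handles it as well.
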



We return to prove Theorem \ref{th3}.  By Lemma \ref{0120}, the subsequence of positive $J$-$(1,1)$-forms $\{p_n\}$ in the sense of currents defines a positive
 $J$-$(1,1)$- current
\begin{equation*}
p:=\varphi+\widetilde{\mathcal{D}}^+_J(g_0)=\widetilde{\omega}-t_0F+\widetilde{\mathcal{D}}^+_J(g_0),
 \end{equation*}
  $g_0\in L^q_2(M)_0$ for some fixed $q\in (1,2)$.
Since $\wedge p\in L^1$ and
$$p/(\wedge p)\in\Lambda^{1,1}_J\otimes L^\infty,$$ the $J$-almost K\"{a}hler current
\begin{equation}\label{0131}
P=\widetilde{\omega}+\widetilde{\mathcal{D}}^+_J(g_0)=p+t_0F\geq t_0F,
\end{equation}
in the sense of currents. More details see the proof of Proposition 4.2 in \cite{TaWaZhZh}.
 By \eqref{0131}, it is easy to see that
 \begin{equation}\label{0135}
\int_DP>0,
 \end{equation}
for each irreducible $J$-holomorphic curve (in general, subvariety \cite{Ta}) $D$ \cite{El,El1,LiZh2} on $M$ with $D^2\geq0$. To complete the proof of Theorem \ref{th3}, we will construct a $J$-almost K\"{a}hler form by using the $J$-almost
K\"{a}hler current.\\

Let $v(P,x)$ denote the Lelong number of $P$ at $x$ (c.f. Definition B.13 in \cite{TaWaZhZh} or \cite{El1,Si}). For $c>0$, the upperlevel
set
 \begin{equation}\label{0137}
E_c(P):=\{x\in M|v(P,x)\geq c\}
 \end{equation}
is a $J$-analytic subset of $M$ of dimension (complex) $\leq1$ by Siu's decomposition theorem \cite{El,El1,Si} for closed tamed $4$-manifolds (Appendix B.1 in \cite{TaWaZhZh}).
If $D$ is an irreducible $J$-holomorphic curve \cite{LiZh2,McSa0} in $E_c(P)$,
$$v_0:=\inf\{v(P,x)|x\in D\},$$
$v(P,x)=v_0$ for almost all $x\in D$  \cite[Lemma B.9]{TaWaZhZh}.
 If $D_1,\cdots,D_m$ are the irreducible $J$-holomorphic curves in $E_c(P)$ and
$$v_i:=\inf\{v(P,x)|x\in D_i\},$$
 the $d$-closed $J$-$(1,1)$-current
 \begin{equation}\label{0138}
 T=P-\Sigma v_iT_{D_i}
  \end{equation}
 is positive and the $c$-upper level set $E_c(T)$ of $T$ are isolated singular points (B.21 in \cite{TaWaZhZh}). Here
 $T_{D_i}$ are $J$-$(1,1)$ currents of integration on $D_i$ \cite{GrHa,LiZh2}, for $1\leq i\leq m$. As done in \cite{Bu}, using the method of
 Demailly \cite{De}, it is always possible to approximate the closed positive $J$-(1,1) current $T$ on a $J$-almost K\"{a}hler $4$-manifold
 by smooth real currents admitting a small negative part and that this negative part can be estimated in terms of the Lelong numbers
 of $T$ and geometry of $J$-almost K\"{a}hler $4$-manifold \cite[Theorem C.12, Remark C.13]{TaWaZhZh}. Since any $J$-almost complex $4$-manifold has the local
 symplectic property \cite{Le,Sik}, there is a $1$-parameter family $T_{c,\epsilon}$, of $d$-closed positive $J$-$(1,1)$-currents in the same homology class
 as $ T=P-\Sigma v_iT_{D_i}$ in the sense of currents which weakly converges to $T$
 as $\epsilon\rightarrow 0^+$, with $T_{c,\epsilon}$ smooth off $E_c(T)$,
 \begin{equation}\label{0139}
T_{c,\epsilon}\geq(t_0-\min\{\lambda_{\epsilon},c\}K-\delta_{\epsilon})F,
  \end{equation}
for some continuous functions $\lambda_{\epsilon}$ on $M$ and constants $\delta_{\epsilon}$ satisfying $\lambda_{\epsilon}\searrow v(T,x)$
for some $x\in M$ and $\delta_{\epsilon}\searrow0$, where $K$ is a fixed constant depending on almost Hermitian structure $(g,J,F)$ and let
$c>0$ be such that $t_0-cK>0$. For $\epsilon$ sufficiently small, therefore, $T_{c,\epsilon}\geq t_1F$ for some $t_1>0$, where $t_1$
can be chosen arbitrarily closed to $t_0$ if $c$ and $\epsilon$ are small enough \cite[Appendix C]{TaWaZhZh}.

The current $T_{c,\epsilon}$ is smooth off the $0$-dimensional set $E_c(T)$, that is, off a finite set of points. In a neighbourhood $U_{x_0}$
of any such point $x_0$, a locally symplectic form $\omega_{x_0}=d\tau_{x_0}$ on $U_{x_0}$, that is compatible with $J|_{U_{x_0}}$, where
$\tau_{x_0}\in \Omega_{\mathbb{R}}^1|_{U_{x_0}}$ \cite{Le0,Sik}. We may assume that $U_{x_0}$ is $\omega_{x_0}$-convex which is also called $J$-pseudo convex
\cite{ElGr,Pa,TaWaZhZh}. Moreover, we may assume that $U_{x_0}$ is a strictly $J$-pseudoconvex domain in the almost complex $4$-manifold $({\mathbb R}^4,J)$.
 By Lemma A.11 or Theorem A.31 in \cite{TaWaZhZh},
    there exists a strictly $J$-plurisubharmonic function $f$ such that
     $T_{c,\epsilon}=d\widetilde{\mathcal{W}}(f)=\widetilde{\mathcal{D}}^+_J(f)$ since $T_{c,\varepsilon}|_{U_{p_0}}$ is a $d$-closed positive $J$-$(1,1)$-current.
    Also we have the following estimate (Appendix A in \cite{TaWaZhZh}):
   \begin{equation}\label{estimate f}
  \|f\|_{L^2(U_{x_0},\varphi)}\leq\frac{1}{\sqrt{c}}\|\widetilde{\mathcal{W}}(f)\|_{L^2(U_{x_0},\varphi)},
   \end{equation}
   where $\varphi$ is a strictly $J$-plurisubharmonic function \cite{HaLa,Pa}
   satisfying $$\sum_{i,j}(\partial_{J_i}\bar{\partial}_{J_j}\varphi)\xi^i\bar{\xi}^j\geq c\sum_i|\xi^i|^2,$$ where $\xi\in\mathbb{C}^2$.

 Using a standard mollifying function as in \cite[page 147]{GiTr}, $f$ can be smoothed in neighborhoods of a family $f_t$ of strictly
 $J$-plurisubharmonic functions converging to $f$ and on an annular region surrounding $x_0$ the convergence of this sequence is uniformly in $C^k$
  for any $k$ (by Lemma 4.1 and the accompanying discussion in \cite{GiTr}). If $\rho$ is a standard cut off which is 1 on the exterior of the annulus and
  0 on the interior region. $\rho f+(1-\rho)f_t$ is a smooth $J$-plurisubharmonic function for $t$ sufficiently small which agrees with $f$ outside the annulus.
  Hence, the current $T_{c,\epsilon}$ is $\widetilde{\mathcal{D}}^+_J$-homologous to a smooth positive $J$-$(1,1)$-closed form $\tau_{c,\epsilon}$ for $\epsilon>0$ sufficient small \cite{TaWaZhZh}. Moreover, for any $t_1<t_0$,there is some $c$ and $\epsilon$ such that $\tau_{c,\epsilon}\geq t_1F$.
More details, see the proof of Theorem 1.1 in \cite{TaWaZhZh}.

This completes the proof of Theorem \ref{th3}. $\square$\\

As in the case of compact complex surfaces, using the method in proving Theorem \ref{th3},
 we have the following theorem which is similar to Theorem 12 in \cite{Bu}:
\begin{theo}\label{410}
Suppose that $(M,J)$ is a closed tamed almost complex $4$-manifold with $h_J^-=b^+-1$, where $\omega=F+d_J^-(v+\bar{v})$ is the $J$-taming
symplectic $2$-form on $M$, $v\in \Omega_J^{0,1}(M)$. Then $\widetilde{\omega}=F-d_J^+(v+\bar{v})$
is a $J$-almost K\"{a}hler form on $M$ modulo the image of ${\widetilde{\mathcal{D}}}^+_J$.
\end{theo}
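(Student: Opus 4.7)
The plan is to mirror the sketched proof of Theorem \ref{th3}, but to track every approximation inside the $\widetilde{\mathcal{D}}_J^+$-equivalence class of $\widetilde\omega$ so that the final almost K\"ahler form is visibly $\widetilde\omega$ plus a $\widetilde{\mathcal{D}}_J^+$-exact correction. Fix the almost Hermitian structure $(g,J,F)$ determined by $F = P_J^+\omega$, normalize $\int_M F^2 = 2$, and set $2a := \int_M d_J^-(v+\bar v) \wedge d_J^-(v+\bar v) \ge 0$. When $a=0$, Stokes forces $d_J^-(v+\bar v) = 0$, so $\widetilde\omega = F = \omega$ is already $J$-almost K\"ahler. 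For $a>0$, Lemma \ref{67} places $\widetilde\omega$ in $(\Lambda^{1,1}_J \otimes L^2(M))^0_{\widetilde w}$, and \eqref{0162}--\eqref{0164} give $\int_M \widetilde\omega^2 = 2(1+a)$ and $\int_M \widetilde\omega \wedge F = 2(1+a)$. Choose the critical $t_0 := 1 + a - \sqrt{a(1+a)} \in (0,1)$ so that, by Proposition \ref{0110}, the smooth form
\begin{equation*}
\varphi \;:=\; \widetilde\omega - t_0 F \;=\; (\sqrt{a(1+a)}-a)\,F - d_J^+(v+\bar v)
\end{equation*}
again lies in $(\Lambda^{1,1}_J \otimes L^2(M))^0_{\widetilde w}$ and satisfies $\int_M \varphi^2 \ge 0$ together with $\int_M \varphi \wedge F = 2\sqrt{a(1+a)} > 0$.

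Next I would feed $\varphi$ into Lemma \ref{0115}(1), producing smooth positive $J$-$(1,1)$ forms $p_n$ and smooth $g_n \in C^\infty(M)_0$ with $\|\varphi + \widetilde{\mathcal{D}}_J^+(g_n) - p_n\|_{L^2(M)} < 1/n$. The convergence $\int_M p_n \wedge F \to 2\sqrt{a(1+a)}$ together with $p_n \ge 0$ forces uniform $L^2$ bounds on $\Lambda p_n$, pointwise $L^\infty$ control on $p_n/(\Lambda p_n)$, and hence a uniform $L^1$ bound on $\widetilde{\mathcal{D}}_J^+(g_n) = d\widetilde{\mathcal{W}}(g_n)$. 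Lemma \ref{0120} then supplies a subsequence converging strongly in $L^t$ and weakly in $L^s_1$ to some $g_0 \in L^q_2(M)_0$ with $q \in (1,2)$, and passage to the weak current limit produces a $d$-closed positive $J$-$(1,1)$ current
\begin{equation*}
P \;=\; \widetilde\omega \;+\; \widetilde{\mathcal{D}}_J^+(g_0) \;\ge\; t_0 F,
\end{equation*}
manifestly $\widetilde{\mathcal{D}}_J^+$-homologous to $\widetilde\omega$.

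The last step is to regularize $P$ to a smooth form without leaving the $\widetilde{\mathcal{D}}_J^+$-class. Siu's decomposition gives $P = T + \sum_i v_i T_{D_i}$ with $T$ positive, $d$-closed, and having $0$-dimensional Lelong upper level sets; the almost complex Demailly regularization from Appendix C of \cite{TaWaZhZh} then provides smooth $d$-closed $J$-$(1,1)$ currents $T_{c,\epsilon}$ with $T_{c,\epsilon} \ge (t_0 - \min\{\lambda_\epsilon, c\}K - \delta_\epsilon)F$, and on a strictly $J$-pseudoconvex neighbourhood $U_{x_0}$ of each remaining singularity the $(\widetilde{\mathcal{W}}, d_J^-)$-solvability of Appendix A writes $T_{c,\epsilon}|_{U_{x_0}} = \widetilde{\mathcal{D}}_J^+(f)$ for a strictly $J$-plurisubharmonic $f$ controlled by \eqref{estimate f}; a standard annular mollification as in \cite{GiTr} replaces $f$ by a smooth strictly $J$-plurisubharmonic $\tilde f$ agreeing with $f$ off the annulus, yielding a smooth positive $d$-closed $J$-$(1,1)$ form $\tau_{c,\epsilon} \ge t_1 F$ for any prescribed $t_1 < t_0$. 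The real obstacle -- the one step genuinely beyond Theorem \ref{th3} -- is keeping this globalization in the $\widetilde{\mathcal{D}}_J^+$-class of $\widetilde\omega$ rather than merely of $T$, since Siu has subtracted the integration currents $\sum v_i T_{D_i}$. My plan is to run the Demailly scheme directly on $P$ and to exhibit each $v_i T_{D_i}$, on a tubular $J$-pseudoconvex neighbourhood of $D_i$, as $\widetilde{\mathcal{D}}_J^+$-exact via the local potential theory of Appendix A, then glue the resulting local potentials together with those at the isolated singularities by a partition of unity subordinate to the cover of singular curves and points. This produces a single $g \in C^\infty(M)_0$ with $\widetilde\omega + \widetilde{\mathcal{D}}_J^+(g) = \tau_{c,\epsilon}$, exhibiting $\widetilde\omega$ as a $J$-almost K\"ahler form modulo the image of $\widetilde{\mathcal{D}}_J^+$.
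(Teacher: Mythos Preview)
Your setup through the construction of the almost K\"ahler current $P = \widetilde\omega + \widetilde{\mathcal{D}}_J^+(g_0) \ge t_0 F$ and the Demailly regularization $\tau_{c,\epsilon}$ of $T = P - \sum v_i T_{D_i}$ matches the paper exactly. You have also correctly identified the one genuine difficulty beyond Theorem \ref{th3}: the Siu subtraction takes you out of the $\widetilde{\mathcal{D}}_J^+$-class of $\widetilde\omega$, so that $\tau_{c,\epsilon}$ represents $\widetilde\omega - D$ with $D = \sum v_i[D_i]$ rather than $\widetilde\omega$ itself.

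Your proposed fix, however, does not work. The integration current $T_{D_i}$ represents a nonzero class in $H_J^+(M)$ (the Poincar\'e dual of $[D_i]$), so it is \emph{not} globally $\widetilde{\mathcal{D}}_J^+$-exact. Writing local potentials $f_\alpha$ on a cover and gluing with a partition of unity $\rho_\alpha$ fails because $\widetilde{\mathcal{D}}_J^+$ is second-order: $\widetilde{\mathcal{D}}_J^+\bigl(\sum_\alpha \rho_\alpha f_\alpha\bigr)$ acquires cross-terms involving derivatives of the $\rho_\alpha$, and these do not cancel. The same obstruction prevents you from ``running the Demailly scheme directly on $P$'' while staying in the class.

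The paper's route around this is cohomological rather than potential-theoretic. One introduces the norm
\[
\|\psi\|_{\widetilde\omega}^2 \;=\; (1+a)^{-1}(\widetilde\omega\cdot\psi)^2 - \psi\cdot\psi
\]
on $H_J^+(M)$, which is positive-definite by Proposition \ref{01} since $h_J^- = b^+ - 1$. Using $0 \le \int_M(\tau_{c,\epsilon} - t_1 F)^2$, the relations $\widetilde\omega\cdot D = F\cdot D \ge 0$, and Proposition \ref{0112}, one obtains
\[
\|D\|_{\widetilde\omega}^2 \;\le\; 2(t_0 - t_1)\bigl[2(1+a) + t_0 + t_1\bigr].
\]
Now take a sequence $t_i \nearrow t_0$ with corresponding $c_i,\epsilon_i$ and divisor classes $D_{(i)}$: the estimate forces $D_{(i)} \to 0$ in $H_J^+(M)$. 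The chosen smooth closed $(1,1)$-representatives (those whose self-dual part is a constant multiple of $F$) then converge to zero in $C^0$ by elliptic regularity. Since $\tau_{c_i,\epsilon_i}$ is $\widetilde{\mathcal{D}}_J^+$-homologous to $\widetilde\omega - D_{(i)}$, the form $\tau_{c_i,\epsilon_i} + D_{(i)}$ is $\widetilde{\mathcal{D}}_J^+$-homologous to $\widetilde\omega$, and for $i$ large enough $\tau_{c_i,\epsilon_i} + D_{(i)} \ge t_i F + D_{(i)} > 0$. This is precisely Buchdahl's argument for Theorem 12 of \cite{Bu}; the point is that one adds back a \emph{small smooth} representative of the class $D$ rather than attempting to make the current $\sum v_i T_{D_i}$ exact.
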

\bp Let
 $$\widetilde{\omega}=\omega-d(v+\bar{v})=F-d_J^-(v+\bar{v}).$$

 In the proof of Theorem \ref{th3}, the cohomology class of $\tau_{c,\epsilon}$ is the same as that
$T$ which is in turn the same as that $\widetilde{\omega}-D$ for $D=\sum\gamma_i[D_i]$, where $[D_i]$ is an irreducible $J$-curve, and $\gamma_i\geq0$ \cite{LiZh1,LiZh2,Zh}. Here and subsequently the notation
is abused in the standard way by identifying a $d$-closed $J$-$(1,1)$-form with its image in $H_J^+(M)$ and vice verse; unless otherwise stated the
$J$-$(1,1)$-form representing a given class will always be that which has self-dual component a constant multiple of $F$ due to Proposition \ref{0110}, $h_J^-=b^+-1$ and Lemma \ref{67}. By Proposition \ref{01}, the notation $\psi\cdot\chi$ will be used to denote
  $$\int_M\psi\wedge\chi$$
   for $d$-closed
 $J$-$(1,1)$-forms $\psi$ and $\chi$. Thus, for each $\psi\in H_J^+(M)$,
 \begin{eqnarray}\label{40007}
\|\psi\|_{\widetilde{\omega}}^2:&=&2(\widetilde{\omega}\cdot\psi)^2(\widetilde{\omega}\cdot\widetilde{\omega})^{-1}-\psi\cdot\psi\nonumber\\
&=&(1+a)^{-1}(\widetilde{\omega}\cdot\psi)^2-\psi\cdot\psi
\end{eqnarray}
defines a norm on $H_J^+(M)$ since, by Proposition \ref{01}, the cup-product is negative definite on the orthogonal completement of $\widetilde{\omega}$,
that is,
$$H_J^+(M)\cong \mathbb{R}\widetilde{\omega}\oplus \mathcal{H}_g^-(M)\oplus {\widetilde{\mathcal{D}}}^+_J(C^{\infty}(M)_0).$$

If $t_1<t_0$ is such that $\tau_{c,\epsilon}\geq t_1F$, a simple calculation using the fact that $\tau_{c,\epsilon}=\widetilde{\omega}-D$ in $H_J^+(M)$
yields
 \begin{eqnarray}\label{40008}
0&\leq& \int_M(\tau_{c,\epsilon}-t_1F)^2\nonumber\\
&=&\int_M (\widetilde{\omega}-D-t_1F)^2\nonumber\\
&=&\int_M \{(\widetilde{\omega}-t_1F)^2+D\wedge D-2(\widetilde{\omega}-t_1F)\wedge D\}\nonumber\\
&=&[2(1+a)-4t_1(1+a)+2t_1^2]+D\cdot D-2(\widetilde{\omega}-t_1F)\cdot D.
\end{eqnarray}
Here, since $D=\Sigma\gamma_i[D_i]$, where $\gamma_i\geq0$, $[D_i]$ is an irreducible $J$-curve,
 \begin{eqnarray}\label{40009}
F\cdot D=\int_M F\wedge D=\sum_i\gamma_i\int_{D_i}F
\end{eqnarray}
is a non-negative term which is zero if $D=0$ in $H_J^+(M)$. Since $D$ is $d$-closed,
$\widetilde{\omega}\cdot D=F\cdot D$, so the last term on the right of the inequality is
$-2(1-t_1)F\cdot D$. Note that since $M$ is four-dimensional, hence by taking Poincar\'{e} duality,
we identify the $J$-curve classes with their Poincar\'{e} dual cohomology classes by abusing the notation.
Since
$$D\cdot D=-\|D\|_{\widetilde{\omega}}^2+(F\cdot D)^2(1+a)^{-1},$$
the inequality can be rewritten
\begin{eqnarray*}
\|D\|_{\widetilde{\omega}}^2&\leq& [2(1+a)-4t_1(1+a)+2t_1^2]\\
&+&\frac{F\cdot D}{1+a} [ F\cdot D-2(1+a)(1-t_1)].
\end{eqnarray*}
By Proposition \ref{0112}, the inequality
\begin{eqnarray*}
0\leq \widetilde{\omega}\cdot(\widetilde{\omega}-t_1F)=2(1+a)(1-t_1)-F\cdot D
\end{eqnarray*}
implies that
\begin{eqnarray*}
\|D\|_{\widetilde{\omega}}^2\leq [2(1+a)-4t_1(1+a)+2t_1^2]=2(t_0-t_1)[2(1+a)+t_0+t_1].
\end{eqnarray*}
Equality  holds only if $D$ is homologous to a multiple of $\widetilde{\omega}$.

Now choose a sequence of constants $t_i$, increasing to $t_0$ and corresponding costants $c_i$ and $\epsilon_i$
so that   $\tau_{c_i,\epsilon_i}\geq t_iF$, with $\tau_{c_i,\epsilon_i}=\widetilde{\omega}-D_{(i)}$ belonging to $H_J^+(M)$.
It follows from inequality above that the corresponding sequence of classes $D_{(i)}$ converging to zero in $H_J^+(M)$ and
therefore the corresponding representative $J$-$(1,1)$-forms (which have self-dual component of a constant multiple of $F$) are
converging to zero in $L^2(M)$; by elliptic regularity \cite{Au,GiTr} and choice of  representative, these forms are converging to zero in $C^0(M)$.
Since $\tau_{c_i,\epsilon_i}\geq t_iF$, it follows that for $i$ large enough  $J$-$(1,1)$-form $\tau_{c_i,\epsilon_i}+D_{(i)}$
is positive, i.e., $\widetilde{\omega}$ is $\widetilde{\mathcal{D}}^+_J$-homologous to a smooth positive closed $J$-$(1,1)$-form.

 This completes
the proof of Theorem \ref{410}.
\ep

Perhaps it is worth remarking that, as in compact complex surface \cite{Bu}, if $(M,J)$ is a tamed closed almost complex $4$-manifold, then there exists a real $2$-form $\omega$ on $M$ such that:\\
(1) $d\omega=0$;\\
(2) $P_J^+\omega$ is positive definite;\\
(3) $P_J^-\omega=d_J^-(u+\bar{u})$ for some $J$-$(0,1)$ form $u$;\\
(4) $0\leq h_J^-\leq b^+-1$  \cite{TaWaZh,TaWaZhZh}.\\

Suppose that $(M,g,J,F)$ is a closed almost Hermitian $4$-manifold. Let
$$\psi\in\Lambda^{1,1}_J\otimes L^2(M)$$
 be a weakly
${\mathcal{D}}^+_J$-closed $J$-$(1,1)$ current, and let $(\Lambda^{1,1}_J\otimes L^2(M))_{w}$ denote the space of
weakly ${\mathcal{D}}^+_J$-closed $J$-$(1,1)$ forms.
Through the remainder of this section, the fundamental form $F$ will be a fixed smooth positive weakly $\mathcal{D}_J^+$-closed $J$-$(1,1)$ form on $M$.
Similar to Lemma 2 in \cite{Bu} for compact complex surfaces. We have the following lemma:
\begin{lem}\label{402} Suppose that $(M,g,J,F)$ is a closed almost Hermitian $4$-manifold and $F$ is  weakly $\mathcal{D}_J^+$-closed.
For every $\psi\in (\wedge_J^{1,1}\otimes L^2(M))_w^0$, there exists $u_\psi\in \wedge_J^{0,1}\otimes L_1^2(M)$
such that
$$\widetilde{\psi}=\psi+d_J^+(u_\psi+\bar{u}_\psi)$$
is smooth and $*_g\widetilde{\psi}$ is $d$-closed, where $*_g$ is Hodge star operator with respect to the metric $g$.
\end{lem}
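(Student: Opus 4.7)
The plan has three stages: (i) reformulate the $d$-closedness of $*_g\widetilde{\psi}$ as coclosedness of $\widetilde{\psi}$; (ii) solve the resulting adjoint equation via the closed-range property of $d_J^+$; (iii) deduce smoothness of the resulting coclosed form by combining Proposition~\ref{0110} with the weakly $\mathcal{D}_J^+$-closed condition.

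For stage (i), on $2$-forms of a closed oriented Riemannian $4$-manifold one has $d^* = -*_g d *_g$ and $*_g^2 = \mathrm{Id}$, so $d *_g \widetilde{\psi} = 0$ if and only if $d^*\widetilde{\psi} = 0$. Since $g$ is $J$-invariant, the splitting $\Lambda^2_{\mathbb{R}} = \Lambda^+_J \oplus \Lambda^-_J$ is $g$-orthogonal, so $P_J^+$ is self-adjoint and the adjoint of $d_J^+ : \Omega^1_{\mathbb{R}} \otimes L^2_1(M) \to \Lambda^{1,1}_J \otimes L^2(M)$ is $(d_J^+)^* = d^*|_{\Lambda^{1,1}_J}$. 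Writing $\alpha = u_\psi + \bar{u}_\psi$, the condition $d^*\widetilde{\psi} = 0$ is equivalent to the adjoint equation
$$ (d_J^+)^* d_J^+ \alpha = -(d_J^+)^*\psi. $$

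For stage (ii), by Lemma~\ref{311}, $d_J^+$ has closed range, so the self-adjoint operator $(d_J^+)^* d_J^+$ has image $\mathrm{Im}((d_J^+)^*) = (\ker d_J^+)^{\perp}$. The right-hand side lies in this orthogonal complement, because for every $\eta \in \ker d_J^+ \cap \Omega^1_{\mathbb{R}}\otimes L^2_1$,
$$ \langle (d_J^+)^*\psi, \eta\rangle_{L^2} = \langle \psi, d_J^+\eta\rangle_{L^2} = 0. $$
Thus a solution $\alpha \in \Omega^1_{\mathbb{R}} \otimes L^2_1(M)$ exists, and taking $u_\psi$ to be its $(0,1)$-part yields $\widetilde{\psi} = \psi + d_J^+(u_\psi + \bar{u}_\psi)$ with $d^*\widetilde{\psi} = 0$, i.e., $*_g\widetilde{\psi}$ is $d$-closed.

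Stage (iii), the main obstacle, is the smoothness of $\widetilde{\psi}$. Note first that $\widetilde{\psi}$ remains in $(\Lambda^{1,1}_J \otimes L^2(M))^0_{w}$, because $\psi$ does and $d_J^+(u_\psi + \bar{u}_\psi)$ is weakly $\mathcal{D}_J^+$-closed by Remark~\ref{0108}. Applying Proposition~\ref{0110} gives a decomposition
$$ \widetilde{\psi} = C F + \beta + \mathcal{D}_J^+(h) + d_g^-(\delta) $$
with $C \in \mathbb{R}$, $\beta \in \mathcal{H}^-_g$ smooth, $h \in L^2_2(M)_0$, $\delta \in \Omega^1_{\mathbb{R}} \otimes L^2_1$. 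Here $\mathcal{D}_J^+(h) = d_J^+\mathcal{W}(h)$ lies in $\mathrm{Im}(d_J^+)$, and via $\Lambda^+_J = \mathbb{R} F \oplus \Lambda^-_g$ one has $d_J^+\delta = c_\delta F + d_g^-(\delta)$ with $c_\delta = \tfrac{1}{2}\Lambda(d_J^+\delta) \in L^2$; hence the non-smooth pieces of the decomposition are absorbable into $\mathrm{Im}(d_J^+)$ modulo an $L^2$ multiple of $F$, reducing $\widetilde{\psi}$ to an ansatz of the shape $\lambda F + \beta$. The coclosed identity $d^*\widetilde{\psi} = 0$ unpacks, using $d^*(\lambda F) = -*_g(d\lambda \wedge F + \lambda\, dF)$ and $d\beta = 0$, into the first-order relation $d\lambda \wedge F + \lambda\, dF = 0$; combined with the weakly $\mathcal{D}_J^+$-closed orthogonality (which, via Stokes, constrains $\int d\widetilde{\psi}\wedge \mathcal{W}(f) = 0$ for all admissible $f$), this produces an elliptic system for $\lambda$. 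Elliptic regularity then forces $\lambda \in C^{\infty}(M)$ and hence $\widetilde{\psi}$ smooth, in the spirit of Buchdahl's Lemma~2 in the integrable case.
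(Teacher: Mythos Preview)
Your stages (i) and (ii) are correct and coincide with the paper's approach: the paper takes $\widetilde{\psi}$ to be the $L^2$-orthogonal projection of $\psi$ off $\mathrm{Im}\,d_J^+$ (using Lemma~\ref{311}), which is exactly your adjoint equation, and then observes that $\widetilde{\psi}\perp\mathrm{Im}\,d_J^+$ forces $d^*\widetilde{\psi}=0$, i.e.\ $d(*_g\widetilde{\psi})=0$.

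The gap is in stage (iii). Your claim that ``the non-smooth pieces are absorbable into $\mathrm{Im}(d_J^+)$ modulo an $L^2$ multiple of $F$, reducing $\widetilde{\psi}$ to an ansatz of the shape $\lambda F+\beta$'' is not justified. Writing $d_g^-(\delta)=d_J^+\delta-c_\delta F$ and $\mathcal{D}_J^+(h)\in\mathrm{Im}\,d_J^+$ only gives $\widetilde{\psi}=\lambda F+\beta+\xi$ with $\lambda\in L^2$ and $\xi\in\mathrm{Im}\,d_J^+$; you cannot drop $\xi$ merely because $\widetilde{\psi}\perp\mathrm{Im}\,d_J^+$, since $\lambda F$ is in general \emph{not} orthogonal to $\mathrm{Im}\,d_J^+$ when $dF\neq 0$ (indeed $\langle \lambda F,d_J^+\alpha\rangle=\int \lambda F\wedge d\alpha$ need not vanish). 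The subsequent ``elliptic system for $\lambda$'' is therefore built on an unestablished reduction, and the hypothesis that $F$ is weakly $\mathcal{D}_J^+$-closed is never actually used.

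The paper's key step, which you are missing, is this: since $\widetilde{\psi}\perp\mathrm{Im}\,d_J^+$ and $\mathcal{D}_J^+(h)=d_J^+\mathcal{W}(h)$, one has $\langle\widetilde{\psi},\mathcal{D}_J^+(h)\rangle_{L^2}=\int_M *_g\widetilde{\psi}\wedge\mathcal{D}_J^+(h)=0$. Now expand $\widetilde{\psi}=C_\psi F+\beta_\psi+\mathcal{D}_J^+(h)+d_g^-(\gamma_\psi)$ inside this pairing. The terms $\langle C_\psi F,\mathcal{D}_J^+(h)\rangle$, $\langle\beta_\psi,\mathcal{D}_J^+(h)\rangle$, $\langle d_g^-(\gamma_\psi),\mathcal{D}_J^+(h)\rangle$ all vanish: the first because $F$ is weakly $\mathcal{D}_J^+$-closed (this is precisely where the hypothesis enters), the second because $\beta_\psi$ is closed and $\mathcal{D}_J^+(h)$ is exact, the third because $d_g^-(\gamma_\psi)$ is weakly $\mathcal{D}_J^+$-closed (Proposition~\ref{0110}). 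Hence $\|\mathcal{D}_J^+(h)\|^2=0$, so $\mathcal{D}_J^+(h)=0$ and $\widetilde{\psi}=C_\psi F+\beta_\psi+d_g^-(\gamma_\psi)$ with $C_\psi$ a \emph{constant}. Thus $\Lambda\widetilde{\psi}=2C_\psi$ is constant, $dP_g^+\widetilde{\psi}=C_\psi\,dF$ is smooth, and together with $d^*\widetilde{\psi}=0$ elliptic regularity yields smoothness of $\widetilde{\psi}$.
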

\bp
Let $\widetilde{\psi}$ be the $L^2$ projection of $\psi$ perpendicular to the image of $d_J^+$ with
respect to the  almost Hermitian metric $g(\cdot,\cdot)=F(\cdot,J\cdot)$. So, by Lemma \ref{311},
 $$\widetilde{\psi}=\psi+d_J^+(u_\psi+\bar{u}_\psi)$$
 for some $u_\psi\in\wedge_J^{0,1}\otimes L_1^2(M)$. Then
 $$\int_M\widetilde{\psi}\wedge*_g(\partial_Jv+\bar{\partial}_J\bar{v})=\int_M \widetilde{\psi}\wedge*_g(d_J^+(v+\bar{v})) =0$$
 for every $J$-$(0,1)$ form $v$ with coefficients in $L_1^2(M)$, where $*_g$ is Hodge star operator with respect to the metric $g$.
 Assume that
 $$\int_MF^2=2.$$
 This implies that $d(*_g\widetilde{\psi})=0$ weakly. Hence $*_g\widetilde{\psi}$ is also weakly ${\mathcal{D}}_J^+$-closed.
 By Proposition \ref{0110}, we have
 \begin{equation}\label{40004}
\widetilde{\psi}=C_{\psi}F+\beta_{\psi}+d_g^-(\gamma_{\psi})+\mathcal{D}_J^+(f_{\psi}),
 \end{equation}
 where $$C_{\psi}=\frac{1}{2}\int_MF\wedge \widetilde{\psi}$$ is a constant,  $$\beta_{\widetilde{\psi}}\in \mathcal{H}_g^-(M),
 \gamma_{\psi}\in\wedge_{\mathbb{R}}^1\otimes L_1^2(M),   f_{\psi}\in L_2^2(M)_0,$$ and
 $d_g^-(\gamma_{\psi})$ is weakly $\mathcal{D}_J^+$-closed. Since $*_g\widetilde{\psi}$ is also weakly ${\mathcal{D}}_J^+$-closed,
 it follows that
 $$0=\int_M*_g\widetilde{\psi}\wedge\mathcal{D}_J^+(f_{\psi})=\int_M*_g\mathcal{D}_J^+(f_{\psi})\wedge\mathcal{D}_J^+(f_{\psi}).$$
 This implies that $\mathcal{D}_J^+(f_{\psi})=0$. Thus, by \eqref{40004}, we have
  \begin{equation}\label{40005}
\widetilde{\psi}=C_{\psi}F+\beta_{\psi}+d_g^-(\gamma_{\psi}),
 \end{equation}
 and
   \begin{equation}\label{400051}
*_g\widetilde{\psi}=C_{\psi}F-\beta_{\psi}-d_g^-(\gamma_{\psi}).
 \end{equation}
 Obviously, $\wedge\widetilde{\psi}=2C_{\psi}$ is a constant, where $\wedge$ is the adjoint operator of
 $$L:f\mapsto fF$$
 for any $f\in L^2(M)$.
 Arguing as the proof of Lemma 2 in \cite{Bu}, we have
 $$d^*\widetilde{\psi}=0$$
  and
 $$dP_g^+\widetilde{\psi}=\frac{1}{2}(\wedge\widetilde{\psi})dF,$$ in the sense of currents,
 where $d^*=-*_gd*_g$. By elliptic regularity \cite{Au,GiTr}, it implies that $\widetilde{\psi}$ is smooth and weakly $\mathcal{D}_J^+$-closed.

 This completes the proof of Lemma \ref{402}.
\ep
It is similar to Theorem 38 of Harvey and Lawson \cite{HaLa0}, a weakened tamed symplectic structure can be characterized
 as follows (cf. \cite[Corollary 9]{Bu}):
\begin{theo}\label{18}
 Let $(M,g,J,F)$ be a closed almost Hermitian $4$-manifold. Suppose that  $F$ is weakly
 ${\mathcal{D}}_J^+$-closed. If $\varphi\in(\wedge_J^{1,1}\otimes L^2(M))_w^0$, then there exist $u_{\varphi}\in \wedge_J^{0,1}\otimes L_1^2(M)$, and $u_{\varphi}'\in\Omega_J^{0,1}$
  such that
 $$\varphi+d_J^+(u_\varphi+\bar{u}_\varphi)-2d_g^-(u_\varphi'+\bar{u}_\varphi')$$ is smooth and $d$-closed. In particular,
 $$F+d_J^+(u_F+\bar{u}_F)-2d_g^-(u_F'+\bar{u}_F')=C_FF-d_g^-(u_F'+\bar{u}_F')$$ is $d$-closed,
 which is called weakened tamed symplectic form on $M$,
  where $C_F$ is a constant, and $u_F$ and $u_F'\in \Omega_J^{0,1}(M).$
 \end{theo}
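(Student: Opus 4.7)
The plan is to combine Lemma \ref{402} with the decomposition of Proposition \ref{0110} and then use elliptic regularity to upgrade the anti-self-dual primitive to a smooth section. I would apply Lemma \ref{402} to $\varphi \in (\wedge_J^{1,1} \otimes L^2(M))_w^0$, obtaining $u_\varphi \in \wedge_J^{0,1} \otimes L_1^2(M)$ such that
$$\widetilde{\varphi} := \varphi + d_J^+(u_\varphi + \bar{u}_\varphi)$$
is smooth and $*_g \widetilde{\varphi}$ is $d$-closed. The proof of Lemma \ref{402} simultaneously provides, via Proposition \ref{0110}, a decomposition
$$\widetilde{\varphi} = C_\varphi F + \beta_\varphi + d_g^-(\gamma_\varphi),$$
with $C_\varphi = \tfrac{1}{2}\int_M F \wedge \widetilde{\varphi}$, $\beta_\varphi \in \mathcal{H}_g^-(M)$, and $\gamma_\varphi \in \wedge^1_\mathbb{R}\otimes L_1^2(M)$ (the $\mathcal{D}_J^+$-exact component was forced to vanish there). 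Since $\beta_\varphi$ and $d_g^-(\gamma_\varphi)$ both lie in $\Omega_g^-$ while $F \in \Omega_g^+$, Hodge duality gives $*_g \widetilde{\varphi} = C_\varphi F - \beta_\varphi - d_g^-(\gamma_\varphi)$, and subtracting produces the key identity
$$\widetilde{\varphi} - 2 d_g^-(\gamma_\varphi) = *_g \widetilde{\varphi} + 2 \beta_\varphi,$$
whose right hand side is $d$-closed ($*_g \widetilde{\varphi}$ is closed by construction; $\beta_\varphi$ is harmonic, hence closed).

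Next, I would upgrade $\gamma_\varphi$ to a smooth primitive. Since $d_g^-(\gamma_\varphi) = \widetilde{\varphi} - C_\varphi F - \beta_\varphi$ is smooth, and the self-adjoint strongly elliptic operator $d_g^- d^* : \Omega_g^- \to \Omega_g^-$ of \eqref{2eq16'} has kernel $\mathcal{H}_g^-$, inverting it on $(\mathcal{H}_g^-)^\perp$ and applying elliptic regularity yields a smooth $\chi \in \Omega_g^-$ with $d_g^- d^* \chi = d_g^-(\gamma_\varphi)$. Setting $\widetilde{\gamma}_\varphi := d^* \chi$ gives a smooth real $1$-form with $d_g^-(\widetilde{\gamma}_\varphi) = d_g^-(\gamma_\varphi)$; letting $u_\varphi' \in \Omega_J^{0,1}$ be the $(0,1)$-component of $\widetilde{\gamma}_\varphi$, one has $\widetilde{\gamma}_\varphi = u_\varphi' + \bar{u}_\varphi'$, and
$$\varphi + d_J^+(u_\varphi + \bar{u}_\varphi) - 2 d_g^-(u_\varphi' + \bar{u}_\varphi') = *_g \widetilde{\varphi} + 2 \beta_\varphi$$
is smooth and $d$-closed, proving the first conclusion.

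For the ``In particular'' statement with $\varphi = F$, I would verify that $\beta_F = 0$. Using $\Omega_J^- \subset \Omega_g^+$ from \eqref{2eq18} and the pointwise identity $\Omega_g^+ \wedge \Omega_g^- = 0$, one finds $F \wedge \beta_F = 0$ and $d_J^-(u_F + \bar{u}_F) \wedge \beta_F = 0$. Stokes' theorem together with $d\beta_F = 0$ gives $\int_M d(u_F + \bar{u}_F) \wedge \beta_F = 0$, so writing $d = d_J^+ + d_J^-$ yields $\int_M d_J^+(u_F + \bar{u}_F) \wedge \beta_F = 0$; hence $\int_M \widetilde{F} \wedge \beta_F = 0$. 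Pairing the same integral against the decomposition of $\widetilde{F}$ (noting that $\mathcal{H}_g^-$ is $L^2$-orthogonal to $d_g^-(\Omega^1_\mathbb{R})$) gives $\int_M \beta_F \wedge \beta_F = -\|\beta_F\|_{L^2}^2$, so $\beta_F = 0$. Therefore $\widetilde{F} = C_F F + d_g^-(u_F' + \bar{u}_F')$, and the desired expression equals $*_g \widetilde{F} = C_F F - d_g^-(u_F' + \bar{u}_F')$; a further elliptic regularity argument (using that $d_J^+$ has closed range by Lemma \ref{311}) upgrades $u_F$ itself to $\Omega_J^{0,1}(M)$.

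The main obstacle is the regularity upgrade from $L_1^2$ to smooth: Proposition \ref{0110} only supplies $\gamma_\varphi \in L_1^2$, whereas the theorem demands $u_\varphi' \in \Omega_J^{0,1}$. This is resolved by inverting the strongly elliptic operator $d_g^- d^*$ on $(\mathcal{H}_g^-)^\perp \subset \Omega_g^-$; the remainder of the argument is careful bookkeeping with the type decompositions $\Omega_J^- \subset \Omega_g^+$, the harmonicity of $\beta_\varphi$, and the fact that $\widetilde{\varphi}$ was engineered to make $*_g \widetilde{\varphi}$ closed.
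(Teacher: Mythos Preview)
Your proposal is correct and follows essentially the same route as the paper's proof: apply Lemma~\ref{402} to obtain $\widetilde{\varphi}$ smooth with $*_g\widetilde{\varphi}$ closed, read off the decomposition $\widetilde{\varphi}=C_\varphi F+\beta_\varphi+d_g^-(\gamma_\varphi)$ from its proof, and observe that $\widetilde{\varphi}-2d_g^-(\gamma_\varphi)=*_g\widetilde{\varphi}+2\beta_\varphi$ is closed; the vanishing of $\beta_F$ when $\varphi=F$ is handled by the same Stokes/type argument. The only difference is that you make explicit the elliptic-regularity step (inverting $d_g^-d^*$ on $(\mathcal{H}_g^-)^\perp$) needed to replace the $L_1^2$ primitive $\gamma_\varphi$ by a smooth $u_\varphi'$, which the paper's proof simply asserts; this is a clarification rather than a different approach.
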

 \bp
Since $F$ is weakly $\mathcal{D}_J^+$-closed and $\varphi\in(\wedge_J^{1,1}\otimes L^2(M))_w^0$, by Lemma \ref{402}, there exists $u_\varphi\in\wedge_{J}^{0,1}\otimes L_1^2(M)$
 and $u_\varphi'\in\Omega_J^{0,1}$ such that
 \begin{eqnarray}\label{40006}
\widetilde{\varphi}:=\varphi+d_J^+(u_\varphi+\bar{u}_\varphi)=C_{\varphi}F+\beta_{\varphi}+d_g^-(u_\varphi'+\bar{u}_\varphi'),
 \end{eqnarray}
 and $*_g\widetilde{\varphi}$ is $d$-closed and smooth, where $C_\varphi$ is a constant, $\beta_{\varphi}\in \mathcal{H}_g^-(M)$.
Since $\varphi\in(\wedge_J^{1,1}\otimes L^2(M))_w^0$, it is easy to see that $\widetilde{\varphi}$ is also in $(\wedge_J^{1,1}\otimes L^2(M))_w^0$.
Hence,
 $$*_g\widetilde{\varphi}=C_\varphi F-d_g^-(u_\varphi'+\bar{u}_\varphi')-\beta_\varphi,$$
 it follows that
 $$C_\varphi F-d_g^-(u_\varphi'+\bar{u}_\varphi')$$
  is $d$-closed. Hence, by \eqref{40006}, we have
 \begin{eqnarray}\label{400014}
 \varphi+d_J^+(u_\varphi+\bar{u}_\varphi)-2d_g^-(u_\varphi'+\bar{u}_\varphi')
 \end{eqnarray} is smooth and $d$-closed.

If $\varphi=F$, it is in $(\wedge_J^{1,1}\otimes L^2(M))_w^0$. Then there are $u_F,u_F'\in \Omega_J^{0,1}$ and $C_F\neq0$ such that
 \begin{eqnarray}\label{400015}
\widetilde{F}:=F+d_J^+(u_F+\bar{u}_F)=C_FF+\beta_F+d_g^-(u_F'+\bar{u}_F').
\end{eqnarray}
Note that for any $\beta\in \mathcal{H}_g^-(M)$, there is
 $$\int_M\widetilde{F}\wedge\beta=\int_M(F+d_J^+(u_F+\bar{u}_F))\wedge\beta=\int_Md(u_F+\bar{u}_F)\wedge\beta=0.$$
It implies that $\beta_F=0$, that is,
 \begin{eqnarray*}
\widetilde{F}:=F+d_J^+(u_F+\bar{u}_F)=C_FF+d_g^-(u_F'+\bar{u}_F').
\end{eqnarray*}
By \eqref{400014} and \eqref{400015}, we get that
$$F+d_J^+(u_F+\bar{u}_F)-2d_g^-(u_F'+\bar{u}_F')=C_FF-d_g^-(u_F'+\bar{u}_F')=*_g\widetilde{F}$$
 is smooth and $d$-closed which is called weaken tamed symplectic form.

  This completes the proof of Theorem \ref{18}.
 \ep

\begin{rem}\label{19}
In Theorem \ref{18},  $C_F=0$ implies that $*_g\widetilde{F}=0$. In fact, if $C_F=0$,
then $$*_g\widetilde{F}=-d_g^-(u_F'+\bar{u}_F')$$ is smooth and $d$-closed. Thus, by Stokes Theorem, we have
$$\|d_g^-(u_F'+\bar{u}_F')\|^2=-\int_Md(u_F'+\bar{u}_F')\wedge d_g^-(u_F'+\bar{u}_F')=0.$$
Therefore, $C_F\neq0$.
\end{rem}

We now give a characterization of  almost K\"{a}hler $4$-manifolds.
\begin{theo}\label{17}  Let $(M,g,J,F)$ be a closed almost Hermitian $4$-manifold and $h_J^-=b^+-1$.
 Suppose that $F$ is weakly ${\mathcal{D}}_J^+$-closed and $C_F,u_F'$  defined in Theorem \ref{18}. If
 $$*_g\widetilde{F}=C_FF-d_g^-(u_F'+\bar{u}_F')$$
 satisfies that
 $$\int_M(*_g\widetilde{F})^2=\int_M(C_FF-d_g^-(u_F'+\bar{u}_F'))^2>0,$$
 then there exists a $J$-almost K\"{a}hler form on $(M,J)$.
 \end{theo}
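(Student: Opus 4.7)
The plan is to treat $\alpha := *_g\widetilde{F}$ as a substitute for the $d$-closed form $\widetilde{\omega}=F-d_J^+(v+\bar v)$ that drives the proof of Theorem \ref{th3}/Theorem \ref{410}, and then to run the Buchdahl--Sullivan--Demailly argument in the $\mathcal{D}_J^+$-setting. First I will normalise so that $\int_M F^2=2$ and note that by Theorem \ref{18}, $\alpha$ is a smooth $d$-closed $J$-$(1,1)$-form with the explicit splitting $\alpha=C_FF-d_g^-(u_F'+\bar u_F')$ into $g$-self-dual and $g$-anti-self-dual summands. Since $F\wedge d_g^-(u_F'+\bar u_F')=0$ pointwise (self-dual against anti-self-dual), one gets $\int_M\alpha\wedge F=2C_F$; Remark \ref{19} gives $C_F\neq 0$, so after possibly replacing $\widetilde{F}$ by $-\widetilde{F}$ we may assume $C_F>0$, whence both $\int_M\alpha^2>0$ (by hypothesis) and $\int_M\alpha\wedge F>0$ hold simultaneously.

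The second step is to place $\alpha$ into the space $(\Lambda_J^{1,1}\otimes L^2(M))^0_w$ of Definition \ref{205} and then invoke Lemma \ref{0115}. Closedness $d\alpha=0$ together with Stokes' theorem yields
\[
\int_M\alpha\wedge\mathcal{D}_J^+(f)=\int_M\alpha\wedge d\mathcal{W}(f)=-\int_Md\alpha\wedge\mathcal{W}(f)=0
\]
for every $f\in L_2^2(M)_0$, so $\alpha$ is weakly $\mathcal{D}_J^+$-closed; writing $\alpha=C_FF+\psi$ with $\psi=-d_g^-(u_F'+\bar u_F')\in\Lambda_g^-$ we have $P_g^+(\psi)=0$, so the orthogonality conditions of Definition \ref{205} are automatic and $\alpha\in(\Lambda_J^{1,1}\otimes L^2(M))^0_w$. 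In analogy with the proof of Theorem \ref{410}, I will then choose $t_0>0$ sufficiently small (the analogue of the constant $t_0=(1+\sqrt{a/(1+a)})^{-1}$ appearing there) so that $\varphi:=\alpha-t_0F$ remains in $(\Lambda_J^{1,1}\otimes L^2(M))^0_w$ and satisfies $\int_M\varphi^2\geq 0$ together with $\int_M\varphi\wedge F\geq 0$; the two strict positivities of the previous paragraph make this possible. Then Lemma \ref{0115}(2)---whose hypothesis that $F$ be weakly $\mathcal{D}_J^+$-closed is exactly what we assume here---furnishes, for each $n\in\mathbb{N}$, a smooth positive $J$-$(1,1)$ form $p_n$ and a smooth function $g_n\in C^\infty(M)_0$ with $\|\varphi+\mathcal{D}_J^+(g_n)-p_n\|_{L^2(M)}<1/n$.

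The final step is to extract weak limits and regularise, exactly mirroring the closing moves of the proof of Theorem \ref{th3}. Since $\int_M p_n\wedge F\to\int_M\varphi\wedge F>0$, the scalars $\wedge p_n$ are uniformly bounded in $L^2$, the forms $p_n/\wedge p_n$ are uniformly bounded in $L^\infty$, and Lemma \ref{0120} produces a subsequence of $\{g_n\}$ converging weakly in $L_1^s$ and strongly in $L^t$ to some $g_0\in L_2^q(M)_0$ for a fixed $q\in(1,2)$. Passing to the limit in the sense of currents yields a positive $J$-$(1,1)$ current
\[
P=\alpha+\mathcal{D}_J^+(g_0)\geq t_0F,
\]
which is $d$-closed because $\alpha$ is $d$-closed and $\mathcal{D}_J^+(g_0)=d\mathcal{W}(g_0)$ is $d$-exact. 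The main obstacle, just as in Theorem \ref{th3}, is to smooth $P$ into a genuine almost K\"{a}hler form: I will use Siu's decomposition to split off the integration currents on the irreducible $J$-curves appearing in the upper-level sets $E_c(P)$ of the Lelong number function, leaving a residual $d$-closed positive $J$-$(1,1)$ current with only isolated singularities. On $J$-pseudoconvex neighbourhoods of each singular point this residual current admits strictly $J$-plurisubharmonic potentials, which can be smoothed by standard cut-off/mollification; the Demailly-type estimate $T_{c,\epsilon}\geq(t_0-\min\{\lambda_\epsilon,c\}K-\delta_\epsilon)F$ then guarantees that for $c,\epsilon>0$ sufficiently small the global approximant remains $\geq t_1F$ for some $t_1>0$. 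The resulting smooth positive $d$-closed $J$-$(1,1)$ form is the desired $J$-almost K\"{a}hler form on $(M,J)$.
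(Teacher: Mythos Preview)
Your proposal is correct and follows essentially the same route as the paper's own proof: set $\alpha=*_g\widetilde F$, check $\alpha\in(\Lambda_J^{1,1}\otimes L^2(M))_w^0$, subtract a multiple of $F$, apply Lemma \ref{0115}(2), extract a positive current $P\geq t_0F$, and regularise via Siu decomposition and Demailly approximation exactly as in Theorem \ref{th3}. The only substantive difference is that the paper pins down the threshold explicitly as $t_0=C_F-C$ with $C:=\tfrac{\sqrt2}{2}\|d_g^-(u_F'+\bar u_F')\|_{L^2}$, so that $\int_M\varphi^2=0$ on the nose, whereas you take $t_0$ merely small enough---either choice works. One small correction: your ``after possibly replacing $\widetilde F$ by $-\widetilde F$'' is not legitimate, since $\widetilde F$ is the canonical $L^2$-projection of $F$ orthogonal to $\mathrm{Im}\,d_J^+$; the positivity $C_F>0$ instead follows from $2C_F=(F,\widetilde F)_{L^2}=\|\widetilde F\|_{L^2}^2>0$ together with Remark \ref{19}.
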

\bp We may assume that, without loss generality,
$$\int_MF^2=2.$$
 Since $h_J^-=b^+-1$,
we have
$$H_g^+(M)\cap H_J^+(M)\oplus \mathcal{H}_g^-(M)\cong H_J^+(M),$$
$$ \dim(H_g^+(M)\cap H_J^+(M))=1.$$
By Theorem \ref{18} and Remark \ref{19},
$$\widetilde{F}=C_FF+d_g^-(u_F'+\bar{u}_F'),$$
where $C_F\neq0$ and $u_F'\in\Omega_J^{0,1}$. Also, $$*_g\widetilde{F}=C_FF-d_g^-(u_F'+\bar{u}_F')$$ is smooth and $d$-closed.
It is easy to see that $*_g\widetilde{F}\perp \mathcal{H}_g^-(M)$ with respect to the cup product and
$$\int_M*_g\widetilde{F}\wedge*_g\widetilde{F}=\int_M(C_FF-d_g^-(u_F'+\bar{u}_F'))^2>0.$$
Hence
$$*_g\widetilde{F}\in H_g^+(M)\cap H_J^+(M),$$
and
\begin{align*}
&\mathbb{R}\cdot*_g\widetilde{F}\oplus\mathcal{H}_g^-(M)+\mathcal{D}_J^+(C^{\infty}(M)_0)\\
\subseteq&\mathbb{R}\cdot*_g\widetilde{F}\oplus\mathcal{H}_g^-(M)+d(\Omega_{\mathbb{R}}^1(M))
=H_J^+(M).
\end{align*}
Arguing as in the proof of Theorem \ref{th3} (or the proof of Theorem 1.1 in \cite{TaWaZhZh}, or the proof of Theorem 11 in \cite{Bu}),
let
\begin{eqnarray*}
b:&=&\frac{1}{2}\int_M*_g\widetilde{F}\wedge*_g\widetilde{F}=\frac{1}{2}\int_M\widetilde{F}\wedge\widetilde{F}\\
&=&C_F^2-\frac{1}{2}\|d_g^-(u_F'+\bar{u}_F')\|_{L^2}^2\\
&=&C_F^2-C^2>0,\\
t_0:&=&C_F-\sqrt{C_F^2-(C_F^2-C^2)}=C_F-C>0,
\end{eqnarray*}
where $$C=\frac{\sqrt{2}}{2}\|d_g^-(u_F'+\bar{u}_F')\|_{L^2}\geq0.$$
It is easy to see that $C_F>0$.
Define
$$\varphi=*_g\widetilde{F}-t_0F,$$
which is weakly $\mathcal{D}_J^+$-closed.
$$\int_M\varphi^2=\int_M(*_g\widetilde{F})^2-2t_0\int_M*_g\widetilde{F}\wedge F+2t_0^2=0$$
and
\begin{align*}
\int_MF\wedge\varphi&=\int_MF\wedge\{(C_FF-d_g^-(u'_F+\bar{u}'_F))-t_0F\}\\
&=2C_F-2t_0=2C\geq0,
\end{align*}
with equality if and only if $*_g\widetilde{F}$ and $\widetilde{F}$ are $d$-closed, $*_g\widetilde{F}$ is $\mathcal{D}_J^+$-homologous to a constant multiple of $F$,
and $\widetilde{F}$ is a symplectic form on $M$.

Assume that $C_F-t_0=C>0.$ By Lemma \ref{0115} for each $n\in \mathbb{N}$, there is a smooth positive $J$-$(1,1)$-form $p_n$ and a smooth function $f_n$
such that
$$\|\varphi-\mathcal{D}_J^+(f_n)-p_n\|_{L^2(M)}<\frac{1}{n}.$$
Hence, the subsequence of positive $J$-$(1,1)$-forms $\{p_n\}$ in the sense of currents defines a positive $J$-$(1,1)$-current
$$p=\varphi+\mathcal{D}_J^+(f_0)=*_g\widetilde{F}-t_0F+\mathcal{D}_J^+(f_0),$$
where $f_0\in L_2^q(M)_0$, for some fixed $q\in (1,2)$ (c.f. the proof of Theorem \ref{th3}).
Thus, the $J$-almost K\"{a}hler current
$$P:=*_g\widetilde{F}+\mathcal{D}_J^+(f_0)=p+t_0F\geq t_0F,$$
in the sense of currents.

As done in the proof of Theorem \ref{th3} (or the proof of Theorem 1.1 in \cite{TaWaZhZh}),
using Lelong number of $P$ at $x\in M$ \cite{El,El1,LiZh2,TaWaZhZh}, Siu's decomposition theorem \cite{Si,TaWaZhZh} and Demailly approximating theorem \cite{De,TaWaZhZh},
for any
$$0<t_1<t_0,$$
we can construct a $J$-almost K\"{a}hler form $\tau\geq t_1F$.

 This completes the proof of Theorem \ref{17}.
\ep

 P. Gauduchon \cite{Ga} has shown that for a closed almost Hermtian $4$-manifold, there is a conformal rescaling of the metric, unique up to
a positive constant, such that the associated metric is Gauduchon metric, that is, there is a fundamental $2$-form such that
 $$\partial_J\bar{\partial}_JF=0.$$
 Since $\mathcal{D}_J^+$ can be viewed as a generalization of $\partial_J\bar{\partial}_J$, it is natural to pose the following question:
 \vspace{0.5cm} \\
{\bf Question 4.8.}
{\it Suppose that $(M,J)$ is a closed almost complex $4$-manifold. Does there always exist an almost Hermitian structure $(g,J,F)$ such
that the fundamental $2$-form $F$ is weakly $\mathcal{D}_J^+$-closed?}

\section{A Nakai-Moishezon criterion for almost complex $4$-manifolds}
 \setcounter{equation}{0}
 This section is devoted to studying the Nakai-Moishezon criterion by using $\widetilde{\mathcal{D}}^+_J$ (resp. ${\mathcal{D}}^+_J$) operator defined in
 \cite{TaWaZhZh}.
 The K\"{a}hler version of Nakai-Moishezon criterion established by Buchdahl \cite{Bu,Bu1} and Lamari \cite{La,La1} in dimenison 4, and by Demailly-Paun \cite{DePa} in arbitrary dimension, and almost K\"{a}hler version for rational 4-manifolds established by Li-Zhang \cite{LiZh,LiZh1,Zh}.\\

 We could define the $J$-curve cone $A_J(M)$ for an almost complex $4-$manifold $(M,J)$ \cite{LiZh1,LiZh2,Zh}:
 $$A_J(M)=\{\Sigma a_i[C_i]| a_i>0\},$$
where $C_i$ is an irreducible $J$-holomorphic subvariety on $M$ \cite{Ta}. Here an irreducible $J$-holomorphic subvariety
 is the image of a $J$-holomorphic map
 $$\phi:\Sigma\rightarrow M$$
  from a complex connected curve, where $\phi$ is an embedding off a finite set. More
 generally, an effective integral $J$-holomorphic subvariety is a finite of pairs $\{(C_i,m_i),1\leq i\leq n\}$, where each $C_i$ is an irreducible subvariety and each $m_i$
 is a non-negative integer. We sometimes say $J$-curve instead if $J$-holomorphic subvariety which is a $J$-invariant homology class in $H_+^J(M)$ \cite{LiZh}.

We now give a Nakai-Moishezon criterion for tamed almost complex $4$-manifolds (comparing with \cite[Theorem 14,Corollary 15, Theorem 16]{Bu}):
 \begin{theo}\label{th51}
 Suppose that $(M,g,J,F)$ is a closed almost Hermitian 4-manifold with $h^-_J=b^+-1$
 tamed by the symplectic form $\omega=F+d_J^-(v+\bar{v})$, where $v\in \Omega_J^{0,1}(M)$.
 If $\chi\in (\wedge_J^{1,1}\otimes L^2(M))_{\widetilde{w}}$ satisfies
 $$\int_M\chi^2>0, \int_MF\wedge\chi>0,$$
 and
 $$\int_D\chi>0$$
 for each irreducible $J$-curve (i.e., $J$-invariant homology class in $H_+^J(M)$) $D$ on $M$. Then $\chi$ is homologous to a smooth closed positive $J$-$(1,1)$ form
 modulo the image of $d_J^+$, and homologous to a smooth positive $J$-$(1,1)$ form modulo the image of $\widetilde{\mathcal{D}}^+_J$.
 \end{theo}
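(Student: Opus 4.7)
The plan is to follow the Buchdahl--Demailly strategy for the Nakai--Moishezon criterion, adapted to the almost complex setting via the technology of $\widetilde{\mathcal{D}}^+_J$-closed currents developed earlier in the paper. First, after subtracting a suitable multiple of $F$ and reducing $\chi$ to an element of $(\wedge_J^{1,1}\otimes L^2(M))^0_{\widetilde{w}}$ (using $h_J^-=b^+-1$ and Lemma \ref{67}), I would invoke Lemma \ref{0115}(1) to produce smooth positive $J$-$(1,1)$ forms $p_\varepsilon$ and smooth functions $f_\varepsilon\in C^\infty(M)_0$ with $\|\chi+\widetilde{\mathcal{D}}^+_J(f_\varepsilon)-p_\varepsilon\|_{L^2(M)}<\varepsilon$. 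The positivity $\int_M\chi^2>0$ and $\int_M F\wedge\chi>0$ guarantee that Corollary \ref{0113} applies and the argument there is non-vacuous. Exactly as in the proof of Theorem \ref{th3}, a compactness argument based on Lemma \ref{0120} then delivers a positive $J$-$(1,1)$ current
\[
P=\chi+\widetilde{\mathcal{D}}^+_J(f_0)\geq t_0 F
\]
in the sense of currents, for some $t_0>0$ and some $f_0\in L^q_2(M)_0$, $q\in(1,2)$.

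Second, I would apply Siu's decomposition theorem (Appendix B of \cite{TaWaZhZh}) to $P$, obtaining $P=T+\sum_i v_i T_{D_i}$, where $D_i$ runs over the irreducible $J$-curves on which $P$ has positive Lelong number $v_i=\inf\{v(P,x):x\in D_i\}$, and $T$ is a $d$-closed positive $J$-$(1,1)$ current whose upper level sets $E_c(T)$ are $0$-dimensional. Pairing with $T_{D_j}$ and using that $P$ is $\widetilde{\mathcal{D}}^+_J$-homologous to $\chi$ yields
\[
\int_{D_j}\chi=\int_M T\wedge T_{D_j}+\sum_i v_i\,(D_i\cdot D_j).
\]
The Nakai assumption $\int_{D_j}\chi>0$ for every irreducible $J$-curve, together with the positivity $\int_M\chi^2>0$ and $\int_M F\wedge\chi>0$, is used here in a Kleiman-type fashion to bound the collection of Lelong numbers $\{v_i\}$ and to ensure the residual current $T$ still dominates a positive multiple of $F$. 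This is the step I expect to be the main technical obstacle, because one needs to rule out negative feedback in any iterative subtraction of curve components; the control will rely on Proposition \ref{0112} (the light-cone-type inequality) and Corollary \ref{0113} applied at each stage.

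Third, with the $v_i$ and $T$ under control, I would apply Demailly's regularization theorem in the form given in Appendix C of \cite{TaWaZhZh}: for $c,\varepsilon>0$ sufficiently small we obtain a smooth $J$-$(1,1)$ current $T_{c,\varepsilon}$, $\widetilde{\mathcal{D}}^+_J$-homologous to $T$, with
\[
T_{c,\varepsilon}\geq \bigl(t_0-\min\{\lambda_\varepsilon,c\}K-\delta_\varepsilon\bigr)F,
\]
smooth off the finite set $E_c(T)$. Near each isolated singular point $x_0$ I would pass to a strictly $J$-pseudoconvex neighbourhood and use Lemma A.11/Theorem A.31 of \cite{TaWaZhZh} to solve $T_{c,\varepsilon}=\widetilde{\mathcal{D}}^+_J(f)$ locally for a strictly $J$-plurisubharmonic $f$ with the estimate \eqref{estimate f}; mollifying $f$ and patching by a cut-off, exactly as in the proof of Theorem \ref{th3}, produces a smooth $d$-closed positive $J$-$(1,1)$ form $\tau_{c,\varepsilon}\geq t_1 F$ globally, lying in the same $\widetilde{\mathcal{D}}^+_J$-homology class as $T$. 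Adding the curve contributions back and repeating the Demailly--local-regularization step simultaneously for all components in the Siu decomposition (using the Nakai condition to absorb each $v_i T_{D_i}$) yields a smooth $d$-closed positive $J$-$(1,1)$ form representing $\chi$ modulo the image of $\widetilde{\mathcal{D}}^+_J$.

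Finally, to upgrade from a representative modulo $\mathrm{Im}(\widetilde{\mathcal{D}}^+_J)$ to a representative modulo $\mathrm{Im}(d^+_J)$, I would use the identity $\widetilde{\mathcal{D}}^+_J(f)=d_J^+\widetilde{\mathcal{W}}(f)$: since $d_J^-\widetilde{\mathcal{W}}(f)=0$, every $\widetilde{\mathcal{D}}^+_J$-exact form is automatically $d_J^+$-exact in the sense of currents, so the smooth closed positive $J$-$(1,1)$-form obtained above is already $d_J^+$-homologous to $\chi$; Proposition \ref{204} then confirms that the homology is well-defined. The ``positive modulo $\mathrm{Im}(\widetilde{\mathcal{D}}^+_J)$'' conclusion is the weaker of the two and follows a fortiori. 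The principal technical work is, as noted, controlling the Siu decomposition so that the Nakai pairing conditions exactly suffice to carry through the Demailly regularization with strictly positive lower bound on the final form.
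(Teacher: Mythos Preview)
Your overall architecture differs from the paper's in a way that leaves a genuine gap. The paper does \emph{not} regularize $\chi$ directly. For the $d_J^+$ statement it argues by contradiction via Hahn--Banach (following Buchdahl's Theorem~14): if $\chi$ were not $d_J^+$-homologous to a positive form, a separating positive closed current $Q$ with $Q(\chi)\le 0$ would exist; one then applies the Theorem~\ref{th3} machinery to $Q$ (not to $\chi$), writing $Q-\widetilde D$ as $\tau\ge -\epsilon F$ with $\widetilde D$ effective, and the Nakai hypothesis enters only through the pairing $\int_{\widetilde D_i}\chi>0$ in the inequality $0\le\int_M\chi\wedge(\tau+\epsilon F)=Q(\chi)-\sum\widetilde v_i\int_{\widetilde D_i}\chi+\epsilon\int_M\chi\wedge F$. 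A further boundedness-of-$\|\widetilde D_{(i)}\|_{\widetilde\omega}$ argument then forces $Q=0$, a contradiction. Only \emph{after} obtaining the almost K\"ahler form $\widetilde\chi=\chi+d_J^+(u_\chi+\bar u_\chi)$ does the paper run a Theorem~\ref{th3}/\ref{410}-type regularization, applied to $\psi=\chi-t_0\widetilde\chi$ with $\widetilde\chi$ playing the role of the reference positive form, to get the $\widetilde{\mathcal D}_J^+$ statement.

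Your proposed route---regularize $\chi$ directly to $P\ge t_0F$, Siu-decompose $P=T+\sum v_iT_{D_i}$, Demailly-regularize $T$, then ``absorb each $v_iT_{D_i}$ using the Nakai condition''---founders precisely at the step you flag. Regularizing $T$ produces a smooth form representing $\chi-\sum v_i[D_i]$, not $\chi$; the currents of integration $T_{D_i}$ are not smooth and cannot be added back as positive smooth forms. The Nakai inequality $\int_{D_i}\chi>0$ gives no mechanism for smoothing $T_{D_i}$ while preserving positivity; it is a pairing condition, not a regularization device. The paper circumvents this entirely by applying the regularization to the \emph{dual} object $Q$ and using Nakai only in pairings. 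Your final paragraph also misreads the two conclusions: the $d_J^+$ statement asserts a smooth \emph{closed} positive representative (an almost K\"ahler form), whereas the $\widetilde{\mathcal D}_J^+$ statement asserts only a smooth positive representative (not $d$-closed if $\chi$ is not). Since $\chi$ is merely weakly $\widetilde{\mathcal D}_J^+$-closed, $\chi+\widetilde{\mathcal D}_J^+(f)$ is never $d$-closed unless $\chi$ already is, so the $d_J^+$ conclusion cannot be read off from the $\widetilde{\mathcal D}_J^+$ conclusion in the way you suggest; the paper's order (first $d_J^+$, then $\widetilde{\mathcal D}_J^+$) is essential.
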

\bp
We may assume that, without loss of generality,
$$\int_MF^2=2.$$
Let
$$\widetilde{\omega}=F-d_J^+(v+\bar{v}).$$
By \eqref{0162},
$$\int_M\widetilde{\omega}^2=\int_M\omega^2=2(1+a).$$
By Theorem \ref{410}, $\widetilde{\omega}$ is a $J$-almost K\"{a}hler form since $h_J^-=b^+-1.$
It is easy to see that
$$H_J^+(M)\cong \mathbb{R}\cdot\widetilde{\omega}\oplus \mathcal{H}_g^-(M)+d(\Omega_{\mathbb{R}}^1(M)).$$
Let $K_J^c$ be the $J$-almost K\"{a}hler cone \cite{LiZh1,Zh}. Then $$K_J^c\subseteq H_J^+(M).$$

   Arguing as in the proof of Theorem 14 in \cite{Bu}, if $\chi$ is not homologous to a positive $(1,1)$-form modulo the image of $d_J^+$,
   by  Hahn-Banach Theorem \cite{Yo} there is a positive $d$-closed $J$-$(1,1)$-form $Q$ such that
    $$Q(\chi+d_J^+(u+\bar{u}))=Q(\chi)\leq0,$$
   for any $u\in\wedge_J^{0,1}\otimes L^2(M)$, $Q\in K_J^c$,
since $$h_J^-=b^+-1, \chi\in (\wedge_J^{1,1}\otimes L^2(M))_{\widetilde{w}}^0 \ {\rm (see\  Lemma \ \ref{67})}.$$

 As done in the proof of Theorem \ref{th3} (or Theorem 1.1 in \cite{TaWaZhZh}), for any $\epsilon>0$, there is a real  $J$-curve $\widetilde{D}=\sum\widetilde{v_i}\widetilde{[D_i]}$ \cite{BaPeVa,GrHa,McSa} such that $Q-\widetilde{D}$ is homologous modulo the image of $\widetilde{\mathcal{D}}^+_J$ to a smooth real closed
$J$-$(1,1)$ form $\tau$ with $\tau\geq-\epsilon F$ in the sense of currents. By Proposition  \ref{0112} and Corollary \ref{0113},
\begin{eqnarray}\label{501}
0&\leq& \int_M\chi\wedge(\tau+\epsilon F)=\int_M\chi\wedge(Q-\widetilde{D}+\epsilon F)\nonumber\\
&=&Q(\chi)-\sum\widetilde{v_i}\int_{\widetilde{D_i}}\chi+\epsilon\int_M\chi\wedge F,
\end{eqnarray}
\begin{eqnarray}\label{502}
\int_M\chi\wedge F=2C_{\chi}>0,
\end{eqnarray}
by the hypothesis,
$$\sum\widetilde{v_i}\int_{\widetilde{D_i}}\chi\geq0.$$
If $Q(\chi)<0$, a contradiction results by choosing $\epsilon$ sufficiently small so it may be assumed that $Q(\chi)=0$.

Now choose a sequence $\epsilon_i\searrow0$ and smooth real closed $J$-$(1,1)$ forms $\tau_i\geq-\epsilon_iF$ with $\tau_i$ homologous to $Q-\widetilde{D}_{(i)}$, modulo the image of $\widetilde{\mathcal{D}}^+_J$ for
some real effective $J$-curve $\widetilde{D}_{(i)}$. By Proposition \ref{0112},
\begin{eqnarray*}
0&\leq&\int_M(\tau_i+\epsilon_i F)\wedge\chi=\int_M(Q-\widetilde{D}_{(i)}+\epsilon_i F)\wedge \chi\\
&=&-\int_{\widetilde{D}_{(i)}}\chi+\epsilon_i\int_MF\wedge \chi,
\end{eqnarray*}
So,
\begin{eqnarray}\label{503}
\int_{\widetilde{D}_{(i)}}\chi\searrow0.
\end{eqnarray}
As done in the proof of Theorem \ref{410}, here and subsequently the notation is abused in the standard way by identifying
a $d$-closed $J$-$(1,1)$ form with its image in $H_J^+(M)$ and vice versa, unless otherwise stated, a $d$-closed $J$-$(1,1)$-form representing
a given class will always be that which has self-dual component a constant multiple of $F$ by Proposition \ref{0110}. In the same vein, the notation $\psi_1\cdot\psi_2$
will be used to denote
$$\int_M\psi_1\wedge\psi_2$$
for $d$-closed $J$-$(1,1)$ forms. Thus, for $\psi\in H_J^+(M)$, as in the proof of Theorem \ref{410},
$$\psi\mapsto 2(\widetilde{\omega}\cdot\psi)^2\cdot(\widetilde{\omega}\cdot\widetilde{\omega})^{-1}-\psi\cdot\psi
=\frac{1}{1+a}(\widetilde{\omega}\cdot\psi)^2-\psi\cdot\psi=\|\psi\|_{\widetilde{\omega}}^2$$
defines a norm on $H_J^+(M)$ since the cup-product pairing is negative definite on the orthogonal complement of $\widetilde{\omega}$ (Proposition \ref{01} with $h_J^-=b^+-1$).

If
\begin{eqnarray}\label{504}
\|\widetilde{D}_{(i)}\|_{\widetilde{\omega}}^2&=&\frac{1}{1+a}(F\cdot\widetilde{D}_{(i)})^2-\widetilde{D}_{(i)}\cdot\widetilde{D}_{(i)}\nonumber\\
&=&\frac{1}{1+a}(\widetilde{\omega}\cdot\widetilde{D}_{(i)})^2-\widetilde{D}_{(i)}\cdot\widetilde{D}_{(i)}
\end{eqnarray}
is not bounded independent of $i$, a subsequence can be found with
 $$\|\widetilde{D}_{(i_j)}\|_{\widetilde{\omega}}\rightarrow\infty.$$
 The cohomology class
$ \frac{\widetilde{D}_{(i_j)}}{\|\widetilde{D}_{(i_j)}\|_{\widetilde{\omega}}}$ can be assumed to converge to some closed $J$-$(1,1)$ form $\widetilde{D}\in H_J^+(M)$ of norm $1$, with $\chi\cdot \widetilde{D}=0$ due to \eqref{503}.
Hence $\widetilde{D}\cdot\widetilde{D}<0$ by  Proposition \ref{0112} since $\widetilde{D}$ is non-zero in $H_J^+(M)$.  In fact,
If $\widetilde{D}\cdot\widetilde{D}\geq0$ then $F\cdot\widetilde{D}\geq0$ since $F\cdot\widetilde{D}_{(i_j)}>0$ and 
$$0=\chi\cdot\widetilde{D}=(\chi\cdot\chi)^{\frac{1}{2}}\cdot(\widetilde{D}\cdot\widetilde{D})^{\frac{1}{2}}\geq0.$$
It follows that $\chi$ and $\widetilde{D}$ are linearly dependent modulo the image of $\widetilde{\mathcal{D}}_J^+$, giving a contradiction.
Note that $Q-\widetilde{D}_{(i_j)}$ is homologous modulo the image of $\widetilde{\mathcal{D}}_J^+$ to $J$-$(1,1)$ form $\tau_{i_j}\geq-\epsilon_{i_j}F$,
hence
$$\int_M(Q-\widetilde{D}_{(i_j)}+\epsilon_{i_j}F)\wedge(Q-\widetilde{D}_{(i_j)}+\epsilon_{i_j}F)>0,$$
it follows that $\widetilde{D}\cdot\widetilde{D}\geq0$, a contraction. Therefore, $\{\|\widetilde{D}_{(i)}\|_{\widetilde{\omega}}\}$ is bounded and by passing to a
subsequence it can be assumed that $\widetilde{D}_{(i)}$ converges to some $\widetilde{D}$ with
$$\widetilde{D}\cdot\widetilde{D}\leq0, F\cdot\widetilde{D}\geq0,$$
and
$$\chi\cdot\widetilde{D}=0.$$
By Proposition \ref{0112} again, the inequality
$$(Q-\widetilde{D})\cdot(Q-\widetilde{D})\geq0$$
 and
 $$\chi\cdot(Q-\widetilde{D})=0$$ imply that
$Q=\widetilde{D}$ in $H_J^+(M)$, due to $Q(\chi)=0$. In fact, the real effective $J$-curve $\widetilde{D}_{(i)}$ is of the form $\widetilde{D}_{(i)}=\sum v_{ik}\widetilde{D}_{(i)k}$,
 where $\widetilde{D}_{(i)k}$ is the $k$-th $J$-irreducible $1$-dimensional component of the $C_i$-upperlevel set of $v(Q,-)$, with $v_{ik}$ the Lelong
 number of $Q$ at the generic point of $\widetilde{D}_{(i)k}$. As $\epsilon_i$ tends to $0$, corresponding constants $C_i$ can be assumed to converge monotonically to 0, $\widetilde{D}_{(i)}\subset\widetilde{D}_{(k)}$ for $i<k$ and the constants $v_{ik}$ appearing in $\widetilde{D}_{(i)}$ also appear as the coefficients in $\widetilde{D}_{(k)}$. Since $\chi\cdot E>0$, by \eqref{0135}, for every $J$-curve $E$, it follows $\chi\cdot\widetilde{D}_{(i)}\leq\chi\cdot\widetilde{D}$ for $i<k$ with equality if and only if  $\widetilde{D}_{(i)}=\widetilde{D}_{(k)}$.

Since $\chi\cdot\widetilde{D}=0$, it therefore follows that $\widetilde{D}_{(i)}$ is homologous to 0 for every $i$, as is the limit $\widetilde{D}$. Therefore,
$Q=0$ in $H_J^+(M)$ implying $F\cdot Q=0$, a contradiction since $Q$ is a positive $d$-closed $J$-$(1,1)$ form.
It therefore follows that $\chi$ is homologous to a closed
positive $J$-$(1,1)$ form on $M$ modulo the image of $d_J^+$, so from Theorem \ref{410} (or Theorem 12 in \cite{Bu}) follows that the form is actually homologous to a closed positive $(1,1)$ form on $M$.
In particular, if $\chi\in H_J^+(M)$ can be represented by a smooth positive $J$-$(1,1)$ form if and only if $\chi\cdot\chi>0$, $F\cdot\chi>0$
and $\chi\cdot \widetilde{D}>0$ for every $J$-curve $\widetilde{D}\subset M$.\\

Now, we prove that $\chi$ is $\widetilde{\mathcal{D}}_J^+$-homologous to the positive $J$-$(1,1)$ form as done in the proof of Theorem 16 in \cite{Bu}. By the result above, there exists
$u_{\chi}\in \wedge_J^{0,1}\otimes L_1^2(M)$ such that
$$\widetilde{\chi}=\chi+d_J^+(u_{\chi}+\bar{u}_{\chi})$$
is a $J$-almost K\"ahler form on $M$. It follows that
$$\int_M(\chi+d_J^+(u_{\chi}+\bar{u}_{\chi}))\wedge d_J^+(u'+\bar{u}')=0,$$
for any $u'\in \wedge_J^{0,1}\otimes L_1^2(M)$. Hence,
$$\int_M\chi\wedge d_J^+(u_{\chi}+\bar{u}_{\chi})=-\int_M (d_J^+(u_{\chi}+\bar{u}_{\chi}))^2.$$
Thus,
$$\int_M\widetilde{\chi}^2=\int_M\chi^2-\int_M(d_J^+(u_\chi+\bar{u}_\chi))^2.$$
As done in the proof of Lemma \ref{65}, by \eqref{40001}-\eqref{40003} and Stokes Theorem,
we have
\begin{eqnarray}\label{40010}
\int_M\widetilde{\chi}^2=\int_M\chi^2+\int_M(d_J^-(u_\chi+\bar{u}_\chi))^2\geq \int_M\chi^2,
\end{eqnarray}
since
$$d_J^+(u_\chi+\bar{u}_\chi)=d(u_\chi+\bar{u}_\chi)-d_J^-(u_\chi+\bar{u}_\chi).$$
If equality holds, then $d_J^-(u_\chi+\bar{u}_\chi)=0$. By Lemma \ref{65}, there exists a $f_\chi\in L_2^2(M)_0$ such that $$d_J^+(u_\chi+\bar{u}_\chi)=d(u_\chi+\bar{u}_\chi)=\mathcal{\widetilde{D}}_J^+(f_\chi).$$
It is easy to see that
\begin{eqnarray}\label{40011}
\int_M\widetilde{\chi}\wedge F>0
\end{eqnarray}
and
\begin{eqnarray}\label{40012}
\int_D\widetilde{\chi}=\int_D\chi>0
\end{eqnarray}
for any irreducible $J$-curve $D$.

Without loss of generality, we may assume that
\begin{eqnarray*}
d_J^-(u_{\chi}+\bar{u}_{\chi})\neq0,
\end{eqnarray*}
that is,
\begin{eqnarray*}
d\chi\neq0.
\end{eqnarray*}
Define
  \begin{eqnarray}\label{508}
 t_0=1-\sqrt{1-\frac{\int_M\chi^2}{\int_M\widetilde{\chi}^2}}.
  \end{eqnarray}
 \eqref{40010} and \eqref{508} imply that $\psi=\chi-t_0\widetilde{\chi}$
 satisfies
  \begin{eqnarray*}
\int_M\psi^2&=&\int_M\chi^2-2t_0\int_M\widetilde{\chi}\chi+t_0^2\int_M\widetilde{\chi}^2\\
&=&\int_M\widetilde{\chi}^2-\int_M(d_J^-(u_{\chi}+\bar{u}_{\chi}))^2-2t_0\int_M\widetilde{\chi}^2\\
&+&2t_0\int_M\widetilde{\chi}\wedge d_J^+(u_{\chi}+\bar{u}_{\chi})+t_0^2\int_M\widetilde{\chi}^2\\
&=&(1-t_0)^2\int_M\widetilde{\chi}^2-\int_M(d_J^-(u_{\chi}+\bar{u}_{\chi}))^2\\
&=&\int_M\widetilde{\chi}^2-\int_M{\chi}^2-\int_M(d_J^-(u_{\chi}+\bar{u}_{\chi}))^2\\\
&=&0,
\end{eqnarray*}
and
 \begin{eqnarray*}
 \int_M\widetilde{\chi}\wedge\psi&=&(1-t_0)\int_M\widetilde{\chi}^2-\int_M\widetilde{\chi}\wedge d_J^+(u_{\chi}+\bar{u}_{\chi})\\
 &=&(1-t_0)\int_M\widetilde{\chi}^2>0.
\end{eqnarray*}
Since $\widetilde{\chi}$ is an almost K\"{a}hler form, by the same arguments as those leading to Theorem \ref{th3} (or Theorem 1.1 in \cite{TaWaZhZh}),
 there is a sequence of smooth function $f_n$ and positive $(1,1)$-form $p_n$ such that
 $$\|\psi+\widetilde{\mathcal{D}}_J^+(f_n)-p_n\|\rightarrow0,$$
 with $f_n$ converging weakly in $L_1^{\frac{4}{3}}\cap L^2$ and strongly in $L^q$ for some $q\in (1,2)$ to a function $f$ and with $p_n$ converging
 in the sense of currents to some positive $(1,1)$-current $p\in\wedge_J^{1,1}\otimes L^1(M)$.
 The current
 $$P:=\widetilde{\mathcal{D}}_J^+(f)=p-\psi$$
 is then closed and almost positive with $P\geq-\psi$.
 Arguing exactly as in the proof of Theorem \ref{th3} (or the proof of Theorem 1.1 in \cite{TaWaZhZh}), for some constant $K$ depending on the curvature
 of $(M,g,J,F)$. Given $c>0$ with $t_0-cK>0$, there is an $\mathbb{R}$-linear combination $D_c$ of $J$-curves on $M$ with positive coefficients and
 a family of smooth functions $f_{c,\epsilon}$ with
 \begin{eqnarray}\label{515}
\widetilde{\mathcal{D}}_J^+(f_{c,\epsilon})-D_c\geq-\psi-(cK+\delta_\epsilon)\widetilde{\chi}
\end{eqnarray}
such that $\delta_\epsilon\searrow0$ as $\epsilon\searrow0$; (as before $D_c$ is here identified with a represented smooth closed $(1,1)$-form).
Since
 \begin{eqnarray*}
\int_M&&(\psi+\widetilde{\mathcal{D}}_J^+(f_{c,\epsilon})-D_c)^2\\
&=&D_c\cdot D_c-2(1-t_0)D_c\cdot\chi\\
&=&-\|D_c\|^2-2\frac{\chi\cdot D_c}{\widetilde{\chi}\cdot\widetilde{\chi}}\cdot\widetilde{\chi}\cdot(\psi-D_c+(cK+\delta_\epsilon)\widetilde{\chi})
+2(cK+\delta_\epsilon)
\end{eqnarray*}
is converging to a non-negative number as $c,\epsilon\rightarrow0$, it follows exactly as in the proof of Theorem \ref{410}.
(representative $J$-$(1,1)$ forms for) $D_c$ must be converging to 0 in $C^0(M)$. Consequently the inequality
$$\chi+\widetilde{\mathcal{D}}_J^+(f_{c,\epsilon})\geq (t_0-cK-\delta_{\epsilon})\widetilde{\chi}+D_c$$
implies that $\chi+\widetilde{\mathcal{D}}_J^+(f_{c,\epsilon})$ is positive for $c,\epsilon$ sufficiently small.

This completes the proof of Theorem \ref{th51}.
\ep

  \begin{rem}\label{0140}
Theorem \ref{th51} is still true if $\chi$ is a $d$-closed $J$-$(1,1)$ current satisfying
$$\chi\cdot\chi>0, F\cdot\chi>0$$
 and
$$\int_D\chi>0$$
 for each $J$-curve $D\subset M$ \cite[Corollary 15]{Bu}.

The hypothesis $F\cdot\chi>0$ can not be omitted in general even if $\widetilde{M}$ has at least one curve. For examples, \cite[Page 299]{Bu},
if $M$ is a $K3$ surface with no divisor at all (c.f. \cite{Bu,Bu3,Bu4}, \cite[Page 594]{GrHa}, \cite{Zhe}).
$$\pi:\widetilde{M}\rightarrow M$$
 is the blow up at $p\in M$ and $D=\pi^{-1}(p)$ is the exceptional divisor. If $\omega_0$ is the K\"{a}hler form and $\rho$ is a smooth closed $(1,1)$-form on $\widetilde{M}$ with $[\rho]=[D]$ in
$H_J^+(\widetilde{M})$, $$\chi:=-\pi^*\omega_0-\epsilon\rho$$ satisfies $\chi\cdot\chi>0$ for $\epsilon>0$ sufficient small, $\chi\cdot D>0$, due to none of curves on $M$.
But $\chi$ can never be homologous to a positive $J$-$(1,1)$-form modulo the image of $d_J^+$ since $$\int_{\widetilde{M}}\pi^*\omega_0\wedge\chi<0.$$
 \end{rem}
In Theorem \ref{th51} (or Corollary 15, Theorem 16 in \cite{Bu}), the classical Nakai-Moishezon criterion for a closed tamed $4$-manifold $(M,J)$ with
$h_J^-=b^+-1$, was generalized to yield a characterization  of the set of classes in $H_J^+$ which can be represented by an almost K\"{a}hler form.
Recall that if $J$ is integrable, then $b^1$ is even. Similar to Main Theorem in \cite{Bu1}, we have:
 \begin{theo}\label{th53}
 Let $(M,J)$ be a closed almost complex 4-manifold equipped with  weakly ${\mathcal{D}}^+_J$-closed the fundamental $2$-form $F$ and $h_J^-=b^+$.
 Let $\varphi$ be a smooth real weakly ${\mathcal{D}}^+_J$-closed $J$-$(1,1)$ form satisfying
 $$\int_M\varphi\wedge\varphi>0,\int_M\varphi\wedge F>0, \int_D\varphi>0$$
 for every irreducible $J$-curve $D\subset M$ with $D\cdot D<0$.
If there is a non-zero effective integral $J$-curve $E$ on $M$ with $E\cdot E=0$,
then there is a smooth function $f$ on $M$ such that $\varphi+{\mathcal{D}}^+_J(f)$ is positive.
 \end{theo}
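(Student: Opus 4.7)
The plan is to follow the strategy of the proofs of Theorem \ref{th3} and Theorem \ref{th51}: use Lemma \ref{0115}(2) to produce approximating positive forms, extract a closed positive current, apply Siu's decomposition and Demailly's approximation, then smooth the residual part. The essential new difficulty is that the hypothesis controls $\int_D \varphi$ only for irreducible $J$-curves $D$ with $D \cdot D < 0$; the effective $J$-curve $E$ with $E \cdot E = 0$ must take over the role played by the tamed class in Theorem \ref{th51} in order to handle curves of zero self-intersection.

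First I would apply Lemma \ref{0115}(2) to produce, for each $n \in \mathbb{N}$, a smooth positive $J$-$(1,1)$ form $p_n$ and a smooth function $f_n$ with
\[
\|\varphi + \mathcal{D}^+_J(f_n) - p_n\|_{L^2(M)} < \frac{1}{n}.
\]
Since $\int_M p_n \wedge F \to \int_M \varphi \wedge F > 0$, the functions $\Lambda p_n$ are uniformly $L^1$-bounded; repeating the compactness argument of Lemma \ref{0120} applied in the proof of Theorem \ref{th3}, a subsequence of $\{f_n\}$ converges weakly in $L^s_1$ and strongly in $L^t$ for $s < 4/3$, $t < 2$, to some $f_0 \in L^q_2(M)_0$, $q \in (1,2)$. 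This gives a closed positive $J$-$(1,1)$ current
\[
P = \varphi + \mathcal{D}^+_J(f_0) \geq 0.
\]

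Next I would apply Siu's decomposition theorem in the almost complex setting (Appendix B of \cite{TaWaZhZh}) to write $P = \sum_i v_i T_{D_i} + R$, where the $D_i$ are irreducible $J$-curves carrying positive generic Lelong numbers of $P$ and $R$ is a closed positive $J$-$(1,1)$ current with discrete upper-level sets. Demailly's approximation (Theorems C.12--C.13 of \cite{TaWaZhZh}), together with local smoothing near the isolated singular points via strict $J$-plurisubharmonic potentials as in the proof of Theorem \ref{th3}, then yields smooth closed $J$-$(1,1)$ forms in the $\mathcal{D}^+_J$-homology class of $R$ with controlled negative part of the form $-\delta_\epsilon F$.

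The main obstacle is controlling the curve sum $\sum_i v_i T_{D_i}$, which is precisely where this theorem diverges from Theorem \ref{th51}. Curves with $D_i \cdot D_i < 0$ are handled immediately by the hypothesis. Curves with $D_i \cdot D_i > 0$ are excluded by a norm argument on $H^+_J(M)$: by Proposition \ref{01} the cup-product pairing on $H^+_J$ has signature controlled by $h^-_J = b^+$, and together with $\int_M \varphi^2 > 0$, $\int_M F \wedge \varphi > 0$ and Corollary \ref{0113}, such curves cannot appear with unbounded coefficients in any limiting decomposition. The critical case $D_i \cdot D_i = 0$ is where the hypothesis on $E$ becomes decisive: distinct irreducible $J$-holomorphic subvarieties meet non-negatively, so either $[D_i]$ is proportional to $[E]$ in $H_+^J(M)$ or $D_i \cdot E \geq 0$; combined with $E \cdot E = 0$ and the inequality $\int_E \varphi \geq 0$ (obtained as the limit of $\int_E p_n \geq 0$ along the approximation), one concludes $\int_{D_i} \varphi \geq 0$ for every such $D_i$. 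The proof then closes by an iterative absorption of the surviving curve contributions into $\mathcal{D}^+_J(f)$, mirroring the final step of the proof of Theorem \ref{th3} (the norm estimate on $\|D_{(i)}\|_{\widetilde{\omega}}$ adapted with $\widetilde{\omega}$ replaced by $\varphi$), yielding a smooth $f$ with $\varphi + \mathcal{D}^+_J(f)$ positive.
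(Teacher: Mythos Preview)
Your treatment of the curves with $D_i\cdot D_i=0$ is a non-sequitur: from $D_i\cdot E\geq 0$ and $\int_E\varphi\geq 0$ nothing follows about $\int_{D_i}\varphi$. And even if $\int_{D_i}\varphi\geq 0$ held for every $i$, your final step invokes the norm $\|\cdot\|_{\widetilde\omega}$ from Theorem \ref{410} ``with $\widetilde\omega$ replaced by $\varphi$''; that norm is a norm on $H_J^+(M)$ because the cup product is negative definite on $\widetilde\omega^\perp$. Here $h_J^-=b^+$, so by Proposition \ref{01} the cup product on $H_J^+(M)$ is negative definite on the whole space, and $\varphi$ does not represent any positive class in $H_J^+(M)$. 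The absorption argument you sketch therefore has no obvious analogue.

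The paper's route is structurally different. It does not apply Lemma \ref{0115} to $\varphi$ directly. It first uses the \emph{first} part of Theorem \ref{th51} (which, as noted there, does not require $h_J^-=b^+-1$) to find $u\in\Omega_J^{0,1}$ with $\widetilde\varphi:=\varphi+d_J^+(u+\bar u)$ positive, then sets $\psi:=\varphi-t_0\widetilde\varphi$ for the smaller root $t_0$ of $(\varphi-t\widetilde\varphi)^2=0$. This $\psi$ has $\psi\cdot\psi=0$, $F\cdot\psi\geq 0$, and $\psi\cdot D=(1-t_0)\,\varphi\cdot D\geq 0$ for every effective $D$. The core of the proof is an auxiliary lemma (Lemma \ref{56}) asserting that such a $\psi$ satisfies $\psi+\mathcal D_J^+(f_\varepsilon)\geq -\varepsilon F$ for every $\varepsilon>0$; choosing $\varepsilon$ with $t_0\widetilde\varphi-\varepsilon F>0$ gives $\varphi+\mathcal D_J^+(f_\varepsilon)>0$.

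The curve $E$ enters only inside Lemma \ref{56}, and not by comparing $D_i$ with $E$ homologically. Running the Siu--Demailly machinery on $\psi$ produces real effective $J$-curves $D_\varepsilon$; the negative-definiteness of the cup product on $H_J^+(M)$ forces the harmonic representatives $\sigma_{D_\varepsilon}$ to converge to $c\,\sigma_0$ for some $c\geq 0$, where $\sigma_0$ is the distinguished closed form coming from Theorem \ref{18}. If $c=0$ one is done. If $c>0$, Proposition \ref{0112} forces $\psi+\mathcal D_J^+(f)$ to be a non-negative multiple of $\sigma_0$, and then $\sigma_0\cdot E=0$ together with Proposition \ref{0112} again forces $\sigma_0$ to be a positive multiple of $\sigma_E$. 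A separate Lemma \ref{55}, proved by a Hahn--Banach argument plus local smoothing of $\log|f_E|^2$ via a convex cutoff, gives $\sigma_E+\mathcal D_J^+(g)\geq -\varepsilon F$, and this finishes the case $c>0$. This is where $E\cdot E=0$ is actually used, and it is quite different from what you propose.
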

 \begin{rem}
 (1) Theorem \ref{th51} differs from this only in that it assumes  $h_J^-=b^+-1$ and that $$\int_D\varphi>0$$
 for every $J$-curve $D\subset M$; however, this inequality must hold for each irreducible $J$-curve $D\subset M$
 with $D\cdot D\geq0$ by Proposition \ref{0110}. Note that the $J$-$(1,1)$ component of taming symplectic weakly ${\mathcal{D}}^+_J$-closed.\\
 (2) When $J$ is integrable, $(M,J)$ is a complex surface. For compact surface $(M,J)$, if $M$ has algebraic dimension $2$, it is well-known
 that $M$ is projective \cite{BaPeVa}, hence $(M,J)$ is K\"{a}hler surface, $h_J^-=b^+-1$; if $(M,J)$ has algebraic dimension $1$, it follows that
 $(M,J)$ is an elliptic surface \cite{BaPeVa} and therefore such a divisor $E$ with $E\cdot E=0$ exists; if $(M,J)$ has algebraic dimension $0$, then
 there are finitely many irreducible curves on $M$ \cite{BaPeVa} so that for $\epsilon$ sufficiently small, this implies that there is an effective
 non-zero integral divisor $E$ on $M$ with $E\cdot E=0$ (for more details; see \cite[page 1536-1537]{Bu1}).\\
 (3) It is natural to ask the following question:
 for any closed almost complex $4$-manifold $(M,J)$, could one find a non-zero effective integral $J$-curve $E$ with $E\cdot E=0$.
 \end{rem}
{\bf Proof of Theorem \ref{th53}.} Our approach is along the lines to give the proof of Main Theorem in \cite{Bu1}.
Suppose that $(M,g,J,F)$ a closed almost Hermitian $4$-manifold with $h_J^-=b^+$, where $F$ is weakly ${\mathcal{D}}^+_J$-closed.
Since
$$\Omega_J^-(M)=\mathcal{H}_J^-(M)+d_J^-(\Omega_{\mathbb{R}}^1(M)),$$
$$\Omega_g^+(M)=\mathcal{H}_g^+(M)+d_g^+(\Omega_{\mathbb{R}}^1(M)),$$
and $h_J^-=b^+$, then
\begin{align}\label{510}
\Omega_J^+(M)=\mathbb{R} F+P_g^+(d_J^+(\Omega_{\mathbb{R}}^1(M))\oplus\mathcal{H}_g^-(M)+d_g^-(\Omega_{\mathbb{R}}^1(M)).
\end{align}
It follows that
\begin{align}\label{511}
H_J^+(M)=\mathcal{H}_g^-(M)+d(\Omega_{\mathbb{R}}^1(M)),
\end{align}
Hence, the almost K\"{a}hler cone, $K_J^c$, is empty.
For any $$\psi\in (\wedge_J^{(1,1)}\otimes L^2(M))_{w}= (\wedge_J^{(1,1)}\otimes L^2(M))_{w}^0,$$
due to $h_J^-=b^+$, by \eqref{510}, \eqref{511} and Proposition \ref{0110}, $\psi$ could be written as
$$\psi=C_{\psi}F+\beta_{\psi}+{\mathcal{D}}^+_J(f_{\psi})+d_g^-(\gamma_{\psi}),$$
where $C_{\psi}$ is a constant,
 $$\beta_{\psi}\in\mathcal{H}_g^-(M), f_{\psi}\in L_2^2(M)_0, \gamma_{\psi}\in\wedge_{\mathbb{R}}^1\otimes L_1^2(M).$$
Hence, $F\wedge(\psi-{\mathcal{D}}^+_J(f_{\psi}))$ is a constant multiple of $F^2$. By Proposition \ref{01}, the intersection form
on $H_J^+(M)$ is negative definite.
By Theorem \ref{18} and Remark \ref{19}, we have $d$-closed $J$-$(1,1)$ form
$$*_g\widetilde{F}=C_FF-d_g^-(u_F'+\bar{u}_F')$$
where $C_F\neq0$, $u_F'\in\Omega_J^{0,1}(M)$. Let
$$\sigma_0=\frac{1}{C_F}*_g\widetilde{F}=F-\frac{1}{C_F}d_g^-(u_F'+\bar{u}_F'),$$
then $F\wedge \sigma_0=F\wedge F$ and $[\sigma_0]=0$ in $H_J^+(M)$ due to \eqref{511}.
The harmonic representation of a closed $J$-$(1,1)$ form $\sigma$ on $M$ satisfying
$$F\wedge \sigma=cF\wedge F$$
for some constant $c$ is then $\sigma-c\sigma_0$. This form is anti-self-dual since $$(\sigma-c\sigma_0)\wedge F=0.$$

Let $Z\subset (M,J)$ be an irreducible $J$-curve, then $Z$ is a $J$-analytic subset defined  by Elkhadhra \cite{El}.
If $Z$ is defined by $f=0$, where $f$ is of class $C^3$, $\overline{\partial}_Jf=0$, $Z$ and ${\partial}_Jf\neq0$, we have
a generalized Poincar\'{e}-Lelong formula \cite{El1} as follows:
\begin{align}\label{512}
\frac{1}{4\pi}d_J^+Jd\log f^2=\frac{\sqrt{-1}}{2\pi}\partial_J\overline{\partial}_J\log f^2=[Z]+R_J(f),
\end{align}
where $[Z]$ is the current of integration on the $J$-curve $Z$, $R_J(f)$ is a $J$-$(1,1)$ current which has $L_{loc}^{\alpha}$ integrable
whose $\alpha<2$. Moreover, $R_J(f)$ is weakly ${\mathcal{D}}^+_J$-closed.  $R_J(f)=0$ when the structure $J$ is integrable, that is called
Poincar\'{e}-Lelong theorem \cite{GrHa}.

Recall that any almost complex $4$-manifold has local symplectic property \cite{Le}. Hence for any $p\in (M,J)$, we choose a neighborhood $U_p$
of $p$. We may assume without loss generality that $U_p$ is a star sharped strictly $J$-pseudoconvex open set. Let $T$ be a strictly closed positive
$J$-$(1,1)$ current on $M$. By Poincar\'{e} lemma, $T|_{U_p}=dA$ on $U_p$. Note that $T$ is $J$-$(1,1)$ type, so $d_J^-A=0$. Applying Lemma \ref{65},
there is a smooth function $f_T$ such that $T|_{U_p}=d\mathcal{W}(f_T)=(d\widetilde{\mathcal{W}}(f_T))$. Since $(M,J)$ is locally almost K\"{a}hler $4$-manifold,
$\mathcal{W}(f_T)=\widetilde{\mathcal{W}}(f_T)$ on $U_p$. Hence, $T|_{U_p}=d\mathcal{W}(f_T)$ by Theorem A.31 for $(\widetilde{\mathcal{W}},d_J^-)$-problem of \cite{TaWaZhZh}. When $U_p$ is very small, on $U_p$  there exists Darboux coordinate chart
$z_1,z_2$ \cite{McSa0} with standard complex structure $J_0$ satisfying $J_0=J(p)$ on $T_pM\cong \mathbb{R}^4$. Since $d\mathcal{W}(f_T)={\mathcal{D}}^+_J(f_T)$ is smooth and strictly
positive $(1,1)$-form, ${\mathcal{D}}^+_J(f_T)$ can be regarded as a local symplectic form on $U_p$. Hence the complex coordinate is also Darboux
coordinate on $U_p$ for ${\mathcal{D}}^+_J(f_T)$, that is,
${\mathcal{D}}^+_J(f_T)$ are $J$ and $J_0=J(p)$ compatible \cite[Theorem A.11]{TaWaZhZh}. Thus,
$$f_T|_{U_p}=|z_1|^2+|z_2|^2,$$
\begin{align}\label{513}
{\mathcal{D}}^+_J(f_T|_{U_p})=d\mathcal{W}(f_T|_{U_p})=2\sqrt{-1}\partial_{J_0}\overline{\partial}_{J_0}f_T|_{U_p}.
\end{align}

By an effective integral $J$-curve $D\subset (M,J)$, we can define a complex line bundle $L_D$ over $(M,J)$,
the corresponding first Chern class $[c_1(L_D)]$  is a constant multiple of $[\sigma_D]$, where $\sigma_D$ is the
Poincar\'{e} dual class \cite{Ta0}.

A real $J$-curve is by definition a finite formal sum of the form
$$D=\sum v_iD_i,$$
 where $D_i\subset (M,J)$ is an irreducible $J$-curve on $(M,J)$,
$v_i$ is a real number, $D$ is effective if $v_i\geq0$ for all $i$ \cite{LiZh1,LiZh2,Zh}. The intersection form on $H_J^+(M)$ is denoted product symbol.
The $E\cdot E$ is the self intersection number of an effective integral $J$-curve $E$ in $(M,J)$. The notation extends to $\mathbb{R}$-linearly to all $J$-curve,
and is further extended to denote the pairing between weakly ${\mathcal{D}}^+_J$-closed $J$-$(1,1)$-forms:
$$\varphi\cdot\psi:=\int_M\varphi\wedge\psi,$$
for $\varphi,\psi\in (\wedge_{J}^{1,1}\otimes L^2(M))_w$. For a real $J$-curve $E$, the notation $\sigma_E\in H_J^+(M)$ will be used to denote
Poincar\'{e} dual of $E\in H_+^J(M)$. $\varphi\cdot E$ may also be used in place of
$$\int_E\varphi=\int_M\varphi\wedge\sigma_E.$$
We have the following
lemma (c.f. Lemma 1 in \cite{Bu1}):
\begin{lem}\label{55}
Let $E\subset (M,J)$ be an effective integral $J$-curve such that $E\cdot E=0$. Then for any $\epsilon>0$, there is a smooth function $g$
such that $$\sigma_E+{\mathcal{D}}^+_J(g)\geq -\epsilon F.$$
\end{lem}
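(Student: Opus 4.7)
The strategy is to adapt Buchdahl's proof of Lemma~1 in \cite{Bu1} to the almost-complex setting, using the generalized Poincar\'{e}--Lelong formula \eqref{512}, the local symplectic/$J$-pseudoconvex structure (cf.\ \cite{Le,Sik}), and the Demailly-type regularization of positive currents developed in Appendix~C of \cite{TaWaZhZh}.

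First, decompose $E=\sum_i v_iE_i$ into irreducible $J$-curves with positive integer weights. For each $E_i$, cover a tubular neighborhood by strictly $J$-pseudoconvex charts on which $E_i$ is the zero locus of a $C^3$ function $f_i$ with $\bar\partial_Jf_i|_{E_i}=0$ and $\partial_Jf_i\neq 0$ along $E_i$. On each chart the generalized Poincar\'{e}--Lelong formula \eqref{512} gives
\begin{equation*}
[E_i]=\tfrac{\sqrt{-1}}{2\pi}\partial_J\bar\partial_J\log|f_i|^2-R_J(f_i),
\end{equation*}
with $R_J(f_i)$ a weakly $\mathcal{D}_J^+$-closed $J$-$(1,1)$ current of class $L^\alpha_{\mathrm{loc}}$ for $\alpha<2$. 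Patching these expressions with a partition of unity subordinate to a finite open cover of $\mathrm{supp}(E)$ and absorbing the transition pieces together with the $R_J$ terms into a smooth $J$-$(1,1)$ form $\rho$, I produce a globally defined $L^1$ function $\psi$ with logarithmic singularities along $E$ satisfying, in the sense of currents,
\begin{equation*}
\sigma_E=[E]-\tfrac{\sqrt{-1}}{2\pi}\partial_J\bar\partial_J\psi+\rho.
\end{equation*}
Since $\mathcal{D}_J^+(\psi)-2\sqrt{-1}\partial_J\bar\partial_J\psi$ is smooth by elliptic regularity of the $(\mathcal{W},d_J^-)$-correction in Definition~\ref{0103} (cf.\ Lemma~\ref{65}), absorbing that smooth term into $\rho$ yields a smooth $J$-$(1,1)$ form $\widetilde\rho$ with $[E]=\sigma_E+\tfrac{1}{4\pi}\mathcal{D}_J^+(\psi)-\widetilde\rho$.

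Next, I apply the Demailly-type regularization used in \eqref{0139} and Appendix~C of \cite{TaWaZhZh} to the closed positive current $[E]$, using the $J$-plurisubharmonic potentials produced from $\psi$ on strictly $J$-pseudoconvex neighborhoods and smoothing on annular regions exactly as in the proof of Theorem~\ref{th3}. This delivers, for small $c,\varepsilon>0$, a smooth function $g_{c,\varepsilon}$ with
\begin{equation*}
\sigma_E+\mathcal{D}_J^+(g_{c,\varepsilon})\geq-\bigl(\min\{\lambda_\varepsilon,c\}K+\delta_\varepsilon\bigr)F,
\end{equation*}
where $\lambda_\varepsilon\searrow v([E],\cdot)$ pointwise, $\delta_\varepsilon\searrow0$, and $K$ depends only on $(g,J,F)$. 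Away from $\mathrm{supp}(E)$ the Lelong number vanishes, so on that open set the estimate already yields $\geq-\epsilon F$ for suitable $c,\varepsilon$.

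The main obstacle, and the step where $E\cdot E=0$ is decisive, is controlling the estimate along $\mathrm{supp}(E)$ itself, where the Lelong numbers $v_i>0$ force $\min\{\lambda_\varepsilon,c\}=c$. I invoke the intersection calculus of Section~3: since $E\cdot E=0$ and $F\cdot E=\int_EF>0$, Proposition~\ref{0112} and Corollary~\ref{0113} imply that $\sigma_E$ is extremal, so that no Siu decomposition component of a small $\mathcal{D}_J^+$-perturbation of $[E]$ can appear beyond $\sum v_i[E_i]$ itself. This allows me to drive $c\to 0$, keeping $cK<\epsilon/2$, while absorbing the remaining $v_i$-weighted singular contributions along each $E_i$ into a second $\mathcal{D}_J^+$-exact correction constructed from strictly $J$-plurisubharmonic local potentials and the $L^2$-estimate for the $(\widetilde{\mathcal{W}},d_J^-)$-problem of Appendix~A of \cite{TaWaZhZh}. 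Combining the two corrections produces a single smooth function $g$ with $\sigma_E+\mathcal{D}_J^+(g)\geq-\epsilon F$, as required.
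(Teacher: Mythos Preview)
Your approach diverges substantially from the paper's, and there is a genuine gap. The paper does \emph{not} construct $g$ directly via Poincar\'e--Lelong and Demailly regularization. Instead, it runs a two-step Hahn--Banach argument (following Buchdahl \cite[Lemma~1]{Bu1}): first, if no smooth $g$ makes $\sigma_E+\mathcal{D}_J^+(g)+\epsilon F$ positive in a neighbourhood of $E$, a separating functional produces a positive, weakly $\mathcal{D}_J^+$-closed $J$-$(1,1)$ current $T$ supported on $E$, hence $T=\sum h_i[E_i]$. The hypothesis $E\cdot E=0$ enters cleanly here: together with $h_J^-=b^+$ (so the intersection form on $H_J^+(M)$ is negative definite by Proposition~\ref{01}) it forces $[\sigma_E]=0$, whence $E_i\cdot E=0$, and then $T(\sigma_E+\epsilon F)=\epsilon\sum h_i\int_{E_i}F>0$ contradicts $T(\sigma_E+\epsilon F)\leq C\leq 0$. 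Second, one extends from a neighbourhood of $E$ to all of $M$ by the elementary convex cut-off $h(\log|f_E|^2)$.

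Your route has two concrete problems. First, the operator $\mathcal{D}_J^+$ is defined only on $L^2_2(M)_0$ (Definition~\ref{0103}), and your global potential $\psi$ with logarithmic poles along $E$ is not in $L^2_2$; the expression $\mathcal{D}_J^+(\psi)$ and the claim that $\mathcal{D}_J^+(\psi)-2\sqrt{-1}\partial_J\bar\partial_J\psi$ is smooth by elliptic regularity are therefore not justified. Second, your application of the Demailly regularization to $[E]$ loses the link to $\sigma_E$: for any $c<1$ the divisorial part $D_c$ is $[E]$ itself, so the regularized current lies in the zero class and the inequality you obtain is merely $\mathcal{D}_J^+(g_{c,\varepsilon})\geq -(\cdots)F$, which is trivial and says nothing about $\sigma_E+\mathcal{D}_J^+(g)$. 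The paragraph invoking ``extremality'' of $\sigma_E$ via Proposition~\ref{0112} does not repair this; what is actually needed, and what the paper uses, is the \emph{vanishing} $[\sigma_E]=0$ in $H_J^+(M)$, fed into a Hahn--Banach contradiction rather than into a regularization estimate.
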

\bp
If  there is no smooth function such that
$$\sigma_E+{\mathcal{D}}^+_J(g)\geq -\epsilon F$$
is positive in a neighbourhood of $E$, the Hahn-Banach Theorem \cite{Yo} (for Hilbert space $(\wedge_J^{1,1}(M)\otimes L^2(M))_w^0$) implies that
the existence of current $T$  and a constant $C$ such that
$$T(\sigma_E+\mathcal{D}^+_J(g)+\epsilon F)\leq C,$$
for every smooth function $g$ and $T(\psi)> C$ for every smooth $2$-form  $\psi$ whose $J$-$(1,1)$-component is positive in a neighhorhood of $E$.

It follows immediately that $T$ is $J$-$(1,1)$ current and $C$ must be non-positive such that
$$T(\sigma_E+{\mathcal D}_J^+(g)+\epsilon F)=T(\sigma_E+\epsilon F)\leq C.$$
 $T$ is weakly
$\mathcal{D}^+_J$-closed in the sense of currents and $$T(\psi)>0$$ in a neighborhood of $E$ for any smooth $J$-$(1,1)$ form $\psi$. The support of $T$ must
be contained in $E$. By Proposition \ref{0110}, as complex surface (see Lemma 32 of \cite{HaLa}), it follows that $T=\Sigma h_iE_i$, where $h_i$ is a non-negative constant and $E_1,E_2,\cdots$ the irreducible component of $E$. Since $E\cdot E=0$ and $h_J^-=b^+$, $[E]=[\sigma_E]=0$ in $H_J^+(M)$. Hence
$$E_i\cdot E=\int_{E_i}\sigma_E=0,$$
 for all $i$, and this gives a contradiction since
$$C\geq T(\sigma_E+\epsilon F)=\sum h_i\int_{E_i}\sigma_E+\sum h_i\int_{E_i}\epsilon F>0.$$

Since $E$ is an effective integral curve, $\sigma_E>-\epsilon F$, $\sigma_E=d\mathcal{W}(f_E)$, locally, where $f_E\in L_2^q(M)$, $q\in(1,2)$ and
strictly $J$-plurisubharmonic. By \eqref{513}, locally,
$$\mathcal{D}^+_J(f_E)=d\mathcal{W}(f_E)=2\sqrt{-1}\partial_{J_0}\overline{\partial}_{J_0}f_E.$$
Note that for any $p$ in an open neighbourhood $U$ of $E$, by rescaling, it can be assumed that the local symplectic form
 $$\omega_p\geq \frac{1}{2}F,\sigma_E>-\epsilon F$$ in an open neighbourhood $U$ of $E$. We can assume that
$$\{x\in M| |f_E|\leq1\}\subset U.$$
Let $h$ be a smooth convex increasing function on $\mathbb{R}$ such that $0\leq h'(t)\leq1$ for all $t$, with $h(t)=t$ for $t\geq0$ and with $h(t)=-1$
for $t\leq-1$. Then
\begin{align*}
2\sqrt{-1}\partial_{J_0}\overline{\partial}_{J_0}(h\log|f_E|^2)&=h'(\log|f_E|^2)2\sqrt{-1}\partial_{J_0}\overline{\partial}_{J_0}f_E\\
&+h'(\log|f_E|^2)\cdot2\sqrt{-1}\partial_{J_0}(\log|f_E|^2)\wedge\partial_{J_0}(\log|f_E|^2)\\
&\leq \chi'(\log|f_E|^2)2\sqrt{-1}\partial_{J_0}\overline{\partial}_{J_0}f_E.
\end{align*}
So
$$2\sqrt{-1}\partial_{J_0}\overline{\partial}_{J_0}f_E\geq (1-\chi'(\log|f_E|^2))\cdot2\sqrt{-1}\partial_{J_0}\overline{\partial}_{J_0}f_E\geq -\epsilon F,$$
as required.

 This completes the proof of Lemma \ref{55}.
\ep

We need the following Lemma (cf. Lemma 2 in \cite{Bu1}):
\begin{lem}\label{56}
Suppose that
$$\psi\in(\wedge_J^{1,1}\otimes L^2(M))_w^0, \psi\cdot\psi=0, F\cdot\psi\geq0 \ and\  \psi\cdot D\geq0$$
for every effective $J$-curve
$D\subset (M,J)$. Then for any $\epsilon>0$, there is a smooth function $f_{\epsilon}$
such that $$\psi+{\mathcal{D}}^+_J(f_{\epsilon})\geq -\epsilon F.$$
\end{lem}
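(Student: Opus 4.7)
The plan is to adapt the Hahn--Banach separation strategy of Lemma \ref{0115}(2) to obtain a pointwise lower bound $\psi + \mathcal{D}^+_J(f_\epsilon) \geq -\epsilon F$, rather than a mere $L^2$-approximation by a positive form.

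First I would argue by contradiction: suppose no such smooth $f_\epsilon$ exists. Following the setup of Lemma \ref{0115}(2), the closed affine subset
\[
\mathcal{H}_\psi := \{\psi + \mathcal{D}^+_J(g) + \epsilon F \mid g \in C^\infty(M)_0\}
\]
is disjoint from an open $L^2$-neighbourhood $\mathcal{P}_\epsilon$ of the convex cone of smooth nonnegative $J$-$(1,1)$ forms inside the ambient Hilbert space of Lemma \ref{0115}(2). Since $\mathcal{D}^+_J$ has closed range by Lemma \ref{0151}, Hahn--Banach yields a nonzero $T \in \wedge_J^{1,1}\otimes L^2(M)$ and a constant $c < 0$ with $\int_M T\wedge h \leq c$ for $h \in \mathcal{H}_\psi$ and $\int_M T\wedge \rho > c$ for smooth $\rho \geq 0$. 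Substituting $g \mapsto tg$ and sending $t \to \pm\infty$ shows that $T$ is weakly $\mathcal{D}^+_J$-closed, while testing on positive forms forces $T \geq 0$ a.e.; in particular $\int_M T\wedge F > 0$ since $F$ is positive definite.

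Next I would apply Siu's decomposition to the closed positive $J$-$(1,1)$ current $T$, exactly as in the proof of Theorem \ref{th3}, writing
\[
T = \sum_i v_i\, T_{D_i} + R,
\]
with $D_i$ irreducible $J$-curves, $v_i > 0$, and $R$ a residual closed positive $J$-$(1,1)$ current whose upper level sets of Lelong numbers have dimension at most $0$. The hypothesis $\psi\cdot D \geq 0$ for each effective integral $J$-curve immediately gives $\sum_i v_i \int_{D_i}\psi \geq 0$. The heart of the argument is to show $R\cdot\psi \geq 0$. For this I would use Demailly's regularization (in the form applied to Theorem \ref{th3} via \cite{TaWaZhZh}) to produce smooth real closed $J$-$(1,1)$ forms $R_\eta$, $\mathcal{D}^+_J$-cohomologous to $R$, satisfying $R_\eta \geq -\eta F$ for any $\eta > 0$. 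Since $\psi$ is weakly $\mathcal{D}^+_J$-closed, the pairing is invariant, so $R\cdot\psi = R_\eta\cdot\psi$. The smooth form $R_\eta + \eta F$ is positive, whence $(R_\eta + \eta F)^2 \geq 0$ and $(R_\eta + \eta F)\cdot F > 0$; applying Proposition \ref{0112} to the pair $(\psi, R_\eta + \eta F)$ together with $\psi\cdot\psi = 0$ and $\psi\cdot F \geq 0$ gives
\[
R_\eta\cdot\psi + \eta F\cdot\psi = (R_\eta + \eta F)\cdot\psi \geq \sqrt{\psi\cdot\psi}\sqrt{(R_\eta+\eta F)^2} = 0,
\]
and letting $\eta \searrow 0$ yields $R\cdot\psi \geq 0$.

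Combining these contributions gives
\[
c \geq \int_M T\wedge(\psi + \epsilon F) = \sum_i v_i\int_{D_i}\psi + R\cdot\psi + \epsilon\int_M T\wedge F \geq \epsilon\int_M T\wedge F > 0,
\]
contradicting $c < 0$. The principal obstacle I anticipate is the technical step $R\cdot\psi = R_\eta\cdot\psi$: one must set up Demailly's regularization so that the correction $R_\eta - R$ is realized by a potential of sufficient regularity (compatible with $\psi \in (\wedge_J^{1,1}\otimes L^2(M))_w^0$) to legitimately exploit the weak $\mathcal{D}^+_J$-closedness of $\psi$. This is precisely the technical core shared with the proof of Theorem \ref{th3} and rests on the $(\widetilde{\mathcal{W}}, d_J^-)$-problem machinery developed in the appendix of \cite{TaWaZhZh}.
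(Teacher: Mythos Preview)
Your separation argument has a structural gap that breaks the subsequent analysis. When you run Hahn--Banach against the affine space $\{\psi+\epsilon F+\mathcal{D}^+_J(g)\}$, the resulting functional $T$ is only \emph{weakly $\mathcal{D}^+_J$-closed}: it annihilates the image of $\mathcal{D}^+_J$, which by Lemma~\ref{65} coincides with the space of $d$-exact $J$-$(1,1)$ forms. But $d$-closedness of a $(1,1)$ current requires annihilating all of $d_J^+(\Omega^1_{\mathbb R})$, a strictly larger space (it contains every $d_J^+(u+\bar u)$ with $d_J^-(u+\bar u)\neq 0$, and these are not $d$-exact). So $T$ need not be $d$-closed, and neither Siu's decomposition nor Demailly's regularization---both of which are stated for $d$-closed positive currents---can be applied to $T$ or to the residual $R$. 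Your key identity $R\cdot\psi=R_\eta\cdot\psi$ then has no meaning, because $R_\eta$ cannot be produced in the first place. (There is also a topology mismatch: to conclude a \emph{pointwise} lower bound $\geq -\epsilon F$ you must separate in $C^0$, which yields $T$ as a current rather than an $L^2$ form; separating in $L^2$ with an $\epsilon$-neighbourhood only reproduces Lemma~\ref{0115}(2).)

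The paper circumvents this by never applying Siu/Demailly to a separating functional. Instead it starts from the $L^2$-approximation of Lemma~\ref{0115}(2), passes to the limit to obtain a $d$-closed almost positive current $P=\mathcal{D}^+_J(f_\infty)\geq -\psi$ (this $P$ \emph{is} $d$-closed, being $\mathcal{D}^+_J$-exact), and regularizes $P$. This produces effective divisors $D_\epsilon$ with $\sigma_{D_\epsilon}+\mathcal{D}^+_J(f_\epsilon)\geq -\psi-\epsilon F$, and the remaining work is to show $D_\epsilon\to 0$. That step uses two ingredients entirely absent from your outline: the negative-definiteness of the intersection form on $H_J^+(M)$ (forced by $h_J^-=b^+$, the ambient hypothesis of Theorem~\ref{th53}), and---in the case the limit is a nonzero multiple of $\sigma_0$---Lemma~\ref{55} together with the assumed existence of a nonzero effective integral $J$-curve $E$ with $E\cdot E=0$. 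Without invoking that last hypothesis somewhere, the argument cannot close.
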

\bp
By Lemma \ref{0115} (for complex surface case, cf. \cite[Lemma 7]{Bu}), $\psi$ can be approximated arbitrarily closely in $L^2$ norm by forms of
the type $p-{\mathcal{D}}^+_J(f)$, where $p$ is smooth and positive and $f$ is smooth. Following exactly the same argument as used in the proof of
Theorem \ref{th3}, a sequence of smooth functions $f_n$ and smooth positive $J$-$(1,1)$ forms $p_n$ can be found such that
$$\|\psi+{\mathcal{D}}^+_J(f_n)-p_n\|_{L^2(g)}$$
is converging to 0 and $f_n$ is converging in $L^1$ to define an almost positive closed $J$-$(1,1)$ current
$$P={\mathcal{D}}^+_J(f_{\infty})\geq-\psi.$$
Applying the same arguments as in the proofs of Theorem \ref{th3} and the second part of Theorem \ref{th51} shows that for any $\epsilon>0$, there is
a real effective $J$-curve $D_{\epsilon}$ and a smooth function $f_{\epsilon}$ such that
$$\sigma_{D_{\epsilon}}+{\mathcal{D}}^+_J(f_{\epsilon})\geq-\psi-\epsilon F.$$
The construction of $D_{\epsilon}$ is such that $D_{\epsilon'}\geq D_{\epsilon}$ for ${\epsilon'}<{\epsilon}$ and the coefficients of an irreducible
component common to both $D_{\epsilon'}$ and  $D_{\epsilon}$ are the same in both.

Now take a sequence of positive numbers $\epsilon$ converging monotonically to 0. Since
$$\chi_{\epsilon}=\epsilon F+\psi-\sigma_{D_{\epsilon}}+{\mathcal{D}}^+_J(f_{\epsilon})$$
is positive,
$$0\leq \chi_{\epsilon}\cdot \chi_{\epsilon}=\epsilon^2F\cdot F+D_{\epsilon}\cdot D_{\epsilon}+2\epsilon F\cdot\psi-F\cdot D_{\epsilon}-\psi\cdot D_{\epsilon}.$$
The hypothesis on $\psi$ and negativity of the intersection form restricted to $H_J^+(M)$ imply that the cohomology class $[D_{\epsilon}]\in H_J^+(M)$
are uniformly bounded. After passing to a subsequence, if necessary, the corresponding sequence of harmonic representative can be assumed  to converge smoothly.
Moreover, the inequality
$$0\leq F\cdot \chi_{\epsilon}=\epsilon F\cdot F+F\cdot\psi-F\cdot D_{\epsilon}$$
implies that the increasing sequence of non-negative number $\{F\cdot D_{\epsilon}\}$ is bounded
above and hence converges. Therefore, the sequence of forms $\{\sigma_{D_{\epsilon}}\}$ converges smoothly to a
closed $J$-$(1,1)$ form $\sigma_{\mathcal{D}}$ satisfying
$$\sigma_{\mathcal{D}}\cdot\sigma_{\mathcal{D}}=0=\psi\cdot\sigma_{\mathcal{D}}, F\wedge\sigma_{\mathcal{D}}=cF^2$$
for some constant $c\geq0$. Since $[\sigma_{\mathcal{D}}]=0$ in $H_J^+(M).$
It follows $\sigma_{\mathcal{D}}=c\sigma_0.$

If $c=0$, it follows from the fact $\{F\cdot D_{\epsilon}\}$ is non-negative and increasing that $F\cdot D_{\epsilon}=0$ for
all $\epsilon$, in this case $D_{\epsilon}=0$ for all $\epsilon$ and therefore
$$\psi+{\mathcal{D}}^+_J(f)\geq -\epsilon F$$
 as required.

If $c\neq0$, the identity $\psi\cdot \sigma_0=0$ and Proposition \ref{0112} (for complex surfaces, see \cite[Proposition 5]{Bu})
imply that $\psi+{\mathcal{D}}^+_J(f)$ is a non-negative multiple of $\sigma_0$ for some smooth function $f$. By the hypothesis, there
is a non-zero integral effective $J$-curve $E$ on $M$ such that $E\cdot E=0$.
Since $[\sigma_0]=0$ in $H_J^+(M)$, it follows that $\sigma_0\cdot E=0$ and by Proposition \ref{0112} again, that $\sigma_0$ is a positive multiple
of $\sigma_E$, in this case, the desired result follows from Lemma \ref{55}.

This completes the proof of Lemma \ref{56}.
\ep

Now return to the proof of Theorem \ref{th53}. Let  $\psi\in(\wedge_J^{1,1}\otimes L^2(M))_w^0$ satisfying the hypothesis of Theorem \ref{th53}. By the proof
of the first part of Theorem \ref{th51} (for complex surfaces, see \cite[Theorem 14]{Bu}), there is a form $u\in\Omega_J^{0,1}(M)$ such that
$$\widetilde{\varphi}:=\varphi+d_J^+(u+\bar{u})$$ is positive (the hypothesis that $h_J^-=b^+-1$ in that theorem is only in the final sentence of the proof).

By Proposition \ref{0112}, $\widetilde{\varphi}\cdot\varphi$ is strictly positive. Let $t_0$ be the smaller solution of the equation
$$(\varphi-t_0\widetilde{\varphi})\cdot(\varphi-t_0\widetilde{\varphi})=0,$$
and set
$$\psi:=\varphi-t_0\widetilde{\varphi}.$$
Since $$(\varphi-t\widetilde{\varphi})\cdot(\varphi-t\widetilde{\varphi})>0,$$
for $t$ satisfying $0\leq t<t_0$, the sign of $F\cdot (\varphi-t\widetilde{\varphi})$ can not change for such $t$, so $F\cdot \psi\geq0$.
Since
$$(\varphi-\widetilde{\varphi})\cdot(\varphi-(\varphi-\widetilde{\varphi}))=-2|\overline{\partial}_Ju|^2\leq0,$$
it follows that $t_0\leq1$ and therefore for any effective $J$-curve $E\subset (M,J)$,
$$\psi\cdot E=(1-t_0)\varphi\cdot E\geq0.$$

The form $\psi$ therefore satisfies the hypothesis of Lemma \ref{56}. Applying Lemma \ref{56}, given $\epsilon>0$, there
is a smooth function $f_\epsilon$ such that
$$\psi+{\mathcal{D}}^+_J(f_{\epsilon})\geq -\epsilon F,$$
so if $\epsilon$ is chosen so small that $$t_0\widetilde{\varphi}-\epsilon F>0,$$ it follows that $$\varphi+{\mathcal{D}}^+_J(f_{\epsilon})>0$$ as required.

This completes the proof of Theorem \ref{th53}.
\begin{rem}
On closed almost Hermitian $4$-manifolds, ${\mathcal{D}}^+_J$ (resp. $\widetilde{{\mathcal{D}}}^+_J$) operator is defined by use of Lejmi operator
$$P=d_J^-d^*:\Omega_J^-(M)\rightarrow\Omega_J^-(M).$$
On $2n$-dimensional closed almost K\"{a}hler manifolds, Tan, Wang and Zhou define a generalized operator of $P$ in \cite{TaWaZh0}. It is
natural to ask the following question:

Find a generalized operator of ${\mathcal{D}}^+_J$ on a $2n$-dimensional closed  almost K\"{a}hler manifolds for $n\geq3$.

It is quite possible, by using generalized  ${\mathcal{D}}^+_J$ operator, one can study $2n$-dimensional almost complex manifolds.
\end{rem}

\section{Appendix: $(\mathcal{W},d_J^-)$-problem}
 \setcounter{equation}{0}
 This section is devoted to the $(\mathcal{W},d_J^-)$-problem and  $\widetilde{(\mathcal{W}},d_J^-)$-problem \cite[A.3]{TaWaZhZh} as $\bar{\partial}$-problem  in classical complex analysis \cite{ Ho0,Ho}.

Suppose that $(M,g,J,F)$ is a closed almost Hermitian $4$-manifold. In Section 2, we define the operator
   $$\mathcal{W}: L^2_2(M)_0\longrightarrow\Lambda^1_\mathbb{R}\otimes L^2_1(M)$$
   $$\mathcal{W}(f)=Jdf+d^*(\eta^1_f+\overline{\eta}^1_f),$$
where $\eta^1_f\in\Lambda^{0,2}_J\otimes L^2_2(M)$ satisfies $$d^-_JJdf+d^-_Jd^*(\eta^1_f+\overline{\eta}^1_f)=0.$$
Define
$$\mathcal{D}^+_J: L^2_2(M)_0\longrightarrow\Lambda^{1,1}_J\otimes L^2(M),$$
$$\mathcal{D}^+_J(f)=d\mathcal{W}(f).$$
or if $F$ is weakly $\widetilde{\mathcal{D}}_J^+$-closed (that is, $F$ is $J$-$(1,1)$ component of a $J$-taming symplectic form $\omega=F+d^-_J(v+\bar{v})$,
for $v\in\Omega_J^{0,1}(M)$), define the operator
  $$\mathcal{\widetilde{W}}: L^2_2(M)_0\longrightarrow\Lambda^1_\mathbb{R}\otimes L^2_1(M):$$
  $$\mathcal{\widetilde{W}}(f)=\mathcal{W}(f)-*_g(df\wedge d^-_J(v+\bar{v}))+d^*(\eta^2_f+\overline{\eta}^2_f),$$
where $\eta^2_f\in\Lambda^{0,2}_J\otimes L^2_2(M)$ satisfies
$$-d_J^-*_g(df\wedge d_J^-(v+\bar{v}))+d_J^-d^*(\eta^2_f+\bar{\eta}^2_f)=0.$$
Obviously,
   $$d^*\mathcal{\widetilde{W}}(f)=0, \,\,\, d^-_J\mathcal{\widetilde{W}}(f)=0.$$
Define $$\widetilde{\mathcal{D}}^+_J: L^2_2(M)_0\longrightarrow\Lambda^{1,1}_J\otimes L^2(M),$$
$$\widetilde{\mathcal{D}}^+_J(f)=d\mathcal{\widetilde{W}}(f).$$

As $\bar{\partial}$-problem in classical complex analysis,  $(\mathcal{W},d_J^-)$-problem (or $\widetilde{(\mathcal{W}},d_J^-)$-problem)
is whether $\mathcal{W}(f)=A$ (or $\widetilde{\mathcal{W}}(f)=A$) has a solution. Note that
$$d_J^-\mathcal{W}(f)=0\ ({\rm or} \  d_J^-\widetilde{\mathcal{W}}(f)=0).$$
If we can use the theory of Hilbert space, considering
\begin{align}\label{A61}
L_2^2(M)_0\stackrel{\mathcal{W}}\rightarrow \wedge_{\mathbb{R}}^1\otimes L_1^2(M)\stackrel{d_J^-}\rightarrow \wedge_J^-\otimes L^2(M),
\end{align}
or
\begin{align}\label{A62}
L_2^2(M)_0\stackrel{\widetilde{\mathcal{W}}}\rightarrow \wedge_{\mathbb{R}}^1\otimes L_1^2(M)\stackrel{d_J^-}\rightarrow \wedge_J^-\otimes L^2(M),
\end{align}
then the above problem is equivalent to whether the kernel of $d_J^-$ is equal to the image of $\mathcal{W}$ (or $\widetilde{\mathcal{W}}$).
Similar to $\bar{\partial}$-problem, we call this problem the $(\mathcal{W},d_J^-)$-problem (or $\widetilde{(\mathcal{W}},d_J^-)$-problem).
Our approach is along the line used by H\"{o}rmander to present the method of $L^2$ estimates for the $\bar{\partial}$-problem in \cite{Ho0,Ho}.
We summarize the above discussion in terms of the model of Hilbert spaces below:
$$H_1\stackrel{T}\rightarrow H_2\stackrel{S}\rightarrow H_3,$$
where $H_1,H_2,H_3$ are all Hilbert spaces, and $T,S$ are linear closed and densely defined operators. Assume $S\circ T=0$, the problem
is whether $A\in ker S$, a solution to
$$Tf=A$$
exists. First, note that a simple fact $Tf=A$ is equivalent to
\begin{align}\label{A63}
(Tf,B)_{H_2}=(A,B)_{H_2},
\end{align}
for each $B$ belongs to some dense subset. In fact, $(Tf-A,B)_{H_2}=0$ for each $B$ belongs to some dense subset if and only if
$(Tf-A,H_2)_{H_2}=0$ if and only if $Tf=A$.

Let $T^*$ be an adjoint operator of $T$ in the sense of distributions. By the theory of functional analysis \cite{Yo},
$T^*$ is a closed operator and $(T^*)^*=T$ if and only if $T$ is closed.

From \eqref{A63}, $$(Tf,B)_{H_2}=(A,B)_{H_2},$$
for each $B$ belongs to some dense subset. If this dense subset is contained in $D_{T^*}$, then noticing
$$(Tf,B)_{H_2}=(f,T^*B)_{H_1},$$
\begin{align}\label{A64}
Tf=A\Leftrightarrow (Tf,B)_{H_2}=(A,B)_{H_2}\Leftrightarrow (f,T^*B)_{H_1}=(A,B)_{H_2},
\end{align}
for each $B$ belongs to some dense subset in $D_{T^*}$.
Let
$$T^*A\rightarrow (A,B)_{H_2}$$
 be a linear functional defined on a subset of
$$\{T^*A| A\in{\rm\ some\ dense\ subset
\ in\  D_{T^*}}\}.$$
If we can extend the above functional to a bounded linear functional on the entire $H_1$,
then application of Riesz Representation Theorem \cite{Yo} to \eqref{A64} will show that the problem
$$Tf=A$$
is solved. Recall that Riesz Representation Theorem \cite{Yo} states that if
$$\lambda:H\rightarrow \mathbb{C}$$
is bounded linear function on a Hilbert space $H$, then there exists $g\in H$ such that
$$\lambda(x)=(x,g)_H,$$
for each $x\in H$. Hence the main step is whether we can extend
$$T^*B\rightarrow (A,B)_{H_2}$$
to a bounded linear functional on the extire $H_1$ (for details, see \cite{Ho0,Ho}).

As in classical complex analysis, we have the following lemmas:
  \begin{lem}\label{A01}
      (cf. {\rm\cite[Theorem 1.1.1]{Ho0}},\cite[Lemma A.24]{TaWaZhZh} )
     If there exists a constant $C_A$ depending only on $A$ such that
    \begin{equation}\label{A001}
    |(A,B)_{H_2}|\leq C_A\|T^*B\|_{H_1},
   \end{equation}
    then $$T^*B\longrightarrow(A,B)_{H_2}$$ can be extended to a bounded linear functional on $H_1$.
      \end{lem}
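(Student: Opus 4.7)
The plan is to recognize this as a standard Hahn--Banach style extension argument, with the key preliminary step being well-definedness of the functional on the range of $T^*$.

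First I would define a linear functional $\lambda$ on the subspace $R(T^*)\subseteq H_1$ (the range of $T^*$) by the formula
\begin{equation*}
\lambda(T^*B) := (A,B)_{H_2}.
\end{equation*}
The first thing to verify is that $\lambda$ is well-defined, i.e., independent of the choice of representative $B$. Suppose $T^*B_1 = T^*B_2$; then $T^*(B_1-B_2)=0$, and the hypothesis \eqref{A001} gives
\begin{equation*}
|(A,B_1-B_2)_{H_2}| \leq C_A\,\|T^*(B_1-B_2)\|_{H_1} = 0,
\end{equation*}
so $(A,B_1)_{H_2} = (A,B_2)_{H_2}$. Hence $\lambda$ is a well-defined linear functional on $R(T^*)$.

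Next, the inequality \eqref{A001} rewrites as $|\lambda(T^*B)|\leq C_A\|T^*B\|_{H_1}$, which says precisely that $\lambda$ is bounded on $R(T^*)$ with operator norm at most $C_A$. I would then extend $\lambda$ by continuity to the closure $\overline{R(T^*)}\subseteq H_1$, keeping the same norm bound, since $\overline{R(T^*)}$ is a closed subspace of the Hilbert space $H_1$.

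Finally, to obtain a bounded linear functional on all of $H_1$, I would use the orthogonal decomposition
\begin{equation*}
H_1 = \overline{R(T^*)}\oplus \overline{R(T^*)}^{\perp},
\end{equation*}
and extend $\lambda$ by declaring it to be zero on $\overline{R(T^*)}^{\perp}$. The resulting extension is linear, bounded on $H_1$ with norm $\leq C_A$, and agrees with the original formula $\lambda(T^*B)=(A,B)_{H_2}$ on $R(T^*)$, which is what is claimed. (Alternatively, one could appeal to the Hahn--Banach theorem directly for the extension step, but in a Hilbert space the orthogonal-projection construction is completely elementary.) There is no real obstacle here; the only subtle point is the well-definedness of $\lambda$ on $R(T^*)$, and that is handed to us for free by the hypothesis.
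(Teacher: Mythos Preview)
Your proof is correct and is precisely the standard argument the paper has in mind: the paper does not give an explicit proof of this lemma (it cites H\"ormander and \cite{TaWaZhZh}), but the surrounding discussion makes clear that the intended extension is exactly the one you describe---extend by zero on the orthogonal complement of the range of $T^*$, so that the resulting Riesz representative lies in $(\ker T)^\perp$.
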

In the above discussion, we used only the front half of
$$ H_1\stackrel{T}\rightarrow H_2\stackrel{S}\rightarrow H_3. $$
However, since we only need to solve the equation
$$Tf=A\  {\rm or\ } (T^*B,f)=(B,A)$$
for $A\in \ker S$, it is necessary to prove \eqref{A001} for $A\in H_2$, rather we just need to prove
 \eqref{A001} for $A\in \ker S$. In this case, we hope that $B$ in \eqref{A001} belongs to some dense
 subsets in $D_{T^*}$.

The method of proving
$$|(A,B)_{H_2}|\leq C_A\|T^*B\|_{H_1}$$
is through proving a more general inequality
$$\|B\|_{H_2}^2\leq C(\|T^*B\|_{H_1}^2+\|SB\|_{H_3}^2),$$
for $B\in D_{T^*}\cap D_S$.

First we note, in our problem $D_{T^*}$ and $D_S$ contain the space of smooth functions on $M$ which is a closed
$4$-manifold, hence $D_{T^*}\cap D_S$ is dense on both $D_{T^*}$ and $H_2$. Notice that
$$T=\mathcal{W}\ ({\rm or} \ \widetilde{\mathcal{W}}), S=d_J^-,$$
$M$ being a closed almost Hermitian $4$-manifold (or a closed tamed $4$-manifold), it is easy to see that
$$D_{{\mathcal{W}}^*}\cap D_{d_J^-} ({\rm\  or \ } D_{{\widetilde{\mathcal{W}}}^*}\cap D_{d_J^-} )$$
is dense in the space $H_2=\wedge_{\mathbb{R}}^1\otimes L^2(M)$. In fact, $\Omega^1(M)$ is dense in
$$D_{{\mathcal{W}}^*}\cap D_{d_J^-} ({\rm\  or \ } D_{{\widetilde{\mathcal{W}}}^*}\cap D_{d_J^-} )$$
and $H_2$. For $(\mathcal{W},d_J^-)$-problem (or $\widetilde{(\mathcal{W}},d_J^-)$-problem),
$$H_1:=L^2(M)_0\  {\rm and} \  H_3:=\wedge_J^-\otimes L^2(M).$$

   \begin{lem}\label{A02}
  (cf. {\rm\cite[Theorem 1.1.2]{Ho0},\cite[Lemma A.25]{TaWaZhZh}})
  If \begin{equation}\label{A002}
      \|B\|^2_{H_2}\leq C(\|T^*B\|^2_{H_1}+\|SB\|^2_{H_3}),\  B\in D_{T^*}\cap D_S,
   \end{equation}
 then
  \begin{equation}\label{A003}
    |(A,B)_{H_2}|\leq C^{\frac{1}{2}}\|A\|_{H_2}\|T^*B\|_{H_1}\,\,\,\forall A\in\ker S,\,\,\, B\in D_{T^*}\cap D_S.
  \end{equation}
   \end{lem}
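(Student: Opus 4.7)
The plan is to adapt the classical Hörmander argument (Theorem 1.1.2 in \cite{Ho0}) to the abstract Hilbert space setting stated in the lemma. The key observation is that the hypothesis $S\circ T=0$ forces the inclusions $\mathrm{Im}(T)\subseteq\ker S$ and, taking orthogonal complements, $(\ker S)^{\perp}\subseteq(\mathrm{Im}(T))^{\perp}=\ker T^{*}$, where the last identity uses that $T$ is closed and densely defined so $(T^{*})^{*}=T$.

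First I would fix $A\in\ker S$ and $B\in D_{T^{*}}\cap D_{S}$. Since $S$ is closed, $\ker S$ is a closed subspace of $H_{2}$, so $B$ admits a unique orthogonal decomposition
\begin{equation*}
B=B_{1}+B_{2},\qquad B_{1}\in\ker S,\quad B_{2}\in(\ker S)^{\perp}.
\end{equation*}
The first reduction is then the trivial identity
\begin{equation*}
(A,B)_{H_{2}}=(A,B_{1})_{H_{2}}+(A,B_{2})_{H_{2}}=(A,B_{1})_{H_{2}},
\end{equation*}
because $A\in\ker S$ and $B_{2}\perp\ker S$. Applying Cauchy--Schwarz gives $|(A,B)_{H_{2}}|\le\|A\|_{H_{2}}\|B_{1}\|_{H_{2}}$, so everything reduces to estimating $\|B_{1}\|_{H_{2}}$ in terms of $\|T^{*}B\|_{H_{1}}$.

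Next I would verify that $B_{1}$ is an admissible test element for the hypothesis \eqref{A002}. Membership $B_{1}\in D_{S}$ is automatic because $B_{1}\in\ker S$. For $B_{1}\in D_{T^{*}}$, I use the observation above: $B_{2}\in(\ker S)^{\perp}\subseteq\ker T^{*}\subseteq D_{T^{*}}$, and since $B\in D_{T^{*}}$ by assumption, $B_{1}=B-B_{2}\in D_{T^{*}}$ as well. Consequently $T^{*}B_{1}=T^{*}B-T^{*}B_{2}=T^{*}B$. Now the hypothesis \eqref{A002} applied to $B_{1}$, together with $SB_{1}=0$, yields
\begin{equation*}
\|B_{1}\|_{H_{2}}^{2}\le C\bigl(\|T^{*}B_{1}\|_{H_{1}}^{2}+\|SB_{1}\|_{H_{3}}^{2}\bigr)=C\,\|T^{*}B\|_{H_{1}}^{2}.
\end{equation*}
Combining with the Cauchy--Schwarz bound gives the claimed inequality $|(A,B)_{H_{2}}|\le C^{1/2}\|A\|_{H_{2}}\|T^{*}B\|_{H_{1}}$.

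The only real point that needs care is step two, namely confirming that the orthogonal projection $B_{1}$ of $B$ onto $\ker S$ stays inside $D_{T^{*}}$. This is the one place where the algebraic compatibility $S\circ T=0$ is indispensable: without the resulting inclusion $(\ker S)^{\perp}\subseteq\ker T^{*}$, the complementary piece $B_{2}$ would not automatically lie in $D_{T^{*}}$, and we could not conclude $T^{*}B_{1}=T^{*}B$. Everything else in the argument is routine Hilbert space decomposition plus Cauchy--Schwarz, so I expect no further obstacles.
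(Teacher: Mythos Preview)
Your proof is correct and is precisely the classical H\"ormander argument referenced in the statement (cf.\ \cite[Theorem 1.1.2]{Ho0}); the paper itself does not supply an independent proof but simply cites H\"ormander and \cite[Lemma A.25]{TaWaZhZh}, so your approach coincides with the intended one. One minor remark: the identity $(\mathrm{Im}\,T)^{\perp}=\ker T^{*}$ already follows from $T$ being densely defined, so invoking $(T^{*})^{*}=T$ is not strictly needed at that point.
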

 Applying Lemma \ref{A02}, we have that if
 $$\|B\|_{H_2}^2\leq C(\|T^*B\|_{H_1}^2+\|SB\|_{H_3}^2),$$
 for $B\in D_{T^*}\cap D_S$,
 then
   \begin{equation*}
    |(A,B)_{H_2}|\leq C^{\frac{1}{2}}\|A\|_{H_2}\|T^*B\|_{H_1}\,\,\,\forall A\in\ker S,\,\,\, B\in D_{T^*}\cap D_S.
  \end{equation*}
 Hence, by Lemma \ref{A01}, $$T^*B\rightarrow (A,B)_{H_2}$$ can be extended to a bounded linear functional
 on $H_1$, whose bound is $C^{\frac{1}{2}}\|A\|_{H_2}$. By Riesz Representation Theorem \cite{Yo}, there exists $f\in H_1$ such that
 $$(T^*B,f)_{H_1}=(B,A)_{H_2},$$
for each $B\in D_{T^*}\cap D_S$. Since $D_{T^*}\cap D_S$ is dense in $H_2$, we have
 $$(B,Tf)_{H_2}=(B,A)_{H_2},$$
 for each $B\in H_2$. By \eqref{A64}, the equation $Tf=A$ has a solution. In addition, from Riesz Representation Theorem \cite{Yo}, we have
 $$\|f\|_{H_1}\leq C^{\frac{1}{2}}\|A\|_{H_2},$$
for $f\in(\ker T)^{\perp}$.
$$\|f\|_{H_1}\leq C^{\frac{1}{2}}\|A\|_{H_2}$$
is the direct consequence of Riesz Representation Theorem \cite{Yo}. To show $f\in(\ker T)^{\perp}$, note that, according to the way that
$$T^*B\rightarrow (B,A)_{H_2}$$
 is extended to a bounded linear functional on the entire $H_1$, this functional vanishing on the orthogonal component of
 $\{T^*B|B\in D_{T^*}\}$, thus, $f\in \{T^*B|B\in D_{T^*}\}$. If $$f\in \lim_{k\rightarrow\infty} T^*B_k,$$
 then for every $x\in \ker T$, we have
 $$(x,f)_{H_1}=\lim_{k\rightarrow\infty}(x,T^*B_k)_{H_1}=\lim_{k\rightarrow\infty}(Tx,B_k)_{H_2}=0,$$
 hence, $f\in(\ker T)^{\perp}$.

 In general, the solution of $Tf=g$ is not unique. In fact, if $f_1\in\ker T$, then
   \begin{align*}
 (T^*B,f+f_1)_{H_1}&=(T^*B,f)_{H_1}+(T^*B,f_1)_{H_1}\\
                   &=(T^*B,f)_{H_1}+(B,Tf_1)_{H_2}\\
                   &=(T^*B,f)_{H_1},
   \end{align*}
 and $f$, $f+f_1$ are both the solution of $Tf=A$. However, $f\in(\ker T)^{\perp}$ is the condition to assure that
 the above solution to $Tf=A$ is unique.

  From the above discussion, we have:
   \begin{lem}\label{A03}
 (cf. \cite[Theorem 1.1.4]{Ho0} \cite[Lemma A.26]{TaWaZhZh})
  If $$\|B\|^2_{H_2}\leq C(\|T^*B\|^2_{H_1}+\|SB\|^2_{H_3}),$$ then $Tf=A$ has a solution for $A\in \ker S$.
  This solution $f$ satisfies the estimate
  \begin{equation}\label{inequvi5}
    \|f\|_{H_1}\leq C^{\frac{1}{2}}\|A\|_{H_2},\,\,\,f\in(\ker T)^{\bot}.
  \end{equation}
   \end{lem}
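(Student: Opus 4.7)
\textbf{Proof proposal for Lemma \ref{A03}.} The plan is to combine Lemma \ref{A01} and Lemma \ref{A02} in the abstract Hilbert space setting, then apply the Riesz Representation Theorem. First, I would invoke Lemma \ref{A02}: the hypothesis
$$\|B\|^2_{H_2}\leq C(\|T^*B\|^2_{H_1}+\|SB\|^2_{H_3}),\qquad B\in D_{T^*}\cap D_S,$$
together with Cauchy--Schwarz on the left-hand side $|(A,B)_{H_2}|\leq \|A\|_{H_2}\|B\|_{H_2}$ (and using $SB=0$ when $A$ is paired only after noting $A\in\ker S$, so that $(A,B)_{H_2}=(A,B-SB/\ldots)$—more cleanly: split $B$ orthogonally with respect to $\ker S$ and use $A\perp(\ker S)^{\perp}$), yields for every $A\in\ker S$ and every $B\in D_{T^*}\cap D_S$ the estimate
$$|(A,B)_{H_2}|\leq C^{1/2}\|A\|_{H_2}\,\|T^*B\|_{H_1}.$$
This is precisely the hypothesis of Lemma \ref{A01} with $C_A:=C^{1/2}\|A\|_{H_2}$.

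Next I would apply Lemma \ref{A01}. The estimate guarantees that the map $T^*B\longmapsto (A,B)_{H_2}$ is a well-defined bounded linear functional on the linear subspace $\{T^*B:B\in D_{T^*}\cap D_S\}\subset H_1$ (well-definedness follows from the estimate itself: if $T^*B_1=T^*B_2$ then $|(A,B_1-B_2)_{H_2}|\leq C^{1/2}\|A\|_{H_2}\|T^*(B_1-B_2)\|_{H_1}=0$). By Hahn--Banach it extends to a bounded linear functional on all of $H_1$, with operator norm at most $C^{1/2}\|A\|_{H_2}$. The Riesz Representation Theorem then produces a unique $f\in H_1$ representing this extended functional, i.e.
$$(T^*B,f)_{H_1}=(B,A)_{H_2}\qquad\text{for all }B\in D_{T^*}\cap D_S,$$
with $\|f\|_{H_1}\leq C^{1/2}\|A\|_{H_2}$.

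To conclude that $Tf=A$, I would observe that the identity $(T^*B,f)_{H_1}=(B,A)_{H_2}$ says exactly that $f\in D_{(T^*)^*}=D_T$ with $Tf=A$ in the sense of distributions, as in the equivalence recorded in the remarks preceding the lemma (equation reference to the paper's discussion around $(A64)$); since $T$ is closed, $(T^*)^*=T$, so $Tf=A$ holds in $H_2$. For the orthogonality $f\in(\ker T)^{\perp}$, the Riesz representative constructed from an extension vanishing on the orthogonal complement of $\overline{\operatorname{Range}(T^*)}$ lies in $\overline{\operatorname{Range}(T^*)}$; and $\overline{\operatorname{Range}(T^*)}=(\ker T)^{\perp}$ by the standard duality for closed densely defined operators. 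Hence for any $h\in\ker T$, writing $f=\lim_k T^*B_k$ gives $(h,f)_{H_1}=\lim_k(h,T^*B_k)_{H_1}=\lim_k(Th,B_k)_{H_2}=0$, so $f\perp\ker T$. Uniqueness of the solution in $(\ker T)^{\perp}$ is then automatic.

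The main (mild) obstacle is purely bookkeeping: one must verify that $D_{T^*}\cap D_S$ is dense in $H_2$ so that the identity extends to all of $H_2$, and one must check well-definedness of the functional on $\operatorname{Range}(T^*)$. In the concrete setting of the paper, $T=\mathcal{W}$ (or $\widetilde{\mathcal{W}}$) and $S=d^-_J$ on a closed almost Hermitian $4$-manifold, the smooth $1$-forms $\Omega^1(M)$ are a common dense subspace of $D_{T^*}\cap D_S$ and of $H_2=\Lambda^1_{\mathbb{R}}\otimes L^2(M)$, as noted in the preceding discussion, so this density is not an issue. No further analytic input is needed beyond Lemmas \ref{A01}, \ref{A02} and the Riesz Representation Theorem.
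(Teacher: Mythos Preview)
Your proposal is correct and follows essentially the same approach as the paper: the paper's argument (given in the discussion immediately preceding the lemma rather than as a separate proof) likewise invokes Lemma \ref{A02} to obtain the bound $|(A,B)_{H_2}|\leq C^{1/2}\|A\|_{H_2}\|T^*B\|_{H_1}$, then Lemma \ref{A01} and the Riesz Representation Theorem to produce $f$ with the norm estimate, and finally verifies $f\in(\ker T)^\perp$ by writing $f=\lim_k T^*B_k$ and pairing against $\ker T$. Your additional remarks on well-definedness of the functional and the identity $(T^*)^*=T$ are helpful clarifications but do not depart from the paper's route.
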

In our application of the above lemmas, the spaces $H_1$, $H_2$ and $H_3$ will be $L^2(M)_0$, $\wedge_{\mathbb{R}}^1\otimes L^2(M)$
and $\wedge_J^-\otimes L^2(M)$, respectively, $T$ the operator $\mathcal{W}$ (or $\widetilde{\mathcal{W}}$), and let $G$ be the set of
all $A\in \wedge_{\mathbb{R}}^1\otimes L_1^2(M)$ with $d_J^-A=0$. Let $S$ be the operator from $\wedge_{\mathbb{R}}^1\otimes L_1^2(M)$
to $\wedge_J^-\otimes L^2(M)$ defined by $d_J^-$. Then $G$ is the null space of $S$, and to prove \eqref{A001} it will be sufficient to show
that
 \begin{equation}\label{A09}
      \|A\|^2_{H_2}\leq c(\|T^*A\|^2_{H_1}+\|SA\|^2_{H_3}), \,\,\, A\in D_{T^*}\cap D_S.
   \end{equation}   

In order to study $(\mathcal{W},d_J^-)$ (or $\widetilde{(\mathcal{W}},d_J^-)$)-problem, we need the following lemma:
  \begin{lem}\label{A04}
   Let $(M,g,J,F)$ be a closed almost Hermitian $4$-manifold.\\
   (1) For any $A\in\Omega_{\mathbb{R}}^1(M)$ satisfying $d_J^-A=0$, then
   $$\mathcal{W}^*(A)=\frac{2(d_J^+A\wedge F+A\wedge dF)}{F^2}.$$
   (2) If $F$ is weakly $\widetilde{\mathcal{D}}_J^+$-closed and any $A\in \Omega_{\mathbb{R}}^1(M)$ satisfies $d_J^-A=0$,
   then
    $$\widetilde{\mathcal{W}}^*(A)=\frac{2d_J^+A\wedge F}{F^2}.$$
  \end{lem}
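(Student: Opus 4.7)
My plan is to prove both parts of the lemma by directly computing the $L^2$ formal adjoint via integration by parts, leveraging two pointwise algebraic identities that hold on any almost Hermitian 4-manifold: (a) $*_g\alpha = J\alpha \wedge F$ for every real 1-form $\alpha$, and (b) $\mathcal{J}$ acts trivially on top-degree forms because $\det J|_{TM}=1$. The overall strategy for part (1) is to expand
$$\langle \mathcal{W}(f), A\rangle_{L^2} = \langle Jdf, A\rangle + \langle d^*(\eta^1_f+\overline{\eta}^1_f), A\rangle,$$
dispose of the second piece using the bi-type decomposition $\Lambda^2_\mathbb{R}=\Lambda^+_J\oplus\Lambda^-_J$, and convert the first piece to the stated formula by Stokes' theorem.

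For the $d^*$-term I would pass $d^*$ across via adjointness to obtain $\langle \eta^1_f+\overline{\eta}^1_f, dA\rangle$. Because $\eta^1_f\in\Lambda^{0,2}_J$ places $\eta^1_f+\overline{\eta}^1_f$ in $\Lambda^-_J$ and $\Lambda^+_J\perp\Lambda^-_J$, only the $d_J^-A$ component survives; that vanishes by hypothesis. For the main piece I would apply (a) to both factors to rewrite
$$\langle Jdf, A\rangle = \int_M Jdf\wedge *_gA = \int_M Jdf\wedge JA\wedge F = \int_M \mathcal{J}(df\wedge A)\wedge F = \int_M df\wedge A\wedge F,$$
using (b) in the last equality. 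Stokes on the 3-form $A\wedge F$ yields $-\int_M f\,(dA\wedge F - A\wedge dF)$, and one further type check ($d_J^-A\wedge F\in \Lambda^{3,1}_J\oplus\Lambda^{1,3}_J=0$ on a 4-manifold) lets me replace $dA$ by $d_J^+A$. Dividing by $dvol_g=F^2/2$ then reads off $\mathcal{W}^*(A)$ in part (1).

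For part (2), the definition of $\widetilde{\mathcal{W}}(f)$ contributes two extra terms to the adjoint: $-*_g(df\wedge d_J^-(v+\overline{v}))$ and $d^*(\eta^2_f+\overline{\eta}^2_f)$. The second dies by the same orthogonality argument as above. For the first I would again use (a) to remove the Hodge star, reducing the pairing (up to sign) to $\int_M df\wedge A\wedge d_J^-(v+\overline{v})$. Integrating by parts once more, the type vanishing $d_J^+ A\wedge d_J^-(v+\overline{v})=0$ (since the wedge would live in $\Lambda^{3,1}_J\oplus\Lambda^{1,3}_J$) leaves only a multiple of $\int_M f\,A\wedge d(d_J^-(v+\overline{v}))$. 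The assumption that $F$ is weakly $\widetilde{\mathcal{D}}^+_J$-closed, i.e., that $\omega=F+d_J^-(v+\overline{v})$ is symplectic, forces $d(d_J^-(v+\overline{v}))=-dF$, and this is engineered precisely so that the new contribution cancels the $A\wedge dF$ term produced in part (1), leaving only the $d_J^+A\wedge F$ piece. The hard part of the proof will be careful sign bookkeeping---tracking the signs from $*_g$ applied to $1$-forms (where $*_g^2=-1$), from each application of Stokes' theorem, and from reordering wedge products of $1$- and $3$-forms---so that the cancellation in the $\widetilde{\mathcal{W}}$ case is transparent; the algebraic content itself is routine.
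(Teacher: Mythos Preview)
Your proposal is correct and follows essentially the same line as the paper: both kill the $d^*(\eta_f+\bar\eta_f)$ terms via the $\Lambda^+_J\perp\Lambda^-_J$ orthogonality together with $d_J^-A=0$, invoke the pointwise identity linking $*_g$, $J$ and $\wedge F$ on $1$-forms, and finish with Stokes' theorem. The only difference is organizational: for part~(2) the paper packages $Jdf-*_g(df\wedge d_J^-(v+\bar v))$ directly as $-*_g\,d(f\omega_1)$ (using $d\omega_1=0$) and integrates by parts once, whereas you treat part~(2) as part~(1) plus a correction term and then verify that the correction cancels the $A\wedge dF$ contribution; both routes are equivalent, the paper's being slightly more compact.
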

 \bp (1)
 By Definition \ref{0103}, we have
  $$\mathcal{{W}}(f)=Jdf+d^*(\eta^1_f+\bar{\eta}^1_f)$$
  satisfying
  $$d^-_J\mathcal{{W}}(f)=0.$$
Thus,
  $$d\mathcal{{W}}(f)=d^+_J\mathcal{{W}}(f)\in\Lambda^{1,1}_J\otimes L^2(M).$$
  Note that $Jdf=-*_g(d f\wedge F)$.
  We  assume that if
  $A\in \Omega^1_{\mathbb{R}}(M)$ and $d^-_JA=0,$
  then
   \begin{eqnarray*}
    (\mathcal{{W}}(f),A) &=&(-*_g(d f\wedge F)+d^*(\eta^1_f+\bar{\eta}^1_f),A)\\
    &=&(-*_g(d (fF)-fdF)+d^*(\eta^1_f+\bar{\eta}^1_f),A)\\
     &=&(-*_g(d (fF)-fdF)),A)\\
      &=&(d^*(fF),A)+(*_g(fdF),A)\\
      &=&((fF),dA)+(*_g(fdF),A)\\
     &=& \int_M fF\wedge dA-fdF\wedge A\\
     &=& (f, \mathcal{W}^*A).
  \end{eqnarray*}
  Thus, the formal $L^2$-adjoint operator of $\mathcal{{W}}$ is
  \begin{equation}
   \mathcal{W}^*(A)=\frac{2(d_J^+A\wedge F+A\wedge dF)}{F^2},
  \end{equation}
for $d_J^-A=0$.

(2) If $F$ is weakly $\widetilde{\mathcal{D}}_J^+$ closed, then $F$ is $J$-$(1,1)$ component of a symplectic form
$$\omega_1=F+d_J^-(v+\bar{v}),$$
where $v\in \Omega_J^{0,1}(M)$. If $f\in C^\infty(M)_0$, where $C^\infty(M)_0$ is the set of smooth function with
$$\int_M f dvol_g=0;$$
$A\in\Omega_{\mathbb{R}}^1(M)$ satisfying $d_J^-A=0$. Then, by Definition \ref{0104}, we have
  \begin{align*}
 (\widetilde{\mathcal{W}}(f),A)_{L^2(M)}&=\int_MA\wedge d(f\omega_1+\eta_f^1+\eta_f^2+{\bar{\eta}}_f^1+{\bar{\eta}}_f^2)\\
                   &=\int_MdA\wedge (f\omega_1+\eta_f^1+\eta_f^2+{\bar{\eta}}_f^1+{\bar{\eta}}_f^2)\\
                   &=\int_Md_J^+A\wedge F\\
                   &=(f,\widetilde{\mathcal{W}}^*A)_{L^2(M)}.
   \end{align*}
Thus, if $d_J^-A=0$, the formally adjoint operator $\widetilde{\mathcal{W}}$ is
 \begin{align}
 \widetilde{\mathcal{W}}^*(A)=\frac{2d_J^+A\wedge F}{F^2}.
    \end{align}
\ep

 By Lemmas \ref{A01}-\ref{A04}, we have the following lemma for solving $(\mathcal{W},d_J^-)$ (or $\widetilde{(\mathcal{W}},d_J^-)$)-problem:
   \begin{lem}\label{65}
(1)   Let $(M,F,J,g_J)$ be a closed almost Hermitian $4$-manifold, $F$ weakly $\mathcal{D}_J^+$-closed.
   Suppose that $\psi\in\wedge_J^{1,1}\otimes L^2(M)$ is $d$-exact.
Then  $\psi$ is ${\mathcal{D}}_J^+$-exact, that is, there exists $f_\psi\in L_2^2(M)_0$ such that $\psi={\mathcal{D}}_J^+(f_\psi)$.

(2)  Let $(M,F,J,g_J)$ be a closed almost Hermitian $4$-manifold which is tamed by $\omega_1=F+d_J^-(v+\bar{v})$, where $v\in\Omega_J^{0,1}(M)$.
   Suppose that $\psi\in\wedge_J^{1,1}\otimes L^2(M)$ is $d$-exact.
Then  $\psi$ is $\widetilde{\mathcal{D}}_J^+$-exact, that is, there exists $f_\psi\in L_2^2(M)_0$ such that $\psi=\widetilde{\mathcal{D}}_J^+(f_\psi)$.
  \end{lem}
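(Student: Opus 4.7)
The plan is to apply the H\"ormander-style $L^2$-existence machinery from Lemmas~\ref{A01}--\ref{A03} above to the $(\mathcal{W},d_J^-)$-problem. Let $\psi \in \wedge_J^{1,1}\otimes L^2(M)$ be $d$-exact, so that $\psi = d\theta$ for some real $1$-form $\theta \in \wedge^1_\mathbb{R}\otimes L_1^2(M)$. Decomposing $d\theta = d_J^+\theta + d_J^-\theta$ and using that $\psi$ has no $J$-anti-invariant part, we conclude $d_J^-\theta = 0$, i.e.\ $\theta \in \ker d_J^-$. If $f \in L_2^2(M)_0$ satisfies $\mathcal{W}(f) = \theta$, then
$$\mathcal{D}_J^+(f) = d\mathcal{W}(f) = d\theta = \psi,$$
which is exactly the conclusion of Part (1).

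To produce such an $f$, one invokes Lemma~\ref{A03} for the Hilbert-space complex
$$L_2^2(M)_0 \xrightarrow{\mathcal{W}} \wedge^1_\mathbb{R}\otimes L_1^2(M) \xrightarrow{d_J^-} \wedge_J^-\otimes L^2(M),$$
in which $d_J^-\circ\mathcal{W} = 0$ holds by construction. The lemma guarantees solvability of $\mathcal{W}(f) = \theta$ for every $\theta \in \ker d_J^-$, provided the a priori estimate
$$\|B\|_{L^2}^2 \leq C\bigl(\|\mathcal{W}^*B\|_{L^2}^2 + \|d_J^- B\|_{L^2}^2\bigr), \qquad B \in D_{\mathcal{W}^*}\cap D_{d_J^-},$$
is established on the orthogonal complement of $\ker\mathcal{W}$. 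With the explicit adjoint formula $\mathcal{W}^*A = 2(d_J^+A\wedge F + A\wedge dF)/F^2$ from Lemma~\ref{A04} in hand, I would derive this estimate by integrating by parts against $F$ and combining the closed-range property of $\mathcal{D}_J^+ = d\circ\mathcal{W}$ from Lemma~\ref{0151} with standard G\aa rding-type elliptic inequalities on $1$-forms.

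The principal obstacle is precisely this a priori estimate. The difficulty is that $\mathcal{W}^*$ has a large kernel on $\ker d_J^-$---for instance, every exact $1$-form $dg$ lies in $\ker d_J^-$ and satisfies $\mathcal{W}^*(dg) = 2\,dg\wedge dF/F^2$, which vanishes identically whenever $dF=0$---so the estimate cannot hold globally. One must first quotient by $\ker\mathcal{W}$ (equivalently, restrict $B$ to its orthogonal complement) and verify that $\theta$ still lies in $\ker d_J^-$ modulo this kernel. Once this is arranged, Lemmas~\ref{A01}--\ref{A03} produce the desired $f$. Part (2) is proved by the same argument with $\mathcal{W}$ replaced by $\widetilde{\mathcal{W}}$ and the simpler adjoint $\widetilde{\mathcal{W}}^*A = 2\,d_J^+A\wedge F/F^2$, where the disappearance of the $A\wedge dF$ term reflects the $d$-closedness of $\omega_1 = F + d_J^-(v+\bar v)$.
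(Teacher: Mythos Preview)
Your framework matches the paper's exactly: reduce to solving $\mathcal{W}(f)=\theta$ (resp.\ $\widetilde{\mathcal{W}}(f)=\theta$) for $\theta\in\ker d_J^-$ via the abstract H\"ormander machinery of Lemmas~\ref{A01}--\ref{A03}, using the adjoint formulas of Lemma~\ref{A04}. You also correctly isolate the obstacle for Part~(1): the term $A\wedge dF$ in $\mathcal{W}^*A$ obstructs any global estimate of the form $\|B\|^2\leq C(\|\mathcal{W}^*B\|^2+\|d_J^-B\|^2)$, since exact $1$-forms $dg$ lie in $\ker d_J^-$ yet contribute only $2\,dg\wedge dF/F^2$ to $\mathcal{W}^*$.

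The gap is in your proposed cure. Quotienting by $\ker\mathcal{W}$ is not the right move: $\ker\mathcal{W}$ lives in $H_1=L_2^2(M)_0$, not in $H_2$ where both the test forms $B$ and the target $\theta$ sit, so ``restricting $B$ to its orthogonal complement'' does not make sense as stated. What the paper does instead is exploit the freedom already built into the problem---namely that you only need $d\mathcal{W}(f)=d\theta$, not $\mathcal{W}(f)=\theta$ itself---by replacing $\theta$ with its Coulomb-gauge representative $\widetilde{\theta}$ satisfying $d^*\widetilde{\theta}=0$ and $d\widetilde{\theta}=d\theta$ (taken orthogonal to the harmonic $1$-forms). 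Since $d_J^-\widetilde{\theta}=P_J^-(d\widetilde{\theta})=P_J^-(d\theta)=0$, one still has $\widetilde{\theta}\in\ker d_J^-$, and now the elliptic bound $\|\widetilde{\theta}\|_{L^2}\leq C\|d\widetilde{\theta}\|_{L^2}$ controls the bad term $\widetilde{\theta}\wedge dF$ in $\mathcal{W}^*(\widetilde{\theta})$, giving $\|\mathcal{W}^*(\widetilde{\theta})\|_{L^2}\leq C\|d_J^+\theta\|_{L^2}$. This is what feeds into Lemmas~\ref{A01}--\ref{A03}; solving $\mathcal{W}(f_\psi)=\widetilde{\theta}$ then yields $\mathcal{D}_J^+(f_\psi)=d\widetilde{\theta}=d\theta=\psi$. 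For Part~(2) your observation that the $A\wedge dF$ term is absent from $\widetilde{\mathcal{W}}^*$ is exactly the paper's point, and no gauge-fixing is needed there.
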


  \begin{rem}\label{A6}
For compact complex surface $(X,J)$, Buchdahl \cite[Lemma 10]{Bu} proved that if $b^1(X)$ is even (that is, $h_J^-=b^+-1$) and if
$\psi\in\wedge_J^{1,1}\otimes L^2(M)$ is $d$-exact, then $\psi=\overline{\partial}_J\partial_J f$ for some $f\in L_2^2(X)$.
In \cite[Appendix A]{TaWaZhZh}, one can solve $\widetilde{(\mathcal{W}},d_J^-)$)-problem on compact $J$-pseudoconvex domain.
 \end{rem}
{\bf Proof of Lemma \ref{65}.} It is easy to see that $C^{\infty}(M)_0$ is a dense subset of $L_2^2(M)_0$, and $\Omega_{\mathbb{R}}^1(M)$ is a
dense subset $\wedge_{\mathbb{R}}^{1}\otimes L_1^2(M)$. For any $A\in\wedge_{\mathbb{R}}^{1}\otimes L_1^2(M)$ satisfying $d_J^-A=0$. Thus,
 there exists a sequence $\{A_i'\}\subset \Omega_{\mathbb{R}}^{1}(M)$ satisfying
$A_i'\rightarrow A$ in $L_1^2$ space. Let $d_J^-A_i=\sigma_i\in\Omega_J^-(M)$, thus, $\sigma_i\rightarrow 0$ in $L^2$ as $i\rightarrow\infty$.
By Lemma \ref{2130}, there exists $\eta_i\in\Omega_J^{0,1}(M)$ such that
$$\sigma_i+d_J^-d^*(\eta_i+\bar{\eta}_i)=0.$$
Hence, let $$A_i:=A_i'+d^*(\eta_i+\bar{\eta}_i),$$ then $d_J^-A_i=0$ and
$A_i\rightarrow A$ in $L_1^2$ space as $i\rightarrow\infty$.
Therefore, $\ker d_J^-\cap\Omega_{\mathbb{R}}^{1}(M)$ is dense subset of $\ker d_J^-$.

Suppose that $F$ is weakly $\widetilde{\mathcal{D}}_J^+$-closed. There exists a $J$-taming symplectic form $\omega_1=F+d_J^-(v+\bar{v})$, where $v\in\Omega_J^{0,1}(M)$. Without loss of generality, assume that
$$\int_MF^2=2,$$
by Lemma \ref{A04}, we may assume that $A\in \Omega_{\mathbb{R}}^1(M)$ satisfies $d_J^-A=0$.
Then
$$\widetilde{\mathcal{W}}^*(A)=\frac{2d_J^+A\wedge F}{F^2}=\wedge d_J^+A.$$
It follows that
\begin{align*}
&\int_M\widetilde{\mathcal{W}}^*(A)d vol_g=\int_Md_J^+A\wedge F\\
&=\int_MdA\wedge F=\int_MdA\wedge \omega_1=0.
\end{align*}
Hence
$\widetilde{\mathcal{W}}^*(A)\in L^2(M)_0$. Note that
\begin{align*}
0<\|\widetilde{\mathcal{W}}^*(A)\|_{L^2}^2=\int_M(\wedge d_J^+A)^2dvol_g
\leq C \ \| \wedge d_J^+A\|_{L^2}^2< \infty.
\end{align*}
Hence, if $d_J^-A=0$, for any $f\in L^2(M)_0$, then
\begin{align*}
(f,\widetilde{\mathcal{W}}^*(A))_{L^2}\leq \|f\|_{L^2}\cdot\|\wedge d_J^+A\|_{L^2}.
\end{align*}
Hence,
\begin{equation*}
  \|\widetilde{\mathcal{W}}^*(A)\|_{L^2} = \mathop{\sup_{h \in L^2(M)_0}}_{\|h\|_{L^2}} (h,\widetilde{\mathcal{W}}^*(A))_{L^2} \leq \|\wedge d_J^+A\|_{L^2}
\end{equation*}

By Lemma \ref{A01}-\ref{A03}, it follows that there exists a unique  $f_A\in C^{\infty}(M)_0$
such that $\widetilde{\mathcal{W}}(f_A)=A$. Hence, we solve $(\widetilde{\mathcal{W}},d_J^-)$-problem.

If $F$ is weakly ${\mathcal{D}}_J^+$-closed, then for any  $A\in\Omega_{\mathbb{R}}^{1}$ satisfying $d_J^-A=0$
we have
$${\mathcal{W}}^*(A)=\frac{2d_J^+A\wedge F+A\wedge dF}{F^2}.$$
Hence
\begin{align*}
&\int_M{\mathcal{W}}^*(A)d vol_g=\int_Md_J^+A\wedge F+A\wedge dF\\
&=\int_Md(A\wedge F)=0,
\end{align*}
${\mathcal{W}}^*(A)\in L^2(M)_0$.

For any $A\in\Omega_{\mathbb{R}}^{1}(M)$, by gauge theory \cite{DoKr}, there exists a unique $\widetilde{A}\in\Omega_{\mathbb{R}}^{1}(M)$
such that $d^*\widetilde{A}=0$ and $d\widetilde{A}=dA$. It is easy to see that
\begin{align*}
\|\widetilde{A}\|_{L^2}^2\leq C\|d\widetilde{A}\|_{L^2}^2\leq C(\|d_J^+A\|_{L^2}^2+\|d_J^-\widetilde{A}\|_{L^2}^2),
\end{align*}
where $C$ is a constant. Hence, for any $A\in\Omega_{\mathbb{R}}^{1}(M)$ satisfying $d_J^-A=0$, then
 there exists a unique $\widetilde{A}\in\Omega_{\mathbb{R}}^{1}(M)$ such that $d^*\widetilde{A}=0$, $d\widetilde{A}=dA$, and
 $${\mathcal{W}}^*(\widetilde{A})=\frac{2d_J^+\widetilde{A}\wedge F+\widetilde{A}\wedge dF}{F^2},$$
${\mathcal{W}}^*(\widetilde{A})\in C^\infty(M)_0$, for any $f\in C^{\infty}(M)_0$.
\begin{align*}
(f,\mathcal{W}^*\widetilde{A})_{L^2}\leq \|f\|_{L^2}\cdot \|\mathcal{W}^*\widetilde{A}\|_{L^2}\\
\leq C\|f\|_{L^2}\cdot \|d\widetilde{A}\|_{L^2}=C\|f\|_{L^2}\cdot \|d_J^+A\|_{L^2}.
\end{align*}
Similarly, by Lemma \ref{A01}-\ref{A03}, there exists a unique $f_A\in C^\infty(M)_0$ such that
$$\mathcal{W}(f_A)=\widetilde{A},$$
$$d_J^-\mathcal{W}(f_A)=d_J^-A=0$$
and
$$d\mathcal{W}(f_A)=d_J^+A\in \Omega_{J}^{+}(M).$$
Thus, we can solve $(\mathcal{W},d_J^-)$-problem.

 This completes the proof Lemma \ref{65}. \medskip

Finally, we give a characterization of $(\Lambda^{1,1}_J\otimes L^2(M))^0_{{w}}$ or $(\Lambda^{1,1}_J\otimes L^2(M))^0_{{\widetilde{w}}}$ by the following lemma:
 \begin{lem}(cf. \cite[Lemma 3.11]{TaWaZhZh})\label{67}
Suppose that  $(M,g,J,F)$ is a closed almost Hermitian $4$-manifold.  If $F$ is weakly ${\mathcal{D}}_J^+$ (or $\widetilde{\mathcal{D}}_J^+$)-closed
with $h^-_J=b^+$ (or $h^-_J=b^+-1$), then
  $$(\Lambda^{1,1}_J\otimes L^2(M))^0_{{w}}=(\Lambda^{1,1}_J\otimes L^2(M))_{{w}}$$
 $$( or\ \ (\Lambda^{1,1}_J\otimes L^2(M))^0_{\widetilde{w}}=(\Lambda^{1,1}_J\otimes L^2(M))_{\widetilde{w}}).$$
 \end{lem}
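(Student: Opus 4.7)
The plan is to prove the nontrivial inclusion $(\Lambda^{1,1}_J\otimes L^2(M))_{w}\subseteq(\Lambda^{1,1}_J\otimes L^2(M))^0_{w}$, and its $\widetilde{w}$-analogue; the reverse inclusion is immediate from Definition \ref{205}. Given a weakly $\mathcal{D}_J^+$- (resp.\ $\widetilde{\mathcal{D}}_J^+$-) closed form $\chi$, I will produce a single scalar $c\in\mathbb{R}$ so that $\psi:=\chi-cF$ satisfies the required orthogonality condition. Because $F$ is itself weakly closed by hypothesis, $\psi$ inherits weak closedness automatically, so everything reduces to solving one linear equation in the single parameter $c$.

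The key linear-algebraic fact is a dimension count for $H_g^+\cap H_J^+$. From the inclusion $H_g^-\subseteq H_J^+$ recorded in Section~2 one gets the direct sum decomposition $H_J^+=(H_g^+\cap H_J^+)\oplus H_g^-$, whence
\[\dim(H_g^+\cap H_J^+)=h_J^+-b^-=b^+-h_J^-.\]

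In the $\mathcal{D}_J^+$ case with $h_J^-=b^+$, this dimension is $0$, so the first orthogonality in the definition of $(\Lambda^{1,1}_J\otimes L^2(M))^0_{w}$ is vacuous and only $P_g^+(\psi)\perp F$ remains. Since $F\in\Omega_g^+$ and $\Omega_g^+$ is $L^2$-orthogonal to $\Omega_g^-$ under the wedge-integration pairing, this reduces to $\int_M\psi\wedge F=0$. Normalizing $\int_M F^2=2$, the choice $c=\tfrac12\int_M\chi\wedge F$ does the job. In the $\widetilde{\mathcal{D}}_J^+$ case with $h_J^-=b^+-1$, the space $H_g^+\cap H_J^+$ is one dimensional, and the taming form $\omega=F+d_J^-(v+\bar v)$ itself provides an explicit generator. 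Indeed $\omega$ is $d$-closed, and since both $F$ and $d_J^-(v+\bar v)\in\Omega_J^-\subseteq\Omega_g^+$ are $g$-self-dual, so is $\omega$; hence $d^*\omega=-*_g d\,*_g\omega=-*_g d\omega=0$ and $\omega$ is $g$-harmonic. Moreover $[\omega]=[\widetilde{\omega}]\in H_J^+$ because $\widetilde{\omega}=\omega-d(v+\bar v)=F-d_J^+(v+\bar v)$ is a $d$-closed $J$-$(1,1)$ form, and $[\omega]^2=2(1+a)>0$ forces $[\omega]\neq 0$. Thus $[\omega]$ spans the one-dimensional $H_g^+\cap H_J^+$ with harmonic representative $\omega$ itself. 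Finally, since $F\wedge d_J^-(v+\bar v)=0$ pointwise (the wedge of a $J$-$(1,1)$ form with a $J$-$(2,0)+(0,2)$ form vanishes by type on a $4$-manifold), $\int_M F\wedge\omega=\int_M F^2=2\neq 0$, so $c:=\tfrac12\int_M\chi\wedge\omega$ yields $\int_M\psi\wedge\omega=0$, which is exactly $P_g^+(\psi)\perp(H_g^+\cap H_J^+)$ once one notes that both $P_g^+$ and $\omega$'s self-duality collapse $\int_M P_g^+(\psi)\wedge\omega$ to $\int_M\psi\wedge\omega$.

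The main obstacle, and the observation that makes the argument collapse to a one-line computation, is the recognition that the taming symplectic form $\omega$ is simultaneously $d$-closed, $g$-harmonic, and a representative of a non-zero class in $H_g^+\cap H_J^+$ — providing a canonical explicit generator against which $F$ pairs nontrivially. Without this, one would be stuck trying to show in the abstract that some harmonic generator of the one-dimensional space $H_g^+\cap H_J^+$ pairs nontrivially with a form $F$ that is not generally $d$-closed, which is not at all transparent.
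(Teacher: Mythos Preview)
Your proof is correct and follows the same approach as the paper: both reduce the equality to the dimension count $\dim(H_g^+\cap H_J^+)=b^+-h_J^-$, which is $0$ in the $\mathcal{D}_J^+$ case and $1$ in the $\widetilde{\mathcal{D}}_J^+$ case, after which a single scalar $c$ suffices. The paper's proof simply asserts the dimension and says ``it is easy to see''; you have supplied the details it omits, in particular the explicit identification of the harmonic self-dual generator $\omega$ of the one-dimensional $H_g^+\cap H_J^+$ and the verification that $\int_M F\wedge\omega=\int_M F^2\neq 0$ via the type argument $F\wedge d_J^-(v+\bar v)=0$.
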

\bp
If $F$ is weakly ${\mathcal{D}}_J^+$-closed and $h^-_J=b^+$, then it is easy to see that
$$\dim (H_g^+\cap H_J^+)=0$$
and
 $$(\Lambda^{1,1}_J\otimes L^2(M))^0_{{w}}=(\Lambda^{1,1}_J\otimes L^2(M))_{{w}}.$$
If $F$ is weakly $\widetilde{\mathcal{D}}_J^+$-closed and $\omega_1=F+d_J^-(v+\bar{v})$, where $v\in\Omega_J^{0,1}(M)$
then
$$\dim (H_g^+\cap H_J^+)=1.$$
It is easy to see that
 $$ (\Lambda^{1,1}_J\otimes L^2(M))^0_{\widetilde{w}}=(\Lambda^{1,1}_J\otimes L^2(M))_{\widetilde{w}}.$$
\ep

 \vspace{3mm}\par\noindent {\bf{Acknowledgements.}}
The first author would like to thank Professor Ping Li, Professor Fangyang Zheng and Professor Nichola Buchdahl for useful comments and suggestions. The second
author would like to thank Professor Fan Ding and Professor Zhi L\"{u} for their support and guidance.

\textsc{Hongyu Wang}, \\
School of Mathematical Sciences,\\
Yangzhou University,\\
 Yangzhou,  225002,\\
 Jiangsu, People's Republic of China\\
 e-mail: \verb"hywang@yzu.edu.cn"\vspace{3mm}

 \textsc{Ken Wang}, \\
School of Mathematical Sciences,\\
Fudan University,\\
Shanghai,  100433,\\
 People's Republic of China\\
 e-mail: \verb"kanwang22@m.fudan.edu.cn"\vspace{3mm}

\textsc{Peng Zhu}, \\
School of Mathematics and Physics,\\
 Jiangsu University of Technology,\\
 Changzhou,  213001,\\
 Jiangsu, People's Republic of China\\
 e-mail: \verb"zhupeng@jsut.edu.cn" 

\begin{thebibliography}{99}
 \bibitem{Au}
 T. Aubin, {\it Some nonlinear problems in Riemannian geometry }, Springer-Verlag, Berlin, 1998.

 \bibitem{BaPeVa}
 W. Barth, C. Peters and A. Van de Ven, {\it Compact Complex Surfaces}, Springer-Verlag, Berlin, 1984.

\bibitem{Bu}
 N. Buchdahl, {\it On compact K\"{a}hler surfaces,} Ann. Inst. Fourier, {\bf 49}(1) (1999), 287-302.

 \bibitem{Bu1}
 N. Buchdahl, {\it A Nakai-Moishezon criterion for non-K\"{a}hler surfaces,} Ann. Inst. Fourier, {\bf 50}(5) (2000), 1533-1538.

\bibitem{Bu3}
 N. Buchdahl, {\it  Compact K\"{a}hler surfaces with trivial canonical bundle,} Ann. Global Anal. Geom., {\bf 23} (2003), 189-204.

 \bibitem{Bu4}
 N. Buchdahl, {\it  Private Communications,} 2023.

 \bibitem{De}
 J.-P. Demailly, {\it Regularization of closed positive currents of type (1,1) by the flow of a Chern connection}, in: Contributions to complex analysis and analytic geometry: dedicated to Pierre Dolbeault, ed. H. Skoda and J. M. Tr\'{e}preau, Wiesbaden, Vieweg, 1994.

 \bibitem{DePa}
 J.-P. Demailly and M. Paun, {\it Numerical characterizaition of the K\"{a}hler cone of a compact K\"{a}hler manifold,} Ann. of Math., {\bf 159} (2004), 1247-1274.

\bibitem{Do}
S.K. Donaldson,
{\it Two forms on four manifolds and elliptic equations,}
 Nankai Tracts Math. {\bf 11}, Inspired by S. S. Chern,  World Sci. Publ., Hackensack, N.J. 153--172  (2006).

\bibitem{DoKr}
 S. K. Donaldson and P. B. Kronheimer,
{\it The Geometry of Four-Manifolds}, Oxford Mathematical Monographs, Oxford Science Publications, New York, 1990.

 \bibitem{DLZ1}
 T. Draghici, T.-J. Li and W. Zhang, {\it Symplectic forms and cohomology decomoposition of almost complex four-manifolds}, Int. Math. Res. Not., (2010), no. 1, 1-17.

 \bibitem{DLZ2}
 T. Draghici, T.-J. Li and W. Zhang, {\it On the J-anti-invariant cohomology of almost complex 4-manifolds},
          Quart. J. Math., {\bf 64} (2013), 83-111.

 \bibitem{ElGr}
 Y. Eliashberg and M. Gromov, {\it Convex symplectic manifolds}, Proc. of Symp. in Pure Math., {\bf 52}, Part 2,
 Several Complex Variables and Complex Geometry, 135-162, Amer. Math. Soc., Providence, RI, 1991.

\bibitem{El}
F. Elkhadhra, {\it J-pluripolar subsets and currents on almost complex manifolds}, Math. Z., {\bf 264} (2010), 399-422.

\bibitem{El1}
F. Elkhadhra, {\it Poincar\'{e}-Lelong formula, J-analytic subsets and Lelong numbers of currents on almost complex manifolds},
   Bull. Sci. Math. {\bf 138} (2014), 393-405.

\bibitem{Ga}
 P. Gauduchon, {\it Le th\'{e}or\`{e}me de l'excetricit$\acute{e}$ nulle}, C. R. Acad. Sci. Paris. S\'{e}rie A, {\bf 285}
 (1977), 387-390.

\bibitem{GiTr}
D. Gilbarg and N. Trudinger, {\it Elliptic partial differential equations of second order},
 2nd ed., Berlin-Heidelberg-New York, Springer, 1983.

  \bibitem{GrHa}
  P. A. Griffiths and J. Harries, {\it Principles of Algebraic Geometry}, New York, Wiley, 1978.

   \bibitem{Gr}
    M. Gromov, {\it Pseudoholomorphic curves in symplectic manifolds}, Invent. Math., {\bf 82} (1985), 307-347.

   \bibitem{HaLa0}
    R. Harvey and H. B. Lawson, Jr., {\it An intrinsic characterization of K\"{a}hler manifolds}, Invent. Math., {\bf 74} (1983), 169-198.

 \bibitem{HaLa}
 R. Harvey and H. B. Lawson, Jr., {\it Potential theory on almost complex manifolds}, Ann. Inst. Fourier, {\bf 65} (2015), 171-210.

\bibitem{Ho0}
 L. H\"{o}rmander,
 {\it $L^2$  estimates and existence theorems for the $\bar{\partial}$ operator},
           Acta Math., {\bf 113} (1965), 89-152.

  \bibitem{Ho}
   L. H\"{o}rmander, {\it An introduction to complex analysis in several variables}, third edition (revised),
     D. Van Nostrand Co., Inc., Princeton, N.J.-Toronto, Ont.-London, 1990.

\bibitem{La}
A. Lamari, {\it Le cone K\"{a}hlerien d'une surface,} J. Math. Pures Appl., {\bf 78} (1999), 249-263.

 \bibitem{La1}
 A. Lamari,
 {\it Courants kaehleriens et surfaces compactes}, Ann. Inst. Fourier, {\bf 49} (1999), 263-285.

\bibitem{Le} M. Lejmi, {\it Strictly nearly K\"{a}hler 6-manifolds are not compatible with symplectic forms}, C. R. Math. Acad. Sci. Paris, {\bf 34} (2006), 759-762.

\bibitem{Le0}
 M. Lejmi, {\it Stability under deformations of extremal almost-K\"{a}hler metrics in dimension 4}, Math. Res. Lett., {\bf 17} (2010), 601-612.

\bibitem{Le1}
M. Lejmi, {\it Extremal almost-K\"{a}hler metrics}, International J. Math., {\bf 21} (2010), 1639-1662.

\bibitem{LiLi}
 T.J. Li and A. K. Liu, {\it Symplectic structures on ruled surfaces and a generalized adjunction formula}, Math. Res. Lett., {\bf 2} (1995), 453-471.

 \bibitem{LiZh}
  T.-J. Li and W. Zhang, {\it Comparing tamed and compatible symplectic cones and cohomological properties of almost complex manifolds}, Comm. Anal. Geom., {\bf 17} (2009), 651-684.

\bibitem{LiZh1}
  T.-J. Li and W. Zhang, {\it Almost K\"{a}hler forms on rational 4-manifolds}, Amer. J. Math., {\bf 137} (2015), 1209-1256.

  \bibitem{LiZh2}
  T.-J. Li and W. Zhang, {\it J-holomorphic curves in a nef class}, IMRN, {\bf 2015} (2015), 12070-12104.

\bibitem{McSa0}
 D. McDuff and D. Salamon, {\it J-holomorphic curves and symplectic topology}, Amer. Math. Soc., Providence, 2004.


\bibitem{McSa}
 D. McDuff and D. Salamon, {\it Introduction to symplectic topology}, Oxford University Press, Third Editon, 2017.

\bibitem{MoKo}
J. Morrow and K. Kodaira, {\it complex manifolds}, Holt. Rinehart and Winston, New York, 1971.

  \bibitem{Pa}
  N. Pali, {\it Fonctions plurisousharmoniques et courants positifs de type $(1,1)$ sur une vari\'{e}t\'{e} complex},
  Manuscripta Math., {\bf 118} (2005), 311-337.

\bibitem{Sik} J. C. Sikorav, {\it Some properties of holomorphic curves in almost complex manifolds}, Holomorphic curves in symplectic geometry, 165-189, Progr. Math., {\bf 117},  Birkh\"{a}user, Basel, 1994.

\bibitem{Si}
Y.-T. Siu, {\it Analyticity of sets associated to Lelong numbers and the extension of closed positive currents}, Invent. Math., {\bf 27} (1974),
  53-156.

\bibitem{Su}
 D. Sullivan, {\it Cycles for the dynamical study of foliated manifolds and complex manifolds}, Invent. Math., {\bf 36} (1976), 225-255.

 \bibitem{TWZZ}
  Q. Tan, H. Y. Wang, Y. Zhang and P. Zhu, {\it On cohomology of almost complex 4-manifolds}, J. Geom. Anal., {\bf 25} (2015), 1431-1443.

 \bibitem{TaWaZh0}
   Q. Tan, H. Y. Wang and J. R. Zhou, {\it Primitive cohomology of almost real degree two on compact symplectic manifolds},
  Manuscripta Math., {\bf 148} (2015), 535-556.

 \bibitem{TaWaZh}
   Q. Tan, H. Y. Wang and J. R. Zhou, {\it A note on the deformations of almost complex structures on closed four-manifolds},
   J. Geom. Anal., {\bf 27} (2017), 2700-2724.

\bibitem{TaWaZhZh}
Q. Tan, H.Y. Wang, J.R. Zhou and P. Zhu, {\it On tamed almost complex four manifolds,} Peking Mathematical Journal, {\bf 5(1)} (2022), 37-152.

\bibitem{Ta0}
C. H. Taubes, {\it Differential Geommetry: Bundles, Connections, Metrics and Curvatures}, Oxford Graduate Texts in Mathematics 23, 2011.

\bibitem{Ta}
C. H. Taubes, {\it Tamed to compatible: symplectic forms via moduli space integration}, J. Symplectic Geom., {\bf 9(2)} (2011), 161-250.
\bibitem{Yo}
 K. Yosida, {\it Functional Analysis}, Sixth Edition, Springer-Verleg, 1980.

\bibitem{Zh}
 W. Zhang, {\it The curve cone of almost complex 4-manifolds}, Proc. London Math. Soc., {\bf 115(6)} (2017), 1227-1275.

\bibitem{Zhe}
 F. Zheng, {\it Private communications}, 2023.
\end{thebibliography}
\end{document}